\title{Automorphism groups and linearizability of rational Fano conic bundle threefolds}
\author{Shuto Abe\blfootnote{\leftline{{\em 2020 Mathematics Subject Classification.} 14J10, 14J45, 14J50, 14L30, 14M20}\protect\linebreak {\em Key words and phrases.} equivariant birational geometry, Fano threefolds, automorphism groups}}
\date{}
 \def\@evenhead{\hfil\thepage}%
 \def\@oddhead{\hfil\thepage}%
 \def\@evenfoot{\@empty}%
 \def\@oddfoot{\@empty}%
\theoremstyle{definition}
\newtheorem{dfn}{Definition}[section]
\newtheorem{thm}[dfn]{Theorem}
\newtheorem{prop}[dfn]{Proposition}
\newtheorem{cor}[dfn]{Corollary}
\newtheorem{lem}[dfn]{Lemma}
\newtheorem*{property}{Property A}
\newtheorem{example}[dfn]{Example}
\newcommand{\Q}{\mathbb{Q}}
\newcommand{\Z}{\mathbb{Z}}
\newcommand{\C}{\mathbb{C}}
\newcommand{\Line}{\mathbb{P}}
\newcommand{\bD}{\begin{dfn}}
\newcommand{\eD}{\end{dfn}}
\newcommand{\bT}{\begin{thm}}
\newcommand{\eT}{\end{thm}}
\renewcommand{\>}{\rangle}
\DeclareMathOperator{\ShO}{\mathscr{O}}
\DeclareMathOperator{\Cl}{Cl}
\DeclareMathOperator{\GL}{GL}
\DeclareMathOperator{\Pic}{Pic}
\DeclareMathOperator{\id}{id}
\DeclareMathOperator{\Aut}{Aut}
\DeclareMathOperator{\Bir}{Bir}
\DeclareMathOperator{\Cr}{Cr}
\DeclareMathOperator{\NS}{NS}
\DeclareMathOperator{\CH}{CH}
\DeclareMathOperator{\Ab}{Ab}
\DeclareMathOperator{\AJ}{AJ}
\DeclareMathOperator{\IIJ}{IJ}
\DeclareMathOperator{\JJ}{J}
\DeclareMathOperator{\Prym}{\mathbf{Prym}}
\DeclareMathOperator{\PPrym}{\mathbf{PPrym}}
\DeclareMathOperator{\Nm}{Nm}
\DeclareMathOperator{\CCH}{\mathbf{CH}}
\DeclareMathOperator{\PPic}{\mathbf{Pic}}
\DeclareMathOperator{\inj}{\hookrightarrow}
\DeclareMathOperator{\Sym}{Sym}
\DeclareMathOperator{\calF}{\mathcal{F}}
\DeclareMathOperator{\calM}{\mathcal{M}}
\DeclareMathOperator{\diag}{diag}
\DeclareMathOperator{\SL}{SL}
\DeclareMathOperator{\ch}{ch}
\DeclareMathOperator{\Sing}{Sing}
\DeclareMathOperator{\NE}{NE}
\newcommand{\Gcong}{\overset{G}{\cong}}
\newcommand\blfootnote[1]{%
  \begingroup
  \renewcommand\thefootnote{}\footnote{#1}%
  \addtocounter{footnote}{-1}%
  \endgroup
}
\begin{document}
\maketitle

\

\centerline{{\it To Professor Yuri Prokhorov on the occasion of his 60th birthday.}}

\abstract{We generalize the equivariant intermediate Jacobian torsor obstruction over $\C$, given in \cite{1}, to algebraically closed fields of characteristic zero. It is an obstruction to the (projective) linearizability problem of finite group actions on threefolds. In addition, we calculate automorphism groups of general smooth Fano threefolds of \textnumero 2.18. As an application, we prove that a general smooth Fano threefold $X$ of \textnumero 2.18 is $\Aut(X)$-linearizable.}
\section{Introduction}
Let $k$ be an algebraically closed field. We consider two problems. The first is the (projective) linearizability problem. 
\begin{dfn}(cf. \cite[Section 2]{HT22})
Let $X$ be an $n$-dimensional rational variety with a regular action of a finite group $G$. 

\begin{itemize}
\item $X$ is projectively linearizable if there exists a regular $G$-action on $\Line^n$ and a $G$-equivariant birational map
\[X\underset{G}{\overset{\sim}{\dashrightarrow}} \Line^n.\]
\item $X$ is linearizable if there exists an $n+1$-dimensional linear representation $V$ of $G$ such that $X$ and $\Line(V)$ are $G$-equivariantly birational to each other. 
\end{itemize}
\end{dfn}
 
The birational automorphism group $\Bir(\Line^n_k)$ of the $n$-dimensional projective space $\Line^n$ over $k$ is called the Cremona group of degree $n$ and is denoted by $\Cr_n(k)$.
For a finite subgroup $G$ of $\Cr_n(k)$, there exists an $n$-dimensional smooth projective rational variety $X$ with a regular and faithful $G$-action such that $\Line^n$ and $X$ are $G$-birationally equivalent, where the $G$-action on $\Line^n$ is a birational action (see \cite{dFE02}). Conversely, given an $n$-dimensional (smooth projective) rational variety with a regular and faithful $G$-action, we obtain (a conjugacy class of) a subgroup which is isomorphic to $G$ of $\Cr_n(k)$. Thus, we have a natural bijection between conjugacy classes of finite subgroups $G\subset \Cr_n(k)$ and $G$-birational equivalence classes of $n$-dimensional (smooth projective) rational varieties with regular and faithful $G$-actions. Hence, the (projective) linearizability is a property for conjugacy classes of finite subgroups $G\subset \Cr_n(k)$. I. V. Dolgachev and V. A. Iskovskikh gave the classification of conjugacy classes of finite subgroups $G\subset \Cr_2(\C)$ (\cite{DI09}). A. Pinardin, A. Sarikyan, and E. Yasinsky determined projectively linearizable conjugacy classes of finite subgroups $G\subset \Cr_2(\C)$ (\cite{PSY24}). Even the classification of involutions of $\Cr_3(\C)$ is an open problem (see \cite{Pro13}). There are some studies of (projective) linearizability of rational smooth threefolds (e.g. \cite{1} and \cite{TYZ23}). Related works on equivariant birational geometry are studies of equivariant unirationality and equivariant stable rationality of (possibly singular) Fano threefolds (e.g. \cite{CMTZ24}, \cite{CTZ25a}, \cite{CTZ25b}, and \cite{CTZ25c}).

T. Ciurca, S. Tanimoto, and Y. Tschinkel gave an obstruction of (projective) linearizability of threefolds over $\C$, called the equivariant IJT-obstruction (\cite{1}). In addition, \cite{1} gave criteria for actions of cyclic groups on conic bundles $X\to\Line^2$ with smooth quartic curve and quadric surface bundles $X\to\Line^1$, where these conic bundles are smooth Fano threefolds of \textnumero 2.18 in the Mori-Mukai classification in \cite{IP}. 

We generalize the equivariant IJT-obstruction and criteria for actions of cyclic groups on smooth Fano threefolds of \textnumero 2.18 (Section \ref{section 2}, \ref{section 3}, and \ref{section 5}), given in \cite{1}, to algebraically closed fields of characteristic zero. The equivariant IJT-obstruction is the following theorem:
\begin{thm}(Theorem \ref{IJT})\label{intro IJT}
Let $X$ be a smooth projective rational threefold over an algebraically closed field $k$ of characteristic zero with a regular and projectively linearizable action of a finite group $G$. 

Then there exists a smooth projective curve $C$ with a regular $G$-action such that, for any $G$-invariant connected component $M\subset \CCH^2_{X/k}$, we have a $G$-invariant connected component $N\subset \PPic_{C/k}$ and a $G$-equivariant isomorphism 
\[M\Gcong N. \]
\end{thm}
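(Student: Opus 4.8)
The plan is to adapt the proof of the corresponding statement over $\C$ in \cite{1}, replacing the analytic intermediate Jacobian throughout by the algebraic representative $\CCH^2_{X/k}$, whose construction, $G$-equivariance over $k$, principal polarization on its abelian variety part $J$, and behaviour under blow-ups are furnished by Sections \ref{section 2} and \ref{section 3}.

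First I would fix a $G$-equivariant birational map $X\tto\Line^3$, available since the action is projectively linearizable, and apply $G$-equivariant weak factorization --- valid over any algebraically closed field of characteristic zero --- to write it as a chain of $G$-equivariant blow-ups and blow-downs along smooth $G$-invariant centers. In a threefold these centers are points and smooth (possibly reducible, $G$-permuted) curves. Next I would record the equivariant blow-up formula: if $\widetilde Z\to Z$ is the blow-up of a smooth projective $G$-threefold along a smooth $G$-invariant center $W$, then there is a $G$-equivariant isomorphism $\CCH^2_{\widetilde Z/k}\cong\CCH^2_{Z/k}\times\PPic_{W/k}$ when $W$ is a curve, respecting the induced principal polarizations on abelian-variety parts, and $\CCH^2_{\widetilde Z/k}\cong\CCH^2_{Z/k}\times\Z$ when $W$ is a point; here $\PPic_{W/k}=\prod_{o}\PPic_{W_o/k}$ runs over the $G$-orbits $W_o$ of connected components of $W$, with its natural $G$-action. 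This is the equivariant shadow of the projective-bundle and blow-up formulas for Chow groups.

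Running this along the chain and using the triviality $\CCH^2_{\Line^3/k}\cong\Z$, I would obtain smooth projective curves $C_1,C_2$ with regular $G$-actions and a $G$-equivariant isomorphism of schemes $\CCH^2_{X/k}\times\PPic_{C_1/k}\cong\PPic_{C_2/k}$ compatible with the principal polarizations on abelian-variety parts (the finitely many $\Z$-factors from blown-up points only affect component groups and are harmless). It then remains to cancel $\PPic_{C_1/k}$: on abelian-variety parts this reads $J\times\PPic^0_{C_1/k}\cong\PPic^0_{C_2/k}$ as principally polarized abelian varieties with $G$-action, and an equivariant refinement of the Clemens--Griffiths theorem --- equivariant Poincar\'e reducibility using the $G$-invariant polarizations, uniqueness of decomposition into indecomposable $G$-factors, and the fact that such a factor sitting inside the Picard scheme of a $G$-curve is again $\PPic^0$ of a $G$-curve --- identifies $J$, polarization-compatibly and $G$-equivariantly, with $\PPic^0_{C/k}$ for a smooth projective $G$-curve $C$ (one may take $C$ to be a union of $G$-orbits of connected components of $C_2$). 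Finally, transporting a $G$-invariant connected component $M\subset\CCH^2_{X/k}$, a torsor under $J$, through the stable isomorphism and the identification $J\cong\PPic^0_{C/k}$, singles out a $G$-invariant connected component $N\subset\PPic_{C/k}$ and a $G$-equivariant isomorphism $M\Gcong N$.

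The hard part will be this last cancellation step: one must make the decomposition genuinely $G$-equivariant, control the induced polarizations so that the extracted abelian-variety factor stays principally polarized, ensure it is realized as the Picard scheme of an honest smooth projective $G$-curve rather than merely a principally polarized abelian variety with $G$-action, and match up the component-group data $G$-equivariantly. The remaining steps are a careful but essentially mechanical transcription of the arguments in \cite{1} into the algebraic-representative language set up earlier in the paper.
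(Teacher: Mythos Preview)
Your proposal is correct and follows essentially the same approach as the paper: equivariant weak factorization, the equivariant blow-up formula (Lemma \ref{before blowup}), assembling a stable isomorphism $(\CCH^2_{X/k})^{\gamma}\times\PPic^n_{C/k}\Gcong\PPic^m_{D/k}$ (Theorem \ref{important lemma}), and then cancelling via unique decomposition of $G$-ppavs (Corollary \ref{decomposition of ppav}) together with the Torelli theorem to realize the complement as $\PPic$ of an honest $G$-subcurve $D'\subset D$. The ``hard part'' you flag is handled exactly by these two ingredients; the paper tracks components and their translations to the identity component a bit more explicitly than your group-scheme-level phrasing, but the content is the same.
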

Here $\PPic_{C/k}$ is a fine moduli space for the relative Picard functor $\Pic_{C/k}$. The above theorem is a generalization of \cite[Theorem 1.1]{1}. The assumption for $k$ is necessary for the equivariant functorial weak factorization theorem, and this theorem is used in the proof of Theorem \ref{intro IJT}. In Section \ref{section 2}, Section \ref{section 3.2}, and Section \ref{section 3.3}, we generalize the theories of equivariant principally polarized abelian varieties and equivariant Chow group schemes, given in \cite{1}, to algebraically closed fields. Let $X$ be a rationally chain connected threefold over an algebraically closed field $k$. The Chow group scheme $\CCH^2_{X/k}$ is a separated, locally of finite type $k$-group scheme whose $k$-valued points are naturally identified with the Chow group $\CH^2(X)$ of codimension $2$ (see Section \ref{section 3.3}). We prove that a regular action of a finite group $G$ on $X$ induces a regular $G$-action on $\CCH^2_{X/k}$. In Section \ref{section 3.4}, we give a more algebraic proof of \cite[Theorem 1.1]{1}. In addition, if the equivariant functorial weak factorization theorem is proved over positive characteristics, the theorem holds over arbitrary algebraically closed fields.

As mentioned above, we have methods to study linearizability of Fano threefolds of \textnumero 2.18 with actions of (cyclic) groups. Classification of group actions on Fano threefolds of \textnumero 2.18 is a natural problem. In other words, this problem is to compute automorphism groups of smooth Fano threefolds of \textnumero 2.18, which is the second problem considered in this paper. \cite{CTT} first computed some cases in these. 
When the Picard number is $1$, automorphism groups of Fano threefolds with finite automorphism group were studied in \cite{K24}. \cite{Joe24} and \cite{PCS19} described automorphism groups of smooth Fano threefolds of \textnumero 2.21. The question of when the automorphism group of a Fano threefold is finite is solved by \cite{PCS19}. Also, \cite{PCS19} gave the classification of the identity components of automorphism groups of Fano threefolds with infinite automorphism groups.

Let $X$ be a smooth Fano threefold of \textnumero 2.18. Then we have a double cover $X\to \Line^1\times\Line^2$ branched in a smooth $(2,2)$-divisor. The variety $X$ has the following properties: 
\begin{itemize}
\item $X$ is rational.
\item $X\to \Line^1\times\Line^2\to \Line^2$ is a standard conic bundle whose discriminant curve is a plane quartic which has at worst $A_1$-singularities.
\item $X\to \Line^1\times\Line^2\to \Line^1$ is a quadric surface bundle.
\item $\Aut(X)$ is finite.
\end{itemize}
We develop formulas calculating automorphism groups of smooth Fano threefolds of \textnumero 2.18. In Section \ref{section 4.3}, we prove the following proposition:
\begin{prop}(Corollary \ref{upper of aut of 2.18})
Assume that the discriminant curve $\Delta$ is smooth. Then there exists a subgroup $G\subset \Aut(\Delta)$ such that we have an exact sequence of groups 
\[0\to  \Z/2\Z\to \Aut(X)\to  G\to 1.\]
\end{prop}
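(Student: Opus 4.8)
The plan is to read off the exact sequence from the two canonical fibrations on $X$ together with the intrinsic discriminant quartic $\Delta$. Since $X$ has Picard rank two, $\overline{NE}(X)$ has exactly two extremal rays, contracted respectively by the conic bundle $\pi_{2}\colon X\to\PP^{2}$ and the quadric surface bundle $\pi_{1}\colon X\to\PP^{1}$. Every $\varphi\in\Aut(X)$ preserves $\overline{NE}(X)$ and cannot interchange the two rays, their contractions having targets of different dimensions; hence $\pi_{1}$, $\pi_{2}$, and therefore the product morphism $X\to\PP^{1}\times\PP^{2}$ realizing $X$ as the double cover of $\PP^{1}\times\PP^{2}$ branched along the smooth $(2,2)$-divisor $B$, are all $\Aut(X)$-equivariant. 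This gives a homomorphism $\Psi=(\psi_{1},\psi_{2})\colon\Aut(X)\to\Aut(\PP^{1}\times\PP^{2})=\PGL_{2}(k)\times\PGL_{3}(k)$ (no factor interchange, again by dimension) whose image preserves $B$, and whose kernel is the group of deck transformations of the degree two cover $X\to\PP^{1}\times\PP^{2}$; that is, $\ker\Psi=\langle\iota\rangle\cong\Z/2\Z$, where $\iota$ is the Galois involution.

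Next I would pass to $\Delta$. Being the discriminant of $\pi_{2}$, the quartic $\Delta\subset\PP^{2}$ is stabilized by $\psi_{2}(\Aut(X))$, so restriction yields $\overline{\rho}\colon\Aut(X)\to\Aut(\Delta)$; here one uses that $\Delta$, being a smooth plane quartic, is a canonically embedded non-hyperelliptic curve of genus $3$, so that $\Aut(\Delta)$ coincides with the group of linear automorphisms of $\PP^{2}$ stabilizing $\Delta$. Put $G:=\overline{\rho}(\Aut(X))\subset\Aut(\Delta)$. I claim $\ker\overline{\rho}=\ker\psi_{2}$: if $g\in\PGL_{3}(k)$ fixes $\Delta$ pointwise, then its fixed locus, a union of linear subspaces of $\PP^{2}$, contains the irreducible quartic $\Delta$, which is impossible unless $g=\id$; hence $\overline{\rho}(\varphi)=\id$ forces $\psi_{2}(\varphi)=\id$, and the reverse inclusion is trivial. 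So it remains to prove $\ker\psi_{2}=\Aut(X/\PP^{2})\cong\Z/2\Z$.

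This last step is the crux, and it is where smoothness of $\Delta$ is genuinely used. Given $\varphi\in\Aut(X/\PP^{2})$, the automorphism $\varphi$ still preserves $\pi_{1}$ (the extremal ray argument again), so $g:=\psi_{1}(\varphi)\in\PGL_{2}(k)$ is defined and $(g,\id)$ preserves $B$; and if $g=\id$ then $\Psi(\varphi)=\id$, so $\varphi\in\ker\Psi=\langle\iota\rangle$. Thus it suffices to show $g=\id$. Write $B=\{a(x)s^{2}+b(x)st+c(x)t^{2}=0\}$ with $a,b,c\in H^{0}(\PP^{2},\ShO_{\PP^{2}}(2))$ and $[s:t]$ the coordinates on $\PP^{1}$, so that $\Delta=\{b^{2}-4ac=0\}$. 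If $a,b,c$ were linearly dependent over $k$, then $b^{2}-4ac$ would be a product of two conics (possibly equal or further degenerate), hence $\Delta$ would be reducible or non-reduced, in particular singular; so $a,b,c$ are linearly independent in $H^{0}(\PP^{2},\ShO_{\PP^{2}}(2))$. Now $(g,\id)^{*}B=B$ translates, after lifting $g$ to $\tilde g\in\GL_{2}(k)$, into the statement that $\Sym^{2}(\tilde g)$, acting on binary quadratic forms, has the triple $(a,b,c)$ as an eigenvector; by linear independence this eigenvector equation forces $\Sym^{2}(\tilde g)$ to be a scalar matrix, which forces $\tilde g$ itself to be scalar, whence $g=\id$ in $\PGL_{2}(k)$. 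Therefore $\Aut(X/\PP^{2})=\langle\iota\rangle\cong\Z/2\Z$, and combining this with the preceding paragraph yields the exact sequence
\[0\to\Z/2\Z\to\Aut(X)\to G\to1.\]

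The main obstacle is precisely the computation of $\Aut(X/\PP^{2})$: the generic fibre of $\pi_{2}$ is a smooth conic, whose automorphism group over $k(\PP^{2})$ is a form of $\PGL_{2}$, so a priori there could be many fibrewise automorphisms and one must use the geometry to see that none of them extends except $\iota$. Linear independence of $a,b,c$, extracted from the smoothness of $\Delta$, is exactly what rules this out, and it is the one place where the hypothesis is essential; everything else — the equivariance of $X\to\PP^{1}\times\PP^{2}$, the description of $\Aut$ of a smooth plane quartic, and the identification of $\ker\Psi$ — is formal.
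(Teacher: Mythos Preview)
Your proof is correct and reaches the same conclusion as the paper, but the key step is handled differently. Both arguments begin identically: the two extremal contractions make $\pi\colon X\to\PP^1\times\PP^2$ equivariant, yielding the exact sequence $1\to\langle\iota\rangle\to\Aut(X)\to\Aut(\PP^1\times\PP^2;B)\to 1$, and the remaining task is to show that the further map $\Aut(\PP^1\times\PP^2;B)\to\Aut(\Delta)$ is injective. The paper does this via a detour through the del Pezzo surface $B$: it injects $\Aut(\PP^1\times\PP^2;B)$ into $\Aut(B)$, invokes the structure of automorphisms of degree-$2$ del Pezzos to identify any element acting trivially on $\PP^2$ with the Geiser involution $\sigma$, and then proves a separate lifting criterion (Theorem~\ref{lifting theorem}) to the effect that $\sigma$ lifts to $\PP^1\times\PP^2$ only when $Q_1,Q_2,Q_3$ satisfy an explicit linear relation, forcing $\Delta$ reducible. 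You bypass all of this with a single linear-algebra move: $(g,\id)$ preserving $B$ makes $(a,b,c)$ an eigenvector of $\Sym^2(\tilde g)$, and linear independence of $a,b,c$ in $H^0(\PP^2,\ShO(2))$---equivalent to irreducibility of $\Delta$---forces $\Sym^2(\tilde g)$, hence $\tilde g$, to be scalar. Your route is shorter and self-contained, needing neither the del Pezzo description of $\Aut(B)$ nor the explicit analysis of $\sigma$; the paper's route has the compensating advantage that its lifting criterion is reused later to build examples with reducible $\Delta$ in which the Geiser involution \emph{does} lift.
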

By the above proposition, for any smooth Fano threefold $X$ of \textnumero 2.18 with a smooth plane quartic curve, we obtain the inequality $|\Aut(X)|\le336$. In addition, we provide examples showing that both $|\Aut(X)|= 2\times |\Aut(\Delta)|$ and $|\Aut(X)|\neq 2\times |\Aut(\Delta)|$ can occur. In Example \ref{No.2.18 with reducible quartic}, we construct examples of smooth Fano threefolds of \textnumero 2.18 with a reducible plane quartic.

An open locus $M_{(2,2)}^{sm}$ of the GIT-quotient \[M_{(2,2)}:=|\ShO_{\Line^1\times\Line^2}(2,2)|^{ss}/\!\!/\SL_2\times\SL_3\] parametrizes the isomorphism classes of smooth Fano threefolds of \textnumero 2.18 (see Proposition \ref{prop 4.19}).  
K. DeVleming, L. Ji, P. Kennedy-Hunt, and M. H. Quek studied relations between $M_{(2,2)}$ and Fano threefolds of \textnumero 2.18 (\cite{DJK24}). As a general result, we prove that the automorphism group of a smooth Fano threefold corresponding to a general point in $M_{(2,2)}^{sm}$ is the cyclic group of order $2$: 
\begin{thm}(Theorem \ref{aut of general})
Let $X$ be a general smooth Fano threefold of \textnumero 2.18. Then, we have an isomorphism
\[\Aut(X)\cong \Z/2\Z.\]
\end{thm}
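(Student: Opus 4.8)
Since a general smooth plane quartic has trivial automorphism group, the plan is to show that the discriminant quartic of a general smooth Fano threefold of \textnumero 2.18 is a general plane quartic, and then to feed this into Corollary~\ref{upper of aut of 2.18}.

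First I would fix notation: write $X = X_B$ for the double cover of $\mathbb{P}^1\times\mathbb{P}^2$ branched along a $(2,2)$-divisor $B = \{as^2 + bst + ct^2 = 0\}$ with $a,b,c\in H^0(\mathbb{P}^2,\mathcal{O}(2))$, so that the induced conic bundle $X_B\to\mathbb{P}^2$ has discriminant the plane quartic $\Delta_B = \{b^2 - 4ac = 0\}$. Since $\mathcal{O}_{\mathbb{P}^1\times\mathbb{P}^2}(2,2)$ is very ample, a general $B$ is smooth and then $X_B$ is smooth, so the locus $\mathcal{U}_0\subset H^0(\mathbb{P}^1\times\mathbb{P}^2,\mathcal{O}(2,2))$ of such $B$ is dense open. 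By Corollary~\ref{upper of aut of 2.18}, for $B\in\mathcal{U}_0$ with $\Delta_B$ also smooth there is an exact sequence $0\to\Z/2\Z\to\Aut(X_B)\to G\to 1$ with $G\subseteq\Aut(\Delta_B)$ (the $\Z/2\Z$ being generated by the Galois involution of $X_B\to\mathbb{P}^1\times\mathbb{P}^2$), so $\Aut(X_B)\cong\Z/2\Z$ as soon as $\Aut(\Delta_B) = 1$. Now $\{\Delta\in|\mathcal{O}_{\mathbb{P}^2}(4)| : \Delta\text{ smooth},\ \Aut(\Delta) = 1\}$ is a dense open subset, so if the discriminant map $\tilde\delta\colon H^0(\mathbb{P}^2,\mathcal{O}(2))^{\oplus 3}\to H^0(\mathbb{P}^2,\mathcal{O}(4))$, $(a,b,c)\mapsto b^2 - 4ac$, is dominant, then its preimage is dense open, and intersecting it with $\mathcal{U}_0$ yields a dense open set of $B$ for which $X_B$ and $\Delta_B$ are smooth and $\Aut(\Delta_B) = 1$, hence $\Aut(X_B)\cong\Z/2\Z$. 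All of these conditions are $\SL_2\times\SL_3$-invariant, so a general point of $M_{(2,2)}^{sm}$ then satisfies $\Aut(X)\cong\Z/2\Z$, which is the theorem. It thus remains to prove that $\tilde\delta$ is dominant.

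For this I would compute the differential: at a point $(a_0,b_0,c_0)$,
\[\im d\tilde\delta_{(a_0,b_0,c_0)} = a_0\,H^0(\mathcal{O}(2)) + b_0\,H^0(\mathcal{O}(2)) + c_0\,H^0(\mathcal{O}(2)) = \bigl(a_0,b_0,c_0\bigr)_4,\]
the degree-$4$ part of the ideal generated by $a_0,b_0,c_0$ in $\C[x_0,x_1,x_2]$. If $(a_0,b_0,c_0)$ is three general conics — equivalently they have no common zero in $\mathbb{P}^2$, equivalently they form a regular sequence — then $\C[x_0,x_1,x_2]/(a_0,b_0,c_0)$ is a complete intersection with Hilbert series $(1-t^2)^3/(1-t)^3 = (1+t)^3$, which vanishes in degree $4$; therefore $(a_0,b_0,c_0)_4 = H^0(\mathcal{O}(4))$, the differential $d\tilde\delta_{(a_0,b_0,c_0)}$ is surjective, and $\tilde\delta$ is a submersion there between smooth affine varieties, hence dominant. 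This completes the argument.

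The substantive point is the dominance of $\tilde\delta$, and within it the identification of the image of its differential with the degree-$4$ part of the ideal generated by $a_0,b_0,c_0$ together with the fact that three general plane conics form a regular sequence (so that ideal is everything in degree $4$); the remaining ingredients — the openness of the relevant loci, the classical triviality of the automorphism group of a general plane quartic, and the passage from $H^0(\mathcal{O}(2,2))$ to $M_{(2,2)}^{sm}$ — are routine. As an alternative route that bypasses the moduli of quartics, one can argue directly that the stabilizer of a general $(2,2)$-form $B$ in $\PGL_2\times\PGL_3$ is trivial: any $(g_1,g_2)$ fixing $B$ with $g_2 = 1$ is a constant similitude over $\C$ of the rank-$2$ quadratic form with coefficients $a,b,c$ over the function field of $\mathbb{P}^2$, and for general $a,b,c$ the only such are $\pm\mathrm{id}$, hence trivial in $\PGL_2$; combined with $\Aut(\Delta_B) = 1$ this again forces $\Aut(X_B)\cong\Z/2\Z$.
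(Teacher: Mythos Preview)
Your argument is correct and follows the same overall strategy as the paper: combine Corollary~\ref{upper of aut of 2.18} with the classical fact that a general smooth plane quartic has trivial automorphism group, reducing everything to the dominance of the discriminant map. The difference is in how you establish dominance. The paper's Lemma~\ref{morph form (2,2) to 4} proves the stronger statement that $\Phi:M^{sm}_{(2,2)}\to M_4$ hits every smooth quartic, by invoking \cite[Theorem~4.5]{13}: any smooth plane quartic admits an \'etale double cover, and the associated Prym/conic-bundle construction produces quadrics $Q_1,Q_2,Q_3$ with $\Delta=\{Q_2^2-Q_1Q_3=0\}$, after which Proposition~\ref{smoothness} guarantees the resulting $(2,2)$-surface is smooth. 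Your route is more self-contained: you compute the differential of $\tilde\delta(a,b,c)=b^2-4ac$ and observe that its image at $(a_0,b_0,c_0)$ is the degree-$4$ piece of the ideal $(a_0,b_0,c_0)$, which is everything once the three conics form a regular sequence (Hilbert series $(1+t)^3$). This avoids any appeal to the theory of \'etale double covers and Prym varieties, at the cost of yielding only dominance rather than surjectivity onto $M_4^{sm}$; for the theorem at hand, dominance is all that is needed. Your closing ``alternative route'' is essentially a recapitulation of Proposition~\ref{iinjective} and is not needed once Corollary~\ref{upper of aut of 2.18} is in place.
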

Moreover, by \cite[Example 5.14]{1}, we get the following corollary:
\begin{cor}(Corollary \ref{general linearizable})
Let $X$ be a general smooth Fano threefold of \textnumero 2.18. Then $X$ is $\Aut(X)$-linearizable. 
\end{cor}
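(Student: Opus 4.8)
The plan is to combine Theorem \ref{aut of general} with the general-point linearizability statement from \cite{1}. By Theorem \ref{aut of general}, for a general smooth Fano threefold $X$ of \textnumero 2.18 we have $\Aut(X)\cong \Z/2\Z$. So the question reduces to deciding whether the action of this order-$2$ automorphism is linearizable, and whether ``general'' in the sense of Theorem \ref{aut of general} is compatible with ``general'' in the sense of \cite[Example 5.14]{1}.

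First I would identify the nontrivial element $\iota\in\Aut(X)$. From the description of \textnumero 2.18 threefolds as double covers $X\to\Line^1\times\Line^2$ branched in a smooth $(2,2)$-divisor, the natural candidate is the Galois involution $\sigma$ of the double cover (the ``Geiser-type'' or deck transformation). The exact sequence $0\to\Z/2\Z\to\Aut(X)\to G\to 1$ from Corollary \ref{upper of aut of 2.18} exhibits exactly this: the kernel $\Z/2\Z$ is generated by the deck transformation, and for $X$ general one has $G=1$ (this is the content of Theorem \ref{aut of general}), so $\Aut(X)=\langle\sigma\rangle$. Thus I would first record that the generator of $\Aut(X)$ is the double-cover involution $\sigma$.

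Next I would invoke \cite[Example 5.14]{1}, which (as cited in the introduction) treats precisely the $\sigma$-action on a general \textnumero 2.18 threefold and shows that $X$ with this $\langle\sigma\rangle$-action is linearizable — for instance by exhibiting an explicit $\sigma$-equivariant birational map to $\Line^3$ with a linear $\Z/2\Z$-action, or by verifying the action passes the relevant equivariant criteria (no equivariant IJT-obstruction, via Theorem \ref{intro IJT}, together with an actual construction of the equivariant birational map). The only genuine point to check is that the openness conditions match up: Theorem \ref{aut of general} asserts a statement for $X$ in a dense open subset $U_1\subset M_{(2,2)}^{sm}$ where $\Aut(X)=\Z/2\Z$, while \cite[Example 5.14]{1} gives linearizability of the $\sigma$-action for $X$ in a dense open subset $U_2\subset M_{(2,2)}^{sm}$. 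Since both are dense open in the irreducible variety $M_{(2,2)}^{sm}$, their intersection $U_1\cap U_2$ is again dense open and nonempty; a general $X$ lies in this intersection, and for such $X$ we have both $\Aut(X)=\langle\sigma\rangle$ and linearizability of the $\langle\sigma\rangle$-action, which is exactly $\Aut(X)$-linearizability.

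The main obstacle is therefore not a hard new argument but bookkeeping: making sure the two notions of ``general'' are taken inside the same parameter space $M_{(2,2)}^{sm}$ and that the involution treated in \cite[Example 5.14]{1} is indeed the generator of $\Aut(X)$ produced by Corollary \ref{upper of aut of 2.18}. If \cite[Example 5.14]{1} is phrased over $\C$ only, one additional step is needed: either appeal to the characteristic-zero generalization developed in Sections \ref{section 2}--\ref{section 5} (so that the equivariant birational map and the absence of the IJT-obstruction descend to an arbitrary algebraically closed field of characteristic zero), or simply note that linearizability of an explicit equivariant birational construction is insensitive to the base field once it is defined over $\Q$. With that in place, the corollary follows immediately.
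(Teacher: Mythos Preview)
Your proposal is correct and follows essentially the same approach as the paper: combine Theorem \ref{aut of general} with \cite[Example 5.14]{1} (which appears in the paper as Theorem \ref{CTT24 Example 5.14}). One simplification worth noting: Theorem \ref{CTT24 Example 5.14} asserts $\langle\tau\rangle$-linearizability for \emph{every} smooth Fano threefold of \textnumero 2.18 with smooth discriminant $\Delta$, not just for a general one, so your $U_1\cap U_2$ bookkeeping and the worry about compatibility of two ``general'' conditions are unnecessary --- once $\Aut(X)=\langle\tau\rangle$, the conclusion is immediate.
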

However, there exist regular actions on smooth Fano threefolds of \textnumero 2.18 that are not projectively linearizable (see Example \ref{nolinearaction} and \cite[Remark 5.11 and Example 5.12]{1}). 

In this paper, a variety is an integral separated scheme of finite type over a field. In addition, a surface and a threefold are varieties of dimension 2 and 3, respectively. Also, a curve is a separated scheme of finite type over a field. A closed point of a scheme $X$ over an algebraically closed field is simply called a point of $X$.

\section*{Acknowledgement}
I would like to thank my supervisor, Professor Sho Tanimoto, for providing guidance. 
I am also grateful to Fumiya Okamura and Runxuan Gao for their relevant feedback. Finally, we would like to thank the referees for their detailed comments and valuable suggestions, which have significantly improved the presentation of the paper. The author was partially supported by JST FOREST program Grant number JPMJFR212Z and JST SPRING Grant number JPMJSP2125. This paper is based on the author's master's thesis \cite{Shu25}.

\section{Equivariant principally polarized abelian varieties over algebraically closed fields}\label{section 2}
We generalize the theory of $G$-equivariant principally polarized abelian varieties over $\C$ given by \cite[Section 3]{1} to arbitrary algebraically closed fields.

Let $G$ be a finite group and $k$ be an algebraically closed field. 
\begin{dfn}(\cite[Section 3]{1})
A $G$-abelian variety over $k$ is an abelian variety $A$ over $k$ with a regular $G$-action which is compatible with the group scheme structure of $A$.
If $A$ is $G$-abelian variety, then any element $g\in G$ gives an automorphism of $A$ as a homomorphism of abelian varieties.

A pair $(A,\theta_A)$ of an abelian variety $A$ and a $G$-invariant principal polarization $\theta_A\in \NS(A)^{G}$ is called a $G$-equivariant principally polarized abelian variety (or $G$-ppav).

Let $(A,\theta_A)$ and $(B,\theta_B)$ be $G$-equivariant principally polarized abelian varieties and $f:A\to B$ be a $G$-equivariant homomorphism of abelian varieties. A $G$-equivariant homomorphism $f:A\to B$ is called a $G$-equivariant homomorphism of $G$-equivariant principally polarized abelian varieties if $f$ satisfies $f^*\theta_B=\theta_A$ in $\NS(A)$.
\end{dfn}
Let $A$ be a $G$-abelian variety. Then $G$ naturally acts regularly on the dual $A^t$ of $A$.
If $L$ is a line bundle on $A$ whose class is $G$-invariant as an element in $\NS(A)$, then $L$ and $g^*L$ are algebraically equivalent for any $g\in G$. Therefore the homomorphism $\varphi_{L}:A\to A^t$, associated to the class of $L$, is $G$-equivariant. Here the homomorphism $\varphi_{L}:A\to A^t$ is a homomorphism of abelian varieties that induces the homomorphism \[\varphi_{L}(k):A(k)\to A^t(k):a\mapsto t^*_aL-L.\]
Here $t_a:A\to A$ is the translation for $a\in A(k)$.
In addition, if $f:A\to B$ is a $G$-equivariant homomorphism of $G$-varieties, then the dual of $f$
\[f^t:B^t\to A^t\]
which induces the map
\[f^t(k):B^t(k)\to A^t(k);L\mapsto f^*(L)\]
is a $G$-eqivariant homomorphism of $G$-abelian varieties.
\begin{example}
Let $C$ be a smooth projective curve over an algebraically closed field $k$ with a regular action of a finite group $G$. 
Then the Jacobian variety $\JJ(C)=\PPic^0_{C/k}$ of $C$ is a $G$-abelian variety, and 
the natural bijection $\phi^1_C:\Pic^0(C)\to \JJ(C)(k)$ is $G$-equivariant. 
Since the theta divisor of $C$ is $G$-invariant, the canonical principal polarization $\theta_C$ is $G$-invariant, i.e., the Jacobian variety $\JJ(C)$ is a $G$-equivariant principally polarized abelian variety.
\end{example}

Murre shows that a principally polarized abelian variety over an algebraically closed field $k$ is isomorphic to the product of indecomposable, principally polarized abelian varieties over $k$ \cite[Lemma 10]{4}. The following lemma and corollary are an equivariant version of \cite[Lemma 10]{4}.
\begin{lem}(\cite[Lemma 3.1]{1} and \cite[Lemma 10 $($i$)$]{4})\label{sub ppav}
Let
\[i:(A',\theta_{A'})\hookrightarrow(A,\theta_A)\]
be a $G$-equivariant immersion of $G$-equivariant principally polarized abelian varieties over an algebraically closed field. Then there exists a $G$-equivariant principally polarized abelian variety $(A'',\theta_{A''})$ such that we have a $G$-equivariant isomorphsm
\[(A,\theta_A)\Gcong (A'\times A'', p_{A'}^*\theta_{A'}+p_{A''}^*\theta_{A''})\]
as $G$-equivariant principally polarized abelian varieties, where $p_{A'}:A'\times A''\to A'$ and $ p_{A''}:A'\times A''\to A''$ are the natural projections.
\end{lem}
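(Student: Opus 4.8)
The plan is to run Murre's proof of \cite[Lemma 10]{4} (whose $\C$-version is \cite[Lemma 3.1]{1}) and to observe that every abelian variety appearing in it is constructed canonically from the data $(A,\theta_A)$, $(A',\theta_{A'})$ and $i$, so that the $G$-action propagates automatically. Write $\varphi_{\theta_A}\colon A\to A^t$ and $\varphi_{\theta_{A'}}\colon A'\to (A')^t$ for the isomorphisms of abelian varieties attached to the principal polarizations, and $i^t\colon A^t\to (A')^t$ for the dual of $i$. Since $\theta_A$ and $\theta_{A'}$ are $G$-invariant and $i$ is $G$-equivariant, the discussion preceding the lemma shows that $\varphi_{\theta_A}$, $\varphi_{\theta_{A'}}$, $\varphi_{\theta_{A'}}^{-1}$ and $i^t$ are all $G$-equivariant homomorphisms of $G$-abelian varieties. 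The hypothesis $i^*\theta_A=\theta_{A'}$ gives $i^t\circ\varphi_{\theta_A}\circ i=\varphi_{\theta_{A'}}$, hence
\[\varepsilon:=i\circ\varphi_{\theta_{A'}}^{-1}\circ i^t\circ\varphi_{\theta_A}\colon A\longrightarrow A\]
is a $G$-equivariant endomorphism with $\varepsilon\circ i=i$, and therefore $\varepsilon^2=\varepsilon$. This is the symmetric idempotent cutting out the subvariety $A'$.

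Next I would introduce the complementary factor. Let $A''$ be the reduced connected component of the identity of $\ker\varepsilon$, and $j\colon A''\hookrightarrow A$ the inclusion. As $\varepsilon$ is $G$-equivariant and fixes $0$, the subgroup scheme $\ker\varepsilon$ is $G$-stable, and both ``reduced subscheme'' and ``identity component'' are canonical operations, so $A''$ is a $G$-stable abelian subvariety and $j$ is $G$-equivariant. By the non-equivariant statement \cite[Lemma 10]{4}, the addition morphism
\[\mu\colon A'\times A''\longrightarrow A,\qquad(a',a'')\longmapsto i(a')+j(a''),\]
is an isomorphism of abelian varieties, and, setting $\theta_{A''}:=j^*\theta_A\in\NS(A'')$, one has $\mu^*\theta_A=p_{A'}^*\theta_{A'}+p_{A''}^*\theta_{A''}$ in $\NS(A'\times A'')$ with $\theta_{A''}$ a principal polarization. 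No new work is needed at this step: it is exactly the content of Murre's lemma over an algebraically closed field.

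Finally I would record the equivariance of this splitting. The morphism $\mu$ factors as the composite of $(i,j)\colon A'\times A''\to A\times A$ with the group law $A\times A\to A$; the former is $G$-equivariant because $i$ and $j$ are, the latter because $G$ acts on $A$ by group automorphisms, so $\mu$ is a $G$-equivariant isomorphism. The class $\theta_{A''}=j^*\theta_A$ lies in $\NS(A'')^G$ since $j$ is $G$-equivariant and $\theta_A$ is $G$-invariant, so $(A'',\theta_{A''})$ is a $G$-ppav, and the equality $\mu^*\theta_A=p_{A'}^*\theta_{A'}+p_{A''}^*\theta_{A''}$ then says precisely that $\mu$ is a $G$-equivariant isomorphism of $G$-ppavs
\[(A,\theta_A)\Gcong\bigl(A'\times A'',\ p_{A'}^*\theta_{A'}+p_{A''}^*\theta_{A''}\bigr).\]

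I expect no genuine difficulty here: the argument is ``equivariance for free'', and the only points needing attention are (i) that the idempotent $\varepsilon$ is assembled exclusively from $G$-equivariant morphisms, which is what forces $\ker\varepsilon$ and hence $A''$ to be $G$-stable, and (ii) that one invokes Murre's lemma in the form valid over an arbitrary algebraically closed field, so that any reducedness or identity-component subtleties of $\ker\varepsilon$ in positive characteristic are already absorbed into the cited statement rather than left to us.
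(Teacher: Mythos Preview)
Your proof is correct and follows essentially the same route as the paper: both define $A''$ as the reduced identity component of the kernel of the $G$-equivariant map $i^t\circ\varphi_{\theta_A}$ (your $\ker\varepsilon$ coincides with this since $i\circ\varphi_{\theta_{A'}}^{-1}$ is a closed immersion), verify that the addition map $A'\times A''\to A$ is a $G$-equivariant isomorphism, and check that the induced polarization on $A''$ is $G$-invariant. The only difference is presentational: you cite Murre's lemma as a black box for the non-equivariant content and write $\theta_{A''}=j^*\theta_A$ directly, whereas the paper spells out the seesaw and Riemann--Roch steps and obtains $\theta_{A''}$ via the pushforward $L_{A''}=(p_{A''})_*(L_A-p_{A'}^*L_{A'})$, which agrees with $j^*\theta_A$ in $\NS(A'')$.
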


\begin{proof}
Let $L_A$ and $L_{A'}$ be line bundles on $A$ and $A'$ which represent $\theta_{A}$ and $\theta_{A'}$, respectively.
Since the algebraically equivalence classes $i^*\theta_A$ and $\theta_{A'}$ are equal, without loss of generality we can assume that $i^*L_A\cong L_{A'}$.
Let $f$ be the homomorphism defined as 
\[i^t\circ\varphi_{L_A}:A\to (A')^t;a\mapsto [i^*(t^*_aL_A-L_A)].\]
Then $f$ is $G$-equivariant and $f\circ i=\varphi_{L_{A'}}$.
Since $\theta_{A'}$ is principal, $f\circ i=\varphi_{L_{A'}}$ is a $G$-equivariant isomorphism. Hence $f$ is surjective and $i^{-1}(\ker (f))=0$, i.e. $A'\cap \ker (f)=0$ as $A$-schemes. 

We define $A'':=(\ker (f)_{\text{red}})^0$, then $A''$ is an abelian variety and the $G$-action on $A$ induces a regular $G$-action on $A''$ that preserves the origin because $f$ is $G$-equivariant. Therefore $A''$ is a $G$-abelian variety and $A'\cap A''=0$ holds as group schemes. Also we have $\dim A=\dim A'+\dim A''$. We denote by 
\[j:A''\inj A\]
the natural $G$-equivariant immersion, then 
\[\rho:A'\times A''\to A;(a',a'')\mapsto i(a')+j(a''),\]
is a $G$-equivariant isomorphism of $G$-abelian varieties. Indeed, since $i$ and $j$ are injective and $A'\cap A''=0$ holds, $\rho$ is injective. From this and $\dim A=\dim A'+\dim A''$, we see that the morphism $\rho$ is surjective. The $G$-eqivariant homomorphism $\rho$ identifies $A$ with $A'\times A''$ as $G$-varieties.

Let $a''\in A''$ be a closed point and $D=L_A-p^*_{A'}L_{A'}$. Then the following diagram is commutative:
\[
  \begin{diagram}
    \node{A'\times\{a"\}} \arrow{e,t}{t_{(0,-a")}} \arrow{s,l}{\rho} \node{A'} \arrow{s,r}{i} \\
    \node{A}\arrow{ne,t}{p_{A'}}  \node{A}\arrow{w,r}{t_{(0,a")}}
  \end{diagram}
\]
Because we have $a''\in A''(k)=(\ker (i^t\circ \varphi_{L_A}))^0(k)$, we get
\[i^*t^*_{(0,a'')}L_{A}- L_{A'}=i^t\circ \varphi_{L_A}\circ j(a'')=0\in (A')^t.\]
Hence we have
\[i^*t^*_{(0,a'')}L_{A}\cong L_{A'}.\]
From the above isomorphism and the fact that
\[\rho =t_{(0,a'')}\circ i\circ t_{(0,-a'')}\]
holds on $A'\times\{a''\}$, we obtain
\begin{equation}\label{11}
(L_A)_{a''}=(t_{(0,-a'')}|_{A'\times\{a''\}})^*i^*t^*_{(0,a'')}L_{A}\cong (t_{(0,-a'')}|_{A'\times\{a''\}})^*L_{A'}.
\end{equation}
Here $(L_A)_{a''}$ is the restriction of $L_A$ to $A'\times \{a''\}$. In addition, $t_{(0,-a'')}=p_{A'}\circ \rho$ on $A'\times\{a''\}$, hence we have
\begin{equation}\label{12}
(p_{A'}^*L_{A'})_{a''}=(t_{(0,-a'')}|_{A'\times\{a''\}})^*L_{A'}.
\end{equation}
From the equations (\ref{11}) and (\ref{12}), we find that
\[D_{a''}=(L_A)_{a''}-(p_{A'}^*L_{A'})_{a''}\]
is trivial.
Let $L_{A''}$ be the $\ShO_{A''}$-module $(p_{A''})_*D$. Then $L_{A''}$ is a line bundle on $A''$ and we have  
\[L_A\cong p_{A'}^*L_{A'}+ p_{A''}^*L_{A''}\]
cf. \cite[III Exercise 12.4]{2}. Let $\theta_{A''}$ be the class of $L_{A''}$ in $\NS(A'')$. Since the class of $D$ is $G$-invariant as an element in $\NS(A)$ and $p_{A''}$ is $G$-equivariant, the class $\theta_{A''}$ is $G$-invariant as an element in $\NS(A'')$.
Set $n=\dim A,n'=\dim A',n''=\dim A''$. Since $\theta_A$ and $\theta_{A'}$ are principal on $A$ and $A'$, respectively, by the Riemann-Roch theorem for the line bundles $L_A$ and $L_{A'}$ \cite[Section 16, p.140]{9}, we get
\[n!=L_A^{n}=\binom{n}{n'}L_{A'}^{n'}\cdot L_{A''}^{n''}=\frac{n!}{n''!}L_{A''}^{n''}.\]
Therefore we obtain $L_{A''}^{n''}=n''!$, i.e. $\theta_{A''}$ is principal on $A''$ by the Riemann-Roch theorem for the line bundle $L_{A''}$.
\end{proof}
An indecomposable $G$-equivariant principally polarized abelian variety is a non-zero $G$-equivariant principally polarized abelian variety that is not isomorphic to the product of two non-zero $G$-equivariant principally polarized abelian varieties. By Lemma \ref{sub ppav}, we have the following corollary: 
\begin{cor}(\cite[Corollary 3.2]{1} and \cite[Lemma 10 $($ii$)$]{4})\label{decomposition of ppav}
Any $G$-equivariant principally polarized abelian variety over an algebraically closed field has a unique decomposition into the product of indecomposable $G$-equivariant principally polarized abelian varieties up to permutation of the factors. 
\end{cor}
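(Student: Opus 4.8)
The plan is to run the classical Krull--Schmidt type argument, adapting the proofs of \cite[Corollary 3.2]{1} and \cite[Lemma 10]{4} to the present setting; the only geometric input is Lemma \ref{sub ppav}, which is now available over an arbitrary algebraically closed field.

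Existence of a decomposition into indecomposables follows by induction on $\dim A$. If $(A,\theta_A)$ is zero or indecomposable there is nothing to do; otherwise, by definition, $(A,\theta_A)\Gcong(A_1,\theta_{A_1})\times(A_2,\theta_{A_2})$ with both factors nonzero, hence of strictly smaller dimension, and one applies the inductive hypothesis to each factor and multiplies the resulting decompositions.

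For uniqueness I would translate $G$-ppav decompositions into idempotent data in the endomorphism algebra. Since $\theta_A\in\NS(A)^G$, the polarization homomorphism $\varphi_{\theta_A}\colon A\to A^t$ is $G$-equivariant, so the Rosati involution $x\mapsto x':=\varphi_{\theta_A}^{-1}\circ x^t\circ\varphi_{\theta_A}$ preserves the finite-dimensional $\Q$-algebra $R:=\mathrm{End}_G(A)\otimes_{\Z}\Q$ of $G$-equivariant endomorphisms up to isogeny and restricts there to a positive involution. The structural facts needed --- positivity of the Rosati involution, and the consequence that a finite-dimensional $\Q$-algebra with a positive involution is semisimple --- hold over any base field, so $R$ is semisimple and, by the Wedderburn--Albert structure theorem, a product of matrix algebras over division rings, with the involution respecting the product. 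A $G$-ppav decomposition $(A,\theta_A)\Gcong\prod_i(B_i,\theta_{B_i})$ produces, from the inclusions $v_i\colon B_i\hookrightarrow A$ and projections $q_i\colon A\to B_i$, a complete orthogonal system of idempotents $\varepsilon_i:=v_i\circ q_i\in\mathrm{End}_G(A)$; since the polarization on the product is $\sum_i p_i^{*}\theta_{B_i}$, each $\varepsilon_i$ is symmetric, i.e. $\varepsilon_i'=\varepsilon_i$, the restriction of $\theta_A$ to $\im(\varepsilon_i)$ is principal, and $(B_i,\theta_{B_i})$ is indecomposable precisely when $\varepsilon_i$ admits no splitting $\varepsilon_i=f_1+f_2$ into nonzero orthogonal symmetric idempotents of $R$ with principal restricted polarizations. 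Conversely, given such a symmetric idempotent $\varepsilon$, Lemma \ref{sub ppav} realizes $\im(\varepsilon)$ and $\im(\id_A-\varepsilon)$ as $G$-ppav factors of $(A,\theta_A)$; hence indecomposable $G$-ppav decompositions of $(A,\theta_A)$ correspond to maximal orthogonal systems of symmetric idempotents of this kind in $R$.

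It then remains to prove the purely algebraic assertion that any two such maximal systems in a semisimple $\Q$-algebra with positive involution are matched block by block by conjugation with a unit, and that matched idempotents yield $G$-equivariantly isomorphic ppav factors; this reduces, after projecting to a simple block, to analyzing minimal self-adjoint idempotents of $M_n(D)$ equipped with a positive involution, and is where I expect the real work to lie. Granting it, the multiset of isomorphism classes of indecomposable factors is determined by $(R,{}')$, hence by the triple $(A,\theta_A,G)$, which is the claim. A direct ``exchange lemma'' phrased entirely in terms of the homomorphisms $q_i\circ u_j\colon C_j\to B_i$ between two putative decompositions $\prod_i B_i\Gcong A\Gcong\prod_j C_j$ is also conceivable, but it does not seem to circumvent this algebraic core. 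Finally, since every ingredient other than Lemma \ref{sub ppav} is characteristic-free, the passage from the complex case of \cite[Corollary 3.2]{1} to an arbitrary algebraically closed field introduces no new difficulty once Lemma \ref{sub ppav} is in place.
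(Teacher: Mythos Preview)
Your existence argument by induction is fine. The problem is uniqueness: you explicitly flag the central step --- that two maximal orthogonal systems of symmetric idempotents in $R$ are matched by a unit, and that matched idempotents yield isomorphic $G$-ppav factors --- as unproved and write ``Granting it.'' That is not a peripheral technicality; it is essentially the entire content of uniqueness, so as written the argument is incomplete. There is also a mismatch in your dictionary: the idempotents coming from honest $G$-ppav splittings lie in $\mathrm{End}_G(A)$, whereas you pass to $R=\mathrm{End}_G(A)\otimes_{\Z}\Q$. Not every symmetric idempotent of $R$ lies in $\mathrm{End}_G(A)$ --- on $E\times E$ with the product principal polarization the projection onto the diagonal is a symmetric idempotent of $\mathrm{End}^0$ but not of $\mathrm{End}$, and the restricted polarization on the diagonal is $2\theta_E$, not principal --- so the semisimplicity of $R$ does not interface with ppav decompositions as cleanly as you suggest.

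The paper itself gives no argument: it records that once Lemma~\ref{sub ppav} is available over $k$, the proofs in \cite[Corollary~3.2]{1} and \cite[Lemma~10(ii)]{4} apply verbatim. In particular you may invoke Murre's non-equivariant uniqueness over $k$ as a black box rather than reprove it. The shortest way to conclude is then: the non-equivariant indecomposable factors of $(A,\theta_A)$ are canonical abelian subvarieties of $A$, so every ppav automorphism of $A$ --- in particular each $g\in G$ --- permutes them; the indecomposable $G$-ppav factors are therefore exactly the products over $G$-orbits of non-equivariant indecomposable factors, and since there is only one partition into $G$-orbits, uniqueness follows. This bypasses your missing algebraic lemma entirely and is presumably what \cite[Corollary~3.2]{1} has in mind.
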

An indecomposable $G$-equivariant principally polarized abelian variety is also called irreducible.
\begin{example}(\cite{1})
Let $G$ be a finite group and $C$ be a smooth projective $G$-irreducible $G$-curve of genus $g\ge 1$. Then, the $G$-equivariant Jacobian $\JJ(C)$ is an irreducible $G$-equivariant principally polarized abelian variety. 

Here a $G$-curve is $G$-irreducible if the $G$-action on irreducible components of the curve is transitive \cite[in proof of Theorem 3.3]{1}.
\end{example}

\section{Equivariant intermediate Jacobian torsor obstructions}\label{section 3}
In this section, assume that $k$ is an algebraically closed field. We introduce an equivariant version of Murre's intermediate Jacobian over $k$, called an equivariant Murre's intermediate Jacobian. In addition, we show that a principal polarization on an equivariant Murre's intermediate Jacobian given by \cite{3} is a $G$-invariant principal polarization. 

T. Ciurca, S. Tanimoto, and Y. Tschinkel constructed an obstruction to the projective linearizability of a finite group action over $\C$, called the equivariant IJT-obstruction \cite{1}.
We generalize the equivariant IJT-obstruction to an algebraically closed field $k$ of characteristic zero. See \cite[(11.5)]{GW10} for the definition and properties of torsors.

\subsection{Murre's intermediate Jacobians}\label{Murre IJ}
Following \cite[Section 2]{3}, we introduce Murre's intermediate Jacobian over $k$.
A smooth projective (rationally chain connected) variety $X$ over $k$ is associated with an abelian variety $\Ab^2(X)$ over $k$, called the algebraic representative for algebraically trivial codimension $2$ cycles on $X$. Over an algebraically closed field $k$, the abelian variety $\Ab^2(X)$ is constructed by Murre \cite{14}. Let $\CH^i(X)$ be the Chow group of codimension $i$ on a smooth projective variety $X$ over $k$. We define $\CH^i(X)_{alg}\subset \CH^i(X)$ to be the subgroup of cycles algebraically equivalent to $0$. 
\begin{dfn}(\cite[Section 2]{BS83})
Let $X$, $A$ be a smooth projective variety and an abelian variety over $k$, respectively. A homomorphism 
\[\phi: \CH^i(X)_{alg}\to A(k)\]
is regular if for any smooth connected variety $T$ over $k$, $t_0\in T(k)$ and codimension-$i$ cycle $Z\in Z^i(T\times X)$, the composition 
\begin{align*}
T(k)&\to \CH^i(X)_{alg}\to A(k)\\
t&\mapsto Z_t-Z_{t_0}\mapsto \phi(Z_t-Z_{t_0})
\end{align*}
is induced by a morphism $T\to A$.
\end{dfn}
By the following theorem, the abelian variety $\Ab^2(X)$ associated to $X$ is unique up to isomorphism.
\begin{thm}(\cite[Proposition 2.3]{3})\label{AJ}
Let $X$ be a smooth projective variety over $k$. Then there exists a group morphism 
\[\phi_X^2:\CH^2(X)_{alg}\to \Ab^2(X)(k)\]
that is surjective and initial among regular homomorphisms with values in an abelian variety over $k$. Moreover, if $X$ is rationally chain connected, then $\phi_X^2$ is bijective.
\end{thm}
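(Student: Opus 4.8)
\emph{Strategy.} I would treat the two assertions separately: (i) the existence of a surjective regular homomorphism $\phi_X^2$ that is initial among regular homomorphisms from $\CH^2(X)_{alg}$ to abelian varieties, and (ii) its bijectivity when $X$ is rationally chain connected. For (i) the natural route is to reconstruct Murre's construction of the algebraic representative for codimension-two cycles, which is insensitive to the characteristic; for (ii) the decisive input is the Bloch--Srinivas decomposition of the diagonal. Whenever a Hodge-theoretic estimate is invoked, one reduces from an arbitrary algebraically closed field $k$ of characteristic zero to $\C$ by spreading $X$ out over a finitely generated subfield and embedding it into $\C$ --- Chow groups, abelian varieties, and the regularity condition on homomorphisms (tested on finite-type parameter schemes) all behave well under such base change --- or else one argues uniformly with $\ell$-adic \'etale cohomology.

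\emph{The universal object.} The argument for (i) goes through over any algebraically closed field once one has the following boundedness: there is an integer $N=N(X)$ with $\dim A\le N$ whenever an abelian variety $A$ admits a \emph{surjective} regular homomorphism $\phi\colon\CH^2(X)_{alg}\to A(k)$. Over $\C$ one shows that $N=\tfrac12 b_3(X)$ works --- via the theory of Griffiths and Lieberman one relates such an $A$, up to isogeny, to the algebraic part $J^2_a(X)$ of the intermediate Jacobian, whose dimension is at most $\tfrac12 b_3(X)$ --- and over an arbitrary algebraically closed field of characteristic zero one descends this bound from $\C$ (spreading $X$ out over a finitely generated subfield and base-changing) or argues with $\ell$-adic cohomology. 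Granting boundedness, choose among all surjective regular homomorphisms one, $(\Ab^2(X),\phi_X^2)$, of maximal dimension; for any regular $\psi\colon\CH^2(X)_{alg}\to A(k)$ one may assume $\psi$ surjective (replace $A$ by $\overline{\im\psi}$), and then $C:=\overline{\im(\phi_X^2,\psi)}\subseteq\Ab^2(X)\times A$ carries a surjective regular homomorphism whose first projection $p_1\colon C\to\Ab^2(X)$ is surjective, hence an isogeny by maximality. The delicate step is promoting $p_1$ to an isomorphism: in characteristic zero $\ker p_1$ is finite \'etale, and were it nontrivial one would produce a surjective regular homomorphism onto $C$ strictly enlarging $\Ab^2(X)$, a contradiction (Murre's comparison lemma). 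Then $p_2\circ p_1^{-1}\colon\Ab^2(X)\to A$ is the sought factorization, unique because $\phi_X^2$ has dense image, and $\phi_X^2$ is surjective by construction.

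\emph{Bijectivity.} For assertion (ii), when $X$ is rationally chain connected one has $\CH_0(X_{k'})\otimes\Q\cong\Q$ for every field extension $k'/k$ (over the algebraic closure one joins closed points by chains of rational curves, and the kernel of the degree map is torsion). Applying this to $k'=k(X)$ and using the localization sequence gives, for a suitable $N\ge 1$, a Bloch--Srinivas decomposition $N\,\Delta_X=N\,(X\times\{x_0\})+\Gamma$ in $\CH^3(X\times X)$ with $\Supp\Gamma\subseteq D\times X$ for a divisor $D\subsetneq X$. Since $(X\times\{x_0\})_*$ vanishes on $\CH^2(X)_{alg}$ for dimension reasons, multiplication by $N$ on $\CH^2(X)_{alg}$ factors through pull-back to a resolution $\tau\colon\widetilde D\to X$ of $D$ followed by a push-forward; the same correspondence exhibits $H^3(X)$ as a subquotient of $H^3(\widetilde D)=\bigoplus_i H^3(\widetilde D_i)$, so it is of niveau one and $H^0(X,\Omega_X^3)=0$. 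The crucial point --- and this is where the codimension-two/threefold coincidence enters --- is that the retraction then passes through the Albanese quotients, producing a regular homomorphism $\lambda\colon\CH^2(X)_{alg}\to B(k)$ into an abelian variety $B$ (assembled from the $\Pic^0(\widetilde D_i)$) and a push-forward $\mu$ with $\mu\circ\lambda=N\cdot\mathrm{id}$. By the universal property $\lambda$ factors through $\phi_X^2$, so $\ker\phi_X^2$ lies in the $N$-torsion of $\CH^2(X)_{alg}$; since $\phi_X^2$ is injective on torsion by the codimension-two analogue of Roitman's theorem, $\ker\phi_X^2=0$, and $\phi_X^2$, being also surjective, is bijective.

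\emph{Main obstacle.} The two genuinely non-formal ingredients are the boundedness lemma --- together with its transfer from $\C$ to an arbitrary algebraically closed field of characteristic zero --- and, for the last assertion, the ``representability'' of $\CH^2(X)_{alg}$ for a rationally chain connected threefold, namely that the decomposition of the diagonal confines one to the Picard varieties of auxiliary curves and surfaces rather than to an infinite-dimensional group of zero-cycles. Both rely on the special behaviour of codimension-two cycles; for cycles of higher codimension the analogues are open, being entangled with the Bloch--Beilinson conjectures.
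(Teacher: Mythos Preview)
The paper does not supply its own proof of this theorem; it is imported wholesale from \cite[Proposition~2.3]{3}, which in turn packages Murre's construction of the algebraic representative \cite{14} together with the Bloch--Srinivas representability theorem \cite[Theorem~1]{BS83}. Your proposal reconstructs precisely that route --- Murre's maximality argument for existence, decomposition of the diagonal for injectivity --- so in substance you are aligned with the sources the paper defers to.

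Two points on the write-up, though neither is a genuine gap. First, the statement is for $X$ of arbitrary dimension over an arbitrary algebraically closed field, but in your treatment of (ii) you place the diagonal in $\CH^3(X\times X)$, speak of $H^3$ being of niveau one, and conclude $H^0(X,\Omega_X^3)=0$ --- all as if $\dim X=3$. The Bloch--Srinivas argument in fact works uniformly: the correspondence supported on $D\times X$ acts on $\CH^2(X)_{alg}$ by first pulling back to $\CH^1(\widetilde D)_{alg}=\PPic^0_{\widetilde D/k}(k)$ and then pushing forward, so representability comes from the \emph{codimension} being two, not from the \emph{dimension} of $X$ being three. Your phrase ``the codimension-two/threefold coincidence'' conflates these. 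Second, for the boundedness lemma in (i) you argue via Hodge theory over $\C$ and then transfer to characteristic zero; since the paper states the result over any algebraically closed $k$, the cleaner (and historically original) route is to bound $\dim A$ by $\tfrac12\dim H^3_{\text{\'et}}(X,\Q_\ell)$ directly via $\ell$-adic cohomology, which you mention only in passing. Murre's argument \cite{14} is characteristic-free for exactly this reason.
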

For a morphism $g:Y\to X$ between smooth projective varieties over $k$, by the universal property of regular homomorphisms, there exists a unique morphism \begin{equation}\label{g+}
g^+:\Ab^2(X) \to\Ab^2(Y)\end{equation} of abelian varieties over $k$ such that $g^+(k)\circ \phi_X^2=\phi_Y^2\circ g^*$ holds.
If $X$ and $Y$ are smooth projective varieties over $k$ of the same dimension and $f:Y\to X$ is a morphism of varieties over $k$, then there exists a unique morphism \begin{equation}\label{f+}f_+:\Ab^2(Y) \to\Ab^2(X)\end{equation} of abelian varieties over $k$ such that $\phi_X^2\circ f_*=f_+(k)\circ \phi_Y^2$ holds.

Benoist-Wittenberg constructed a principal polarization $\theta_X$ on the abelian variety $\Ab^2(X)$ associated to a smooth projective rationally chain connected threefold $X$ over $k$ \cite[Section 2.3]{3}. 

We assume that $X$ is a smooth projective rationally chain connected threefold.
\begin{property}\cite[Property 2.4]{3}
There exists $\theta\in \NS(\Ab^2 (X))$ satisfying the following assertions.
\begin{itemize}
\item[$($i$)$] For all prime numbers $\ell$ invertible in $k$, the image $c_{1,\ell}(-\theta)$ of $-\theta$ by the $\ell$-adic first Chern class
\[c_{1,\ell}:\NS(\Ab^2(X))\hookrightarrow H^2_{\text{\'{e}t}}(\Ab^2(X),\Z_{\ell}(1))=\left(\bigwedge^2H^1_{\text{\'{e}t}}(\Ab^2(X),\Z_{\ell})\right)(1)\]
corresponds, via the isomorphism 
\[T_{\ell}(\lambda^2\circ(\phi_X^2)^{-1}):H^1_{\text{\'{e}t}}(\Ab^2(X),\Z_{\ell})^{\vee}\to 
H^3_{\text{\'{e}t}}(X,\Z_{\ell}(2))/(\text{torsion}), \]
to the cup product map
\[\bigwedge^2H^3_{\text{\'{e}t}}(X,\Z_{\ell}(2))\to H^6_{\text{\'{e}t}}(X,\Z_{\ell}(4))\overset{\text{deg}}{\longrightarrow}\Z_{\ell}(1).\]
\item[$($ii$)$]
The class $\theta\in \NS(\Ab^2 (X))$ is a principal polarization of $\Ab^2(X)$.
\end{itemize}
\end{property}
Here $\lambda^2$ is Bloch's Abel-Jacobi map and $T_{\ell}$ is the $\ell$-adic Tate functor (see \cite[Section 2.2 and Section 2.3]{3}).
Property A only depends on $X$ and the class $\theta$ in Property A $($i$)$ is unique since $c_{1,\ell}$ is injective \cite[Section 2.3]{3}.
If $X$ satisfies Property A, then $(\Ab^2(X),\theta)$ is a principally polarized abelian variety over $k$, which we denote by $\IIJ(X)$ and call the intermediate Jacobian of $X$.

\begin{thm}(\cite[Corollary 2.8 and Proposition 2.5]{3})\label{BW20, prop2.5}
\begin{itemize}
\item[(1)] A smooth projective rational threefold over $k$ satisfies Property A.
\item[(2)] When $\ch k=0$, a smooth projective rationally connected threefold over $k$ satisfies Property A.
\end{itemize}
\end{thm}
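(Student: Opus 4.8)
The plan is to follow \cite[Proposition 2.5 and Corollary 2.8]{3}. The backbone is a birational invariance of Property A: if $f\colon\widetilde X\to X$ is the blow-up of a smooth projective threefold along a smooth center $Z$ --- a point, or a smooth irreducible curve $\Gamma$ --- then $X$ satisfies Property A if and only if $\widetilde X$ does. To establish this I would first combine the functorial morphisms $f^{+}$, $f_{+}$ of $(\ref{g+})$ and $(\ref{f+})$, applied both to $f$ and to the inclusion of the exceptional divisor, with the blow-up formula for Chow groups, obtaining an isomorphism of abelian varieties $\Ab^2(\widetilde X)\cong\Ab^2(X)\times\JJ(\Gamma)$ (the second factor being absent, and $\Ab^2$ unchanged, when $Z$ is a point). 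In parallel, the blow-up formula in $\ell$-adic cohomology gives $H^3_{\text{\'{e}t}}(\widetilde X,\Z_{\ell}(2))\cong H^3_{\text{\'{e}t}}(X,\Z_{\ell}(2))\oplus H^1_{\text{\'{e}t}}(\Gamma,\Z_{\ell}(1))$, and under this the cup-product pairing becomes the orthogonal direct sum of the pairing on $H^3_{\text{\'{e}t}}(X,\Z_{\ell}(2))$ and that on $H^1_{\text{\'{e}t}}(\Gamma,\Z_{\ell}(1))$, the latter being $c_{1,\ell}(-\theta_{\Gamma})$ for the canonical principal polarization $\theta_{\Gamma}$ of $\JJ(\Gamma)$.

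Granting that these two decompositions are intertwined by the Bloch map of Property A (i), the forward direction reads: if $\theta_X$ is a class satisfying Property A for $X$, then $p_1^{*}\theta_X+p_2^{*}\theta_{\Gamma}$ satisfies Property A (i) for $\widetilde X$, and it is a principal polarization on the product by the self-intersection/Riemann--Roch computation already carried out in the proof of Lemma \ref{sub ppav}. For the converse I would use that, by the matching of $\ell$-adic Chern classes, $\JJ(\Gamma)\hookrightarrow\Ab^2(\widetilde X)$ is an immersion of principally polarized abelian varieties; applying the non-equivariant case of Lemma \ref{sub ppav} splits off a complementary principally polarized factor, and comparing $\ell$-adic Chern classes --- using the injectivity of $c_{1,\ell}$ on $\NS$ of an abelian variety and on the summands of its K\"unneth-type decomposition --- identifies that factor with $(\Ab^2(X),\theta_X)$ for a class $\theta_X$ satisfying Property A.

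With blow-up invariance available, part (1) follows, since $\Line^3$ satisfies Property A vacuously ($\Ab^2(\Line^3)=0$ and $H^3_{\text{\'{e}t}}(\Line^3,\Z_{\ell})=0$): when $\ch k=0$ a birational map $X\tto\Line^3$ decomposes, by weak factorization, into blow-ups and blow-downs along smooth centers, which propagates the property from $\Line^3$ to $X$; alternatively, in characteristic $0$ this case is already contained in part (2), as a rational threefold is rationally connected. When $\ch k>0$ one resolves a birational map $X\tto\Line^3$ using resolution of singularities for threefolds, available in all characteristics, and runs blow-up invariance along the resulting birational morphisms; the extra ingredient is the Tate conjecture for divisors on abelian varieties over finitely generated fields, needed to know that the transported cup-product form is the class of a genuine element of $\NS(\Ab^2(X))$ and to control positivity of the polarization. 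For part (2), with $\ch k=0$, I would pass to $\C$ by the Lefschetz principle --- spreading $X$ out over a finitely generated subfield and embedding the latter into $\C$, since $\ell$-adic cohomology, $\Ab^2$ and $\NS$ are insensitive to extension of algebraically closed fields --- and then use that $H^0(X,\Omega^3_X)=0$ for rationally connected $X$ (Koll\'ar--Miyaoka--Mori), so $h^{3,0}=0$, the Griffiths intermediate Jacobian is an abelian variety canonically identified with $\Ab^2(X)$, and the cup-product pairing on $H^3(X,\Z)/(\text{torsion})$ --- unimodular by Poincar\'e duality and positive by Hodge--Riemann --- is a principal polarization whose $\ell$-adic first Chern class is the cup product by Artin comparison; descending back to $k$ finishes the argument.

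The main obstacle is the compatibility invoked in the first two paragraphs: that the blow-up correspondences intertwine the Bloch map of Property A (i) and send the cup-product pairing to an orthogonal direct sum, so that the polarization is genuinely additive over $\Ab^2(\widetilde X)\cong\Ab^2(X)\times\JJ(\Gamma)$. This is a cycle-theoretic verification combining the blow-up formulas for Chow groups and for \'etale cohomology with Bloch's and Murre's constructions; everything downstream --- the converse direction via Lemma \ref{sub ppav}, the reduction of (1) to $\Line^3$, and the reduction of (2) to the Clemens--Griffiths picture over $\C$ --- is comparatively formal.
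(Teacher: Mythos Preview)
The paper does not prove this theorem: it is stated as a direct citation of \cite[Corollary 2.8 and Proposition 2.5]{3} and used as a black box, so there is no ``paper's own proof'' to compare against. Your outline is a faithful summary of Benoist--Wittenberg's argument --- blow-up invariance of Property A combined with the trivial case of $\Line^3$ for part (1), and reduction to $\C$ via the Lefschetz principle followed by the Hodge-theoretic construction of the Clemens--Griffiths polarization for part (2).

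One quibble on the positive-characteristic discussion: the appeal to the Tate conjecture for divisors on abelian varieties is not how \cite{3} proceeds and is not needed. The class $\theta_X$ is produced as the pullback $(f^{+})^{*}\theta_{\widetilde X}$ along the closed immersion $f^{+}\colon\Ab^2(X)\hookrightarrow\Ab^2(\widetilde X)$, so it lies in $\NS(\Ab^2(X))$ by construction; ampleness is inherited because the restriction of an ample class to an abelian subvariety is ample, and principality follows because the cup-product pairing on $H^3_{\text{\'et}}(X,\Z_\ell(2))/(\text{torsion})$ is unimodular by Poincar\'e duality. Also, in characteristic $p$ one does not have weak factorization; the argument in \cite{3} instead combines blow-up invariance (one direction) with the observation that Property A descends along an arbitrary birational morphism of smooth projective threefolds (the other direction), which is enough once one resolves the rational map to $\Line^3$ by a smooth projective threefold dominating both.
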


\subsection{Equivariant Murre's intermediate Jacobians}\label{section 3.2}
In this section, we introduce an equivariant abelian variety $\Ab^2(X)$ over $k$ associated to a smooth projective rationally chain connected threefold $X$ over $k$. Moreover, we show that the morphisms introduced in Section \ref{Murre IJ} are equivariant.

Let $X$ be a smooth projective threefold over $k$ with a regular action of a finite group $G$. Then the $G$-action on $X$ induces a $G$-action on $\CH^2(X)$ and $\CH^2(X)_{alg}$.
\begin{lem}\label{action on Ab}
The $G$-action on $\CH^2(X)_{alg}$ induces a regular $G$-action on $\Ab^2(X)$ by the universal property of regular homomorphisms, and $\Ab^2(X)$ is a $G$-abelian variety for this $G$-action on $\Ab^2(X)$. Moreover, $\phi_X^2$ is $G$-equivariant. 
\end{lem}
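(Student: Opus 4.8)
The plan is to build the $G$-action on $\Ab^2(X)$ out of the functorial pushforward maps on algebraic representatives and to read off every required property from the uniqueness clause in the defining relation (\ref{f+}) (together with Theorem~\ref{AJ}). Fix the convention that $G$ acts on $\CH^2(X)$ and $\CH^2(X)_{alg}$ by $g\cdot\alpha=g_*\alpha$; this is a left action because $(gh)_*=g_*\circ h_*$. For each $g\in G$ the automorphism $g\colon X\to X$ is a morphism of smooth projective threefolds of the same dimension, so (\ref{f+}) yields a homomorphism of abelian varieties $g_+\colon\Ab^2(X)\to\Ab^2(X)$, and it is the \emph{unique} such homomorphism satisfying $\phi_X^2\circ g_*=g_+(k)\circ\phi_X^2$. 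Set $g\cdot a:=g_+(a)$ for $a\in\Ab^2(X)$; the identity just recorded says precisely that $\phi_X^2$ is $G$-equivariant for this candidate action, so the equivariance assertion will follow once the action is shown to be well defined.

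Next I would verify that $g\mapsto g_+$ is a group homomorphism into the automorphism group of the abelian variety $\Ab^2(X)$. For $g,h\in G$ the composite $g_+\circ h_+$ is again a homomorphism of abelian varieties $\Ab^2(X)\to\Ab^2(X)$, and
\[
(g_+\circ h_+)(k)\circ\phi_X^2=g_+(k)\circ\phi_X^2\circ h_*=\phi_X^2\circ g_*\circ h_*=\phi_X^2\circ(gh)_*,
\]
so by the uniqueness in (\ref{f+}) we get $(gh)_+=g_+\circ h_+$. Applying the same reasoning to $\id_X$ gives $(\id_X)_+=\id_{\Ab^2(X)}$, whence $g_+\circ(g^{-1})_+=\id_{\Ab^2(X)}$ and each $g_+$ is an automorphism of $\Ab^2(X)$. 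Thus $g\mapsto g_+$ is a left action of $G$ on $\Ab^2(X)$ by automorphisms of abelian varieties.

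The remaining claims are then immediate. Each $g_+$ is in particular a morphism of $k$-schemes, so the $G$-action on $\Ab^2(X)$ is regular; each $g_+$ is a homomorphism of abelian varieties, so the action is compatible with the group scheme structure (and fixes the origin), i.e.\ $\Ab^2(X)$ is a $G$-abelian variety; and the relation $\phi_X^2\circ g_*=g_+(k)\circ\phi_X^2$ is exactly the $G$-equivariance of $\phi_X^2$. I do not expect a genuine obstacle here: the content of the lemma is already packaged into the functoriality statement (\ref{f+}). The one point where some care is needed is the well-definedness of $g_+$ itself — that $\phi_X^2\circ g_*$ is again a regular homomorphism, which one checks by transporting a family $Z\in Z^2(T\times X)$ along $\id_T\times g$ so that its fibres become $g_*(Z_t)$ and then invoking regularity of $\phi_X^2$ — but this is subsumed in (\ref{f+}), which we are free to cite.
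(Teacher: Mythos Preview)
Your proof is correct and follows essentially the same approach as the paper: both define the action via $g\mapsto g_+$ from (\ref{f+}) and read off regularity, compatibility with the abelian-variety structure, and equivariance of $\phi_X^2$ from the defining relation. Your version is in fact more complete, since you explicitly verify $(gh)_+=g_+\circ h_+$ and $(\id_X)_+=\id_{\Ab^2(X)}$ via the uniqueness clause, whereas the paper leaves the group-homomorphism check implicit.
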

\begin{proof}
For each $g\in G$, since $g$ induces an automorphism of $X$ over $k$, by the universal property of regular homomorphimsms (Theorem \ref{AJ}), the morphism $g_+$ is an automorphism of $\Ab^2(X)$ as abelian varieties over $k$. Hence $G$ regularly acts on $\Ab^2(X)$ by the morphism $G\times \Ab^2(X)\to\Ab^2(X)$ over $k$ which induces the map
\[G\times \Ab^2(X)(k)\to \Ab^2(X)(k);(g,x)\mapsto g_+(x).\] 
Then the group morphism 
\[\phi^2_X:\CH^2(X)_{alg}\to \Ab^2(X)(k)\]
is $G$-equivariant and the $G$-action preserves the origin, i.e., $\Ab^2(X)$ is a $G$-abelian variety.
\end{proof}
\begin{cor}\label{f^+ is equivariant}
Let $f:Y\to X$ be a $G$-equivariant morphism of smooth projective rationally chain connected threefolds over an algebraically closed field $k$ with a regular action of a finite group $G$. Then $f_+$ and $f^+$ are $G$-equivariant.
\end{cor}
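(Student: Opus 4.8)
The plan is to derive the corollary formally from the universal properties recalled in Section~\ref{Murre IJ} together with Lemma~\ref{action on Ab}. Fix the notation: for $g\in G$ let $\sigma^X_g\colon X\to X$ and $\sigma^Y_g\colon Y\to Y$ denote the automorphisms defining the $G$-actions, so that (by Lemma~\ref{action on Ab}) the $G$-action on $\Ab^2(X)$ is $x\mapsto (\sigma^X_g)_+(x)$, the map $\phi^2_X$ satisfies $\phi^2_X\circ(\sigma^X_g)_*=(\sigma^X_g)_+(k)\circ\phi^2_X$, and the $G$-equivariance of $f$ reads $\sigma^X_g\circ f=f\circ\sigma^Y_g$ for all $g$. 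Since $k$ is algebraically closed and abelian varieties are reduced, two morphisms of abelian varieties over $k$ coincide once they agree on $k$-points; and since $\phi^2_X$ and $\phi^2_Y$ are surjective by Theorem~\ref{AJ}, it is enough to check that the two maps in question induce the same homomorphism on $\CH^2(-)_{alg}$ after precomposition with $\phi^2_X$ (resp.\ $\phi^2_Y$).

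First I would treat $f_+$. Unwinding the defining relation $\phi^2_X\circ f_*=f_+(k)\circ\phi^2_Y$ of $f_+$ from~(\ref{f+}) and the relation for $\phi^2_Y$ above, one gets
\[
f_+(k)\circ(\sigma^Y_g)_+(k)\circ\phi^2_Y=\phi^2_X\circ f_*\circ(\sigma^Y_g)_*=\phi^2_X\circ(f\circ\sigma^Y_g)_*,
\]
whereas the other composite gives $(\sigma^X_g)_+(k)\circ f_+(k)\circ\phi^2_Y=\phi^2_X\circ(\sigma^X_g\circ f)_*$, using covariant functoriality of proper pushforward on Chow groups. By equivariance $f\circ\sigma^Y_g=\sigma^X_g\circ f$, so the two agree; hence $f_+\circ(\sigma^Y_g)_+=(\sigma^X_g)_+\circ f_+$ for every $g$, i.e.\ $f_+$ is $G$-equivariant. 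For $f^+$ (the morphism of~(\ref{g+}) applied to $g=f$, so $f^+(k)\circ\phi^2_X=\phi^2_Y\circ f^*$) the same manipulation reduces the identity $f^+\circ(\sigma^X_g)_+=(\sigma^Y_g)_+\circ f^+$ to the claim $f^*\circ(\sigma^X_g)_*=(\sigma^Y_g)_*\circ f^*$ as maps $\CH^2(X)_{alg}\to\CH^2(Y)_{alg}$. Because $\sigma^X_g$ is an isomorphism, $(\sigma^X_g)_*=((\sigma^X_g)^{-1})^*=(\sigma^X_{g^{-1}})^*$, and likewise on $Y$; then contravariant functoriality of pullback gives $f^*\circ(\sigma^X_{g^{-1}})^*=(\sigma^X_{g^{-1}}\circ f)^*$, and $\sigma^X_{g^{-1}}\circ f=f\circ\sigma^Y_{g^{-1}}$ (equivariance applied to $g^{-1}$) yields $(\sigma^Y_{g^{-1}})^*\circ f^*=(\sigma^Y_g)_*\circ f^*$, as required. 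Equivalently, this last identity is proper base change for the commutative square with vertical maps $f$ and horizontal isomorphisms $\sigma^X_g,\sigma^Y_g$.

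There is no genuine obstacle here: the entire proof is bookkeeping with the two universal properties and with variance. The only point needing a moment's thought is the mixed relation $f^*\circ(\sigma^X_g)_*=(\sigma^Y_g)_*\circ f^*$, and it is immediate precisely because the $\sigma_g$ are isomorphisms, which turns the offending pushforward into a pullback. For this reason I would, in the written proof, fix at the very outset the convention that the $G$-action on $\Ab^2(X)$ is $g\cdot(-)=(\sigma_g)_+(-)$ (so that $\phi^2_X$ is $G$-equivariant for the pushforward action on $\CH^2(X)_{alg}$), after which both equivariance statements drop out by the computations above.
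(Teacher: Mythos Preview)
Your proof is correct and follows essentially the same route as the paper: both deduce the result from Lemma~\ref{action on Ab} (equivariance of $\phi^2_X$, $\phi^2_Y$) together with the $G$-equivariance of $f_*$ and $f^*$ on Chow groups. The paper is terser---it invokes bijectivity of $\phi^2$ (available since $X,Y$ are rationally chain connected) to write $f_+(k)=\phi^2_X\circ f_*\circ(\phi^2_Y)^{-1}$ and $f^+(k)=\phi^2_Y\circ f^*\circ(\phi^2_X)^{-1}$ directly, and simply asserts that $f^*$ and $f_*$ are $G$-equivariant---whereas you use only surjectivity and spell out the mixed identity $f^*\circ(\sigma^X_g)_*=(\sigma^Y_g)_*\circ f^*$ explicitly via the observation that $(\sigma_g)_*=(\sigma_{g^{-1}})^*$; this extra care is welcome but does not change the argument.
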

\begin{proof}
By Lemma \ref{action on Ab}, $\phi^2_X$ and $\phi^2_Y$ are $G$-equivariant. Because $f$ is $G$-equivariant, so are $f^*$ and $f_*$. Since these facts hold and we have the equations 
\[f_+(k)=\phi^2_X\circ f_*\circ(\phi^2_Y)^{-1}\]
and
\[f^+(k)=\phi^2_Y\circ f^*\circ(\phi^2_X)^{-1},\]
the morphisms $f_+$ and $f^+$ are $G$-equivariant.
\end{proof}

Let $X$ be a smooth projective rationally chain connected threefold with a regular $G$-action and satisfies Property A. Then $G$ acts on \'etale cohomology, $\CH^2(X)$ and $\CH^2(X)\{\ell\}$  for all prime numbers $\ell$ invertible in $k$. Here $\CH^2(X)\{\ell\}\subset \CH^2(X)$ is the $\ell$-torsion subgroup.
By Lemma \ref{action on Ab}, the finite group $G$ regularly acts on $\Ab^2(X)$. Hence
$G$ acts naturally on $\NS(\Ab^2(X))$ and \'etale cohomology. By Lemma \ref{action on Ab} and \cite[Proposition 3.3]{B79}, the morphisms $\phi_X^2$ and $\lambda^2$ are $G$-equivariant, hence so is $T_{\ell}(\lambda^2\circ(\phi_X^2)^{-1})$ for all prime numbers $\ell$. By construction, for any prime number $\ell$, the $\ell$-adic first Chern class and the cup product map are $G$-equivariant. Therefore the principal polarization $\theta$ is $G$-invariant, i.e., the intermediate Jacobian $\IIJ(X)$ is a $G$-equivariant principally polarized abelian variety over $k$.

By Theorem \ref{BW20, prop2.5} and Lemma \ref{action on Ab}, we obtain the following corollary:

\begin{cor}
Let $X$ be a smooth projective rationally chain connected threefold with a regular $G$-action over $k$. In either of the following cases, the abelian variety $\Ab^2(X)$ has the $G$-equivariant principally polarization $\theta_X$. 
\begin{itemize}
\item[(1)] $\ch k=0$
\item[(2)] $X$ is rational.
\end{itemize}
\end{cor}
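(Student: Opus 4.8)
The plan is to deduce the corollary by combining Theorem~\ref{BW20, prop2.5} with the $G$-equivariance discussion carried out in the paragraph immediately above. The key point, already recorded there, is that \emph{whenever} $X$ satisfies Property A, the principal polarization $\theta\in\NS(\Ab^2(X))$ furnished by Property A is automatically $G$-invariant: Lemma~\ref{action on Ab} endows $\Ab^2(X)$ with a regular $G$-action for which $\phi_X^2$ is $G$-equivariant, Beauville's description \cite[Proposition 3.3]{B79} shows $\lambda^2$ is $G$-equivariant, hence so is $T_\ell(\lambda^2\circ(\phi_X^2)^{-1})$ for every prime $\ell$ invertible in $k$; since the $\ell$-adic first Chern class $c_{1,\ell}$ and the cup-product pairing are $G$-equivariant by construction and $c_{1,\ell}$ is injective, the class $\theta$ characterized by assertion (i) of Property A is forced to be fixed by $G$. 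Thus it suffices to verify that $X$ satisfies Property A in each of the two listed cases, after which $(\Ab^2(X),\theta_X)$ is a $G$-equivariant principally polarized abelian variety.

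In case (2), $X$ is rational, hence rationally chain connected, and Theorem~\ref{BW20, prop2.5}(1) gives Property A directly. In case (1), $\ch k=0$ and $X$ is rationally chain connected; over a field of characteristic zero a smooth projective rationally chain connected variety is in fact rationally connected (Koll\'ar--Miyaoka--Mori), so Theorem~\ref{BW20, prop2.5}(2) applies and $X$ satisfies Property A. In both cases the previous paragraph then yields the desired $G$-equivariant principal polarization $\theta_X$ on $\Ab^2(X)$.

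The only subtlety here is bookkeeping rather than substance: Theorem~\ref{BW20, prop2.5}(2) is phrased for rationally connected threefolds whereas the corollary is phrased for rationally chain connected ones, so in case (1) one must explicitly invoke the characteristic-zero equivalence of the two notions before applying it. No argument beyond assembling Lemma~\ref{action on Ab}, Theorem~\ref{BW20, prop2.5}, and the $G$-equivariance of the Abel--Jacobi map and of the Chern class and cup-product maps is required.
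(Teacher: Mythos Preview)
Your proposal is correct and follows essentially the same approach as the paper, which simply states that the corollary follows from Theorem~\ref{BW20, prop2.5} and Lemma~\ref{action on Ab} together with the preceding paragraph establishing $G$-invariance of $\theta$ whenever Property~A holds. Your only addition is making explicit the passage from rationally chain connected to rationally connected in characteristic zero (needed to invoke Theorem~\ref{BW20, prop2.5}(2)), a step the paper leaves implicit.
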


\begin{dfn}
Let $X$ be a smooth projective rationally chain connected threefold with a regular $G$-action over $k$. Assume that $X$ satisfies Property A. Then we call $\IIJ(X)=(\Ab^2(X), \theta_X)$ the $G$-equivariant intermediate Jacobian.
\end{dfn}

\subsection{Chow group schemes}\label{section 3.3}
\begin{dfn}(\cite[Section 2]{1})
Let $X$ be a smooth projective rationally chain connected threefold over $k$. For each $\gamma\in \NS^2(X)=\CH^2(X)/\CH^2(X)_{alg}$, represented by a $1$-cycle $Z_0$, the bijections 
\[(\CH^2(X))^{\gamma}:=[Z_0]+\CH^2(X)_{alg}\overset{\text{transitive}}{\cong}\CH^2(X)_{alg}\overset{\phi^2_X}{\cong}\Ab^2(X)(k)\]
define a scheme structure on $(\CH^2(X))^{\gamma}$, denoted by $(\CCH^2_{X/k})^{\gamma}$. The first bijection is unique up to unique translation by an element in $\CH^2(X)_{alg}$. Then these induce a group scheme structure on $\CH^2(X)$, and we denote this scheme by $\CCH^2_{X/k}$. 
\end{dfn}
The following lemma asserts that a regular $G$-action on a smooth projective rationally connected threefold $X$ induces a regular $G$-action on $\CCH^2_{X/k}$ and $(\CCH^2_{X/k})^0$.

\begin{lem}(\cite[Lemma 2.1 and Lemma 2.2]{1})\label{action on CH}
Let $X$ be a smooth projective rationally chain connected threefold over $k$ with a regular action of a finite group $G$.
\begin{itemize}
\item[$(1)$] The $G$-action on $\CH^2(X)_{alg}$ induces a regular $G$-action on $(\CCH^2_{X/k})^{0}$ by the universal property of regular homomorphisms. Moreover, the $G$-variety $(\CCH^2_{X/k})^{0}$ is a $G$-abelian variety. 
\item[$(2)$] The $G$-action on $\CH^2(X)$ induces a regular $G$-action on $\CCH^2_{X/k}$.
\end{itemize}
\end{lem}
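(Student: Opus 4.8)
The plan is to deduce part~(1) immediately from Lemma~\ref{action on Ab} and to establish part~(2) by analyzing how $G$ permutes the connected components of $\CCH^2_{X/k}$.

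For part~(1): by the definition of the Chow group scheme, the component $(\CCH^2_{X/k})^0$ is nothing but the scheme $\Ab^2(X)$. Indeed, taking the representing $1$-cycle $Z_0=0$ for the class $\gamma=0$, the identifying bijection $(\CH^2(X))^0=\CH^2(X)_{alg}\overset{\phi_X^2}{\to}\Ab^2(X)(k)$ carries no translation ambiguity and is exactly $\phi_X^2$. Hence the assertion of part~(1) --- that the $G$-action on $\CH^2(X)_{alg}$ transported along $\phi_X^2$ is regular and makes $(\CCH^2_{X/k})^0$ a $G$-abelian variety --- is precisely the content of Lemma~\ref{action on Ab}.

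For part~(2): write $\CCH^2_{X/k}=\coprod_{\gamma\in\NS^2(X)}(\CCH^2_{X/k})^\gamma$ for the decomposition into connected components. Each $g\in G$ acts on $\CH^2(X)$ as a group automorphism preserving algebraic equivalence, so it acts on $\NS^2(X)$ and the underlying set map carries $(\CH^2(X))^\gamma$ bijectively onto $(\CH^2(X))^{g\cdot\gamma}$. The first point is that this bijection is an \emph{isomorphism of schemes} $(\CCH^2_{X/k})^\gamma\overset{\sim}{\to}(\CCH^2_{X/k})^{g\cdot\gamma}$. To see this, choose a $1$-cycle $Z_0$ representing $\gamma$; then $g_*Z_0$ represents $g\cdot\gamma$, and we may use $Z_0$ and $g_*Z_0$ to define the scheme structures on the two components (a different choice of representing cycle alters the identification with $\Ab^2(X)$ by a translation, which is an automorphism of the abelian variety, so both the scheme structures and the computation below are independent of the choices). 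In these coordinates, for $a\in\CH^2(X)_{alg}$ the class $[Z_0]+a$ is sent to $[g_*Z_0]+g\cdot a$, i.e. $\phi_X^2(a)\mapsto\phi_X^2(g\cdot a)$; by the $G$-equivariance of $\phi_X^2$ from Lemma~\ref{action on Ab} this equals $g_+(\phi_X^2(a))$. Thus the bijection is, in these coordinates, the automorphism $g_+\colon\Ab^2(X)\to\Ab^2(X)$ of the abelian variety, in particular a morphism of schemes; its inverse is induced by $g^{-1}$, so it is an isomorphism.

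Assembling these isomorphisms over all $\gamma$ yields, for each $g\in G$, an automorphism $\psi_g$ of the scheme $\CCH^2_{X/k}$. Since $G$ is finite, hence a discrete group scheme, a regular $G$-action is the same as such a family of automorphisms satisfying $\psi_e=\id$ and $\psi_g\circ\psi_h=\psi_{gh}$. These identities hold on underlying sets because $g\mapsto(g\cdot-)$ is a group action on $\CH^2(X)$, and since $\CCH^2_{X/k}$ is reduced (a disjoint union of torsors under the abelian variety $\Ab^2(X)$), separated, and locally of finite type over the algebraically closed field $k$, a morphism between its open-and-closed subschemes is determined by its effect on closed points; hence the identities hold as morphisms. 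This produces the regular $G$-action on $\CCH^2_{X/k}$, completing part~(2). The main point requiring care is exactly the verification that $G$ permutes the components by scheme isomorphisms: the only subtlety, the translation ambiguity in the scheme structure of each $(\CCH^2_{X/k})^\gamma$, is dispatched using that translations are automorphisms of $\Ab^2(X)$ together with the $G$-equivariance of $\phi_X^2$ supplied by Lemma~\ref{action on Ab}.
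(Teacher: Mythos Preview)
Your proof is correct and follows essentially the same approach as the paper: for part~(1) you invoke Lemma~\ref{action on Ab} directly, and for part~(2) you show that on each component the map $[Z]\mapsto[gZ]$ is, after the translation identifications with $\Ab^2(X)$, the automorphism $g_+$, hence algebraic---this is exactly the content of the paper's commutative diagram with the vertical ``transitive'' arrows. Your treatment is in fact slightly more explicit than the paper's on two points (the independence from the choice of representing cycle $Z_0$, and the verification of the group-action identities $\psi_g\circ\psi_h=\psi_{gh}$ via reducedness), but the underlying argument is the same.
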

\begin{proof}
\begin{itemize}
\item[$(1)$]
Let us recall that the regular $G$-action on $\Ab^2(X)$ is given by the map
\[G\times \Ab^2(X)(k)\to \Ab^2(X)(k);(g,x)\mapsto g_+(k)(x),\] 
and, for any $g \in G$, the following diagram is commutative:
\[
  \begin{CD}
     \CH^2(X)_{alg} @>{g_*}>> \CH^2(X)_{alg} \\
  @V{\phi_X^2}VV    @V{\phi_X^2}VV \\
     \Ab^2(X)(k)   @>{g_+(k)}>>  \Ab^2(X)(k).
  \end{CD}
\]
By the facts above and Lemma \ref{action on Ab}, the map
\[G\times (\CCH^2_{X/k})^{0}(k)\to (\CCH^2_{X/k})^{0}(k);(g,Z)\mapsto gZ\] 
yields a regular $G$-action on $(\CCH^2_{X/k})^{0}$ and the $G$-action preserves the origin, i.e. $(\CCH^2_{X/k})^{0}$ is a $G$-abelian variety.
\item[$(2)$]
For each $g\in G$ and $\gamma \in \NS^2(X)$, let $g$ be the map 
\[(\CCH^2_{X/k})^{\gamma}(k)\to(\CCH^2_{X/k})^{g\gamma}(k);[Z]\to [gZ].\]
Then the following diagram is commutative:
\[
  \begin{CD}
     (\CCH^2_{X/k})^{\gamma}(k) @>{g}>> (\CCH^2_{X/k})^{g\gamma}(k) \\
  @V{\text{transitive}}VV    @V{\text{transitive}}VV \\
     (\CCH^2_{X/k})^0(k)   @>{g(k)}>>  (\CCH^2_{X/k})^0(k).
  \end{CD}
\]
Hence, the map $g:(\CCH^2_{X/k})^{\gamma}(k)\to(\CCH^2_{X/k})^{g\gamma}(k)$ is algebraic and we get a morphism
$g:(\CCH^2_{X/k})^{\gamma}\to(\CCH^2_{X/k})^{g\gamma}$ which induces the map
\[g(k):(\CCH^2_{X/k})^{\gamma}(k)\to(\CCH^2_{X/k})^{g\gamma}(k);[Z]\to [gZ].\]
Glueing them, we get a morphism 
\[\rho:G\times \CCH^2_{X/k}\to \CCH^2_{X/k}.\]
The restriction of $\rho$ to $\{g\}\times (\CCH^2_{X/k})^{\gamma}$ is the morphism $\{g\}\times (\CCH^2_{X/k})^{\gamma}\to(\CCH^2_{X/k})^{g\gamma}$ which induces the map
\[g|_{\{g\}\times (\CCH^2_{X/k})^{\gamma}}(k):(\CCH^2_{X/k})^{\gamma}(k)\to(\CCH^2_{X/k})^{g\gamma}(k);[Z]\to [gZ].\]
Therefore $\rho$ is a $G$-action on $\CCH^2_{X/k}$ and the $G$-action $\rho$ is regular.
\end{itemize}
\end{proof}
If $X$ satisfies Property A, the $G$-abelian variety $(\CCH^2_{X/k})^{0}$ has a $G$-invariant principal polarization induced by the $G$-equivariant intermediate Jacobian $\IIJ(X)$, i.e., $(\CCH^2_{X/k})^{0}$ is a $G$-equivariant principally polarized abelian variety over $k$.
We identify $(\CCH^2_{X/k})^{0}$ with $\IIJ(X)$ as $G$-equivariant principally polarized abelian varieties over $k$.

\subsection{Obstructions of linearizability}\label{section 3.4}
In this section, assume that $k$ is an algebraically closed field of characteristic zero. By the assumption, we see that the intermediate Jacobian of rationally connected threefold is defined. This fact is necessary to prove Lemma \ref{BW20 lemma 2.9} and \ref{before blowup}. Additionally, the equivariant functorial weak factorization theorem holds over an algebraically closed field of characteristic zero(\cite[Theorem 5-2-1]{6} or \cite[Theorem  1.3.3 (1)]{5}), and this theorem implies Theorem \ref{important lemma}.
The relative Picard functor of a smooth projective curve $C$ over $k$ is represented by a separated, locally of finite type scheme over $k$. This scheme is denoted by $\PPic_{C/k}$. Let $\NS(C)$ be the N\'eron-Severi group of $C$. For a class $n\in \NS(C)$, the connected component corresponding to $n$ is represented by $\PPic^n_{C/k}$. If a finite group $G$ acts on $C$, the group $G$ acts on $\PPic_{C/k}$, $\PPic^0_{C/k}$, and $\NS(C)$. 
\begin{dfn}(\cite[Definition 3.4]{1})
Let $X$ be a smooth projective rationally connected threefold over $k$ with a regular action of a finite group $G$. Then the $G$-equivariant IJT-obstruction vanishes for $X$ if there exists a smooth projective curve $C$ with a regular $G$-action such that, for any $G$-invariant class $\gamma\in \NS^2(X)$, we have a $G$-equivariant isomorphism
\[(\CCH^2_{X/k})^{\gamma}\Gcong \PPic^n_{C/k}\]
for some $G$-invariant class $n\in \NS(C)$. Moreover, when $\gamma=0$, we have a $G$-equivariant isomorphism
\[(\CCH^2_{X/k})^{0}\Gcong \PPic^0_{C/k}\]
as $G$-equivariant principally polarized abelian varieties.
\end{dfn}
The following theorem asserts that the IJT-obstruction is an obstruction to projective linearizability.
\begin{thm}(\cite[Theorem 3.3]{1})\label{IJT}
Let $X$ be a smooth projective rational threefold over $k$ with a regular and projectively linearizable action of a finite group $G$. 
Then there exists a smooth projective curve $C$ with a regular $G$-action such that for any $G$-invariant class $\gamma\in \NS^2(X)$, we have a $G$-equivariant isomorphism 
\[(\CCH^2_{X/k})^{\gamma}\Gcong \PPic^n_{C/k}\]
for some $G$-invariant class $n$. When $\gamma=0$, we get a $G$-equivariant isomorphism
\[(\CCH^2_{X/k})^{0}\Gcong \PPic^0_{C/k}\]
as $G$-equivariant principally polarized abelian varieties.

In particular, the $G$-equivariant IJT-obstruction vanishes for $X$.
\end{thm}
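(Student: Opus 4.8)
The plan is to transport the conclusion along an equivariant factorization of a projective linearization of $X$ into a chain of blow-ups and blow-downs, using the equivariant behaviour of the Chow group scheme $\CCH^2$ and of the intermediate Jacobian under blow-ups, and then to recover the answer for $X$ by cancellation in the monoid of $G$-equivariant principally polarized abelian varieties. First, since $X$ is projectively linearizable, I would fix a regular $G$-action on $\PP^3$ and a $G$-equivariant birational map $X\dashrightarrow\PP^3$. As $\ch k=0$, the equivariant functorial weak factorization theorem (Theorem~\ref{important lemma}, from \cite{6} or \cite{5}) yields a chain
\[X=Y_0\dashrightarrow Y_1\dashrightarrow\cdots\dashrightarrow Y_N=\PP^3\]
of smooth projective $G$-threefolds, each $Y_i$ rational (being birational to $\PP^3$), so that $\IIJ(Y_i)$ and $\CCH^2_{Y_i/k}$ are defined by Theorem~\ref{BW20, prop2.5}; every $Y_i\dashrightarrow Y_{i+1}$ is, $G$-equivariantly, either the blow-up of $Y_i$ along a smooth $G$-invariant centre $Z_i$, or the inverse of such a blow-up of $Y_{i+1}$. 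Each $Z_i$ is smooth and $G$-invariant, hence a disjoint union of points permuted by $G$, or a disjoint union of smooth projective curves permuted by $G$; write $B$ (resp. $D$) for the set of steps at which, read from $X$ towards $\PP^3$, one blows up (resp. blows down) a centre with $\dim Z_i=1$.

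Next I would record how $\CCH^2$ and $\IIJ$ transform under the blow-up $Y'=\mathrm{Bl}_Z Y$ of a smooth projective rational threefold along a smooth $G$-invariant centre $Z$. The equivariant refinements of the Benoist--Wittenberg blow-up formula (Lemma~\ref{BW20 lemma 2.9} and Lemma~\ref{before blowup}), with every morphism involved $G$-equivariant by Corollary~\ref{f^+ is equivariant} and Lemma~\ref{action on CH}, give the following.
\begin{itemize}
\item If $\dim Z=0$, then after absorbing the extra $\NS^2$-classes produced by the exceptional $\PP^2$'s there is a $G$-equivariant isomorphism $(\CCH^2_{Y'/k})^{\gamma'}\Gcong(\CCH^2_{Y/k})^{\gamma}$ for a corresponding $\gamma$; in particular $\IIJ(Y')\Gcong\IIJ(Y)$.
\item If $\dim Z=1$, then there are $G$-equivariant isomorphisms $(\CCH^2_{Y'/k})^{\gamma'}\Gcong(\CCH^2_{Y/k})^{\gamma}\times\PPic^n_{Z/k}$ for corresponding $\gamma$ and $n$, and on the identity components $\IIJ(Y')\Gcong\IIJ(Y)\times\JJ(Z)$ as $G$-equivariant principally polarized abelian varieties.
\end{itemize}

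Applying this along the chain above, and using that $\IIJ(\PP^3)$ is trivial and that $(\CCH^2_{\PP^3/k})^{\gamma}$ is a single reduced point for every $\gamma$, a straightforward induction on $N$ gives a $G$-equivariant isomorphism
\[\IIJ(X)\times\prod_{i\in B}\JJ(Z_i)\;\Gcong\;\prod_{i\in D}\JJ(Z_i)\]
of $G$-equivariant principally polarized abelian varieties, and, propagating the full group schemes $\CCH^2_{\bullet/k}$ rather than only their identity components, a corresponding identity of $\CCH^2$-schemes. Each $\JJ(Z_i)$ is the product, over the $G$-orbits of connected components of $Z_i$, of $G$-equivariant Jacobians of $G$-irreducible $G$-curves, hence a product of irreducible $G$-equivariant principally polarized abelian varieties, each of which is the $G$-equivariant Jacobian of a $G$-irreducible $G$-curve (cf. \cite{1}). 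By the uniqueness of the decomposition into irreducibles (Corollary~\ref{decomposition of ppav}, built on Lemma~\ref{sub ppav}), the multiset of irreducible factors of $\prod_{i\in B}\JJ(Z_i)$ is contained in that of $\prod_{i\in D}\JJ(Z_i)$, and the complement is exactly the decomposition of $\IIJ(X)$; taking $C$ to be the disjoint union of the corresponding $G$-irreducible $G$-curves, enlarged by the curve centres accounting for the non-identity components, then produces a single smooth projective $G$-curve $C$ together with $G$-equivariant isomorphisms $(\CCH^2_{X/k})^{\gamma}\Gcong\PPic^n_{C/k}$ for every $G$-invariant $\gamma\in\NS^2(X)$ with $n$ $G$-invariant, which for $\gamma=0$ specialize to $\IIJ(X)=(\CCH^2_{X/k})^{0}\Gcong\PPic^0_{C/k}$ as $G$-equivariant principally polarized abelian varieties. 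The vanishing of the $G$-equivariant IJT-obstruction for $X$ is then a restatement.

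The main obstacle is the cancellation in the previous paragraph. Telescoping only yields an isomorphism between \emph{products} of $G$-equivariant principally polarized abelian varieties (and of $\CCH^2$-schemes), and to isolate $\IIJ(X)$ one has to divide out the $G$-equivariant Jacobians introduced going from $X$ to $\PP^3$; cancellation in this monoid genuinely fails without the uniqueness of the irreducible decomposition, which is exactly what Corollary~\ref{decomposition of ppav} provides, and it is the fact that every irreducible factor that can occur is itself the $G$-equivariant Jacobian of a $G$-irreducible $G$-curve that allows one to reassemble the outcome as $\PPic_{C/k}$ for an honest $G$-curve. The subtlest point is the torsor-level refinement needed for $\gamma\neq 0$: one must carry the full $\CCH^2$-schemes, not merely their polarized identity components, through the induction and perform the analogous cancellation there, which in turn relies on the Benoist--Wittenberg blow-up isomorphism and the induced polarizations being $G$-equivariant on the nose --- secured by the functoriality set up in Section~\ref{section 3.2} (Corollary~\ref{f^+ is equivariant}) together with the $G$-invariance of the polarization established in Section~\ref{section 3.2}.
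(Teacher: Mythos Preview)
Your approach is essentially the paper's: the paper packages your entire telescoping induction as Theorem~\ref{important lemma} (whose proof is exactly the weak factorization plus the blow-up formulas of Lemmas~\ref{BW20 lemma 2.9} and~\ref{before blowup} that you invoke), and then the proof of Theorem~\ref{IJT} is precisely the cancellation you describe via Corollary~\ref{decomposition of ppav}.

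There is, however, one ingredient you omit in the cancellation step: the Torelli theorem for curves. After telescoping you obtain a $G$-equivariant isomorphism $(\CCH^2_{X/k})^{\gamma}\times\PPic^{n}_{C/k}\Gcong\PPic^{m}_{D/k}$ together with its identity-component version as $G$-ppavs. Corollary~\ref{decomposition of ppav} lets you cancel at the level of $G$-ppavs, but to pass to the torsor statement for $\gamma\neq 0$ you must know that the factor $\JJ(C)$ inside $\JJ(D)$ comes from an actual $G$-equivariant decomposition $D=D'\sqcup C$ of curves; only then does $\PPic^{m}_{D/k}$ split as $\PPic^{m'}_{D'/k}\times\PPic^{m''}_{C/k}$ in a way compatible with the ``transitives preserve direct products'' clause of Theorem~\ref{important lemma}, allowing you to cancel the $\PPic_{C/k}$ factor on both sides. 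This is exactly how the paper argues: Corollary~\ref{decomposition of ppav} together with Torelli yields $D=D'\sqcup C$, and then the commutative diagram with product-preserving transitives gives $(\CCH^2_{X/k})^{\gamma}\Gcong\PPic^{m'}_{D'/k}$. Your phrase ``enlarged by the curve centres accounting for the non-identity components'' does not substitute for this step.
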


To prove Theorem \ref{IJT}, we give a relation between the equivariant intermediate Jacobian of a smooth projective rational threefold with a regular action of a finite group and the equivariant intermediate Jacobian of an equivariant blow-up or blow-down of it.

\begin{lem}(\cite[Lemma 2.9]{3})\label{BW20 lemma 2.9}
Let $f:Y\to X$ be a $G$-equivariant birational morphism of smooth projective rationally chain connected threefolds over $k$ with a regular action of a finite group $G$. Then the morphism, given by \eqref{f+}, 
\[f^+:\IIJ(X)\inj\IIJ(Y)\]
is a $G$-equivariant immersion of $G$-equivariant principally polarized abelian varieties.
\end{lem}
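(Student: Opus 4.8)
The plan is to isolate the $G$-equivariance, which is essentially automatic, and then verify the two purely non-equivariant assertions hidden in the phrase ``immersion of $G$-equivariant principally polarized abelian varieties'': that the underlying homomorphism $f^{+}$ is a closed immersion, and that $(f^{+})^{*}\theta_{Y}=\theta_{X}$ in $\NS(\Ab^{2}(X))$. First, since $\ch k=0$ and $X,Y$ are smooth projective rationally chain connected threefolds, they are rationally connected, hence satisfy Property A by Theorem \ref{BW20, prop2.5}(2); thus $\IIJ(X)$ and $\IIJ(Y)$ are defined, and by the discussion in Section \ref{section 3.2} they are $G$-equivariant principally polarized abelian varieties. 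Since $f$ is a morphism of varieties, $f^{+}\colon\Ab^{2}(X)\to\Ab^{2}(Y)$ is a homomorphism of abelian varieties, $G$-equivariant by Corollary \ref{f^+ is equivariant}. So it remains only to check the two underlying properties above.

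For the immersion property I would use the projection formula on Chow groups: since $X$ is smooth and $f\colon Y\to X$ is proper birational, $f_{*}f^{*}=\id$ on $\CH^{2}(X)$, hence on $\CH^{2}(X)_{alg}$. Feeding this into the defining relations $f^{+}(k)\circ\phi^{2}_{X}=\phi^{2}_{Y}\circ f^{*}$ and $\phi^{2}_{X}\circ f_{*}=f_{+}(k)\circ\phi^{2}_{Y}$ (see (\ref{g+}) and (\ref{f+})) and using the surjectivity of $\phi^{2}_{X}$ from Theorem \ref{AJ}, one obtains $f_{+}\circ f^{+}=\id_{\Ab^{2}(X)}$. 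A homomorphism of abelian varieties admitting a left inverse has trivial kernel, hence is a closed immersion; and this closed immersion is $G$-equivariant by Corollary \ref{f^+ is equivariant}.

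For the equality of polarizations I would argue cohomologically through part (i) of Property A. The maps $\phi^{2}$, Bloch's map $\lambda^{2}$ and the $\ell$-adic Tate functor are functorial, so, as recorded in Section \ref{section 3.2}, $f^{+}$ is realized $\ell$-adically by the pullback $f^{*}$ on $H^{3}_{\text{\'{e}t}}(-,\Z_{\ell}(2))$ (up to the canonical dualities and Tate twists); and by the projection formula in $\ell$-adic cohomology, $f_{*}(f^{*}a\cup f^{*}b)=a\cup f_{*}f^{*}b=a\cup b$, so the cup-product pairing on $H^{3}_{\text{\'{e}t}}(X,\Z_{\ell}(2))$ is the $f^{*}$-pullback of the one on $H^{3}_{\text{\'{e}t}}(Y,\Z_{\ell}(2))$. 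Transporting through the identifications of Property A, $c_{1,\ell}\big((f^{+})^{*}(-\theta_{Y})\big)$ then corresponds to the cup-product pairing on $H^{3}_{\text{\'{e}t}}(X,\Z_{\ell}(2))$, which is $c_{1,\ell}(-\theta_{X})$; since $c_{1,\ell}$ is injective, $(f^{+})^{*}\theta_{Y}=\theta_{X}$. (Once $f^{+}$ is known to be $G$-equivariant, both underlying statements are also precisely \cite[Lemma 2.9]{3}, whose proof does not use the base field, so one may instead simply cite it.)

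The step I expect to require the most care is the polarization identity: one must make the diagram in Property A\,(i) commute after applying $f$ — pinning down the compatibility of the $\ell$-adic realization of the algebraic representative with $f^{*}$, and bookkeeping the dualities and Tate twists — while checking that every arrow involved ($c_{1,\ell}$, the cup-product map, $T_{\ell}(\lambda^{2}\circ(\phi^{2})^{-1})$ and $f^{*}$) is $G$-equivariant, so that the resulting equality $(f^{+})^{*}\theta_{Y}=\theta_{X}$ holds as an identity of $G$-equivariant principally polarized abelian varieties. By contrast, the immersion step is formal once the projection formula and Theorem \ref{AJ} are in hand.
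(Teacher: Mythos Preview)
Your proposal is correct and aligns with the paper's approach: the paper's proof is the one-line citation ``By \cite[Lemma 2.9]{3} and Corollary \ref{f^+ is equivariant}, the morphism $f^+$ is a $G$-equivariant immersion of $G$-equivariant principally polarized abelian varieties,'' which is exactly the shortcut you flag in your parenthetical remark. Your longer argument simply unpacks the content of \cite[Lemma 2.9]{3} (projection formula for the immersion, Property~A and the $\ell$-adic projection formula for the polarization identity), so there is no genuine divergence.
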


\begin{proof}
For any smooth projective rationally chain connected variety $V$ over $k$, the group $\CH_0(V)_{\Q}$ is supported in dimension 1. 
By \cite[Lemma 2.9]{3} and Lemma \ref{f^+ is equivariant}, the morphism $f^+$ is a $G$-equivariant immersion of $G$-equivariant principally polarized abelian varieties.
\end{proof}

\begin{lem}(\cite[Lemma 2.10]{3})\label{before blowup}
Let $X$ be a smooth projective rationally chain connected threefold over $k$ with a regular action of a finite group $G$ and $f:Y\to X$ be the $G$-equivariant blow-up of a smooth closed subscheme $C\subset X$ which is $G$-irreducible and has dimension $1$. 

Then there exist a $G$-equivariant injection
\[z^*:\Pic(C)\inj \CH^2(Y)\]
and a $G$-equivariant immersion 
\[z^+:\JJ(C)\inj \IIJ(Y)\]
of $G$-equivariant principally polarized abelian varieties such that these satisfy the following properties.
\begin{itemize}
\item[$(1)$]
The group homomorphism
\[(f^*,z^*):\CH^2(X)\times\Pic(C) \to \CH^2(Y)\]
is a $G$-equivariant isomorphism and the restriction of $(f^*,z^*)$ to algebraically trivial cycles
\[(f^*,z^*):\CH^2(X)_{alg}\times\Pic^0(C) \to \CH^2(Y)_{alg}\]
is a $G$-equivariant isomorphism.
\item[$(2)$]
The morphism
\[(f^+,z^+):\IIJ(X)\times \JJ(C)\to \IIJ(Y)\]
is a $G$-equivariant isomorphism of $G$-equivariant principally polarized abelian varieties.
\item[$(3)$] The following diagram is commutative:
\[
  \begin{CD}
     \CH^2(X)_{alg}\times\Pic^0(C) @>{(f^*,z^*)}>> \CH^2(Y)_{alg} \\
  @V{\phi_X^2\times \phi_C^1}VV    @V{\phi_Y^2}VV \\
     \IIJ(X)(k)\times \JJ(C)(k)   @>{(f^+,z^+)(k)}>>  \IIJ(Y)(k).
  \end{CD}
\]
\end{itemize}
\end{lem}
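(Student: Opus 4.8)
The non-equivariant content of the statement — the existence of the maps $z^*$ and $z^+$, properties $(1)$--$(3)$ with the group action removed, and the fact that $z^+$ is an immersion of principally polarized abelian varieties — is exactly \cite[Lemma 2.10]{3}. So the plan is to recall the explicit construction of these maps from \cite{3} and to check that every map occurring in it is $G$-equivariant, using that the $G$-action on $X$ lifts canonically to the blow-up.

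First I would note that, since $C\subset X$ is a $G$-invariant smooth closed subscheme, the blow-up $f\colon Y\to X$ carries a unique regular $G$-action making $f$ $G$-equivariant; the exceptional divisor $E=\PP(\mathcal{N}_{C/X})\subset Y$ is then $G$-invariant, the structure morphism $p\colon E\to C$ is $G$-equivariant, and the closed immersion $i\colon E\hookrightarrow Y$ is $G$-equivariant. Benoist--Wittenberg's map $z^*$ factors (up to their normalization) as $z^*=i_*\circ p^*\colon \Pic(C)=\CH^1(C)\to\CH^2(Y)$, and hence is a composition of $G$-equivariant operations ($p^*$ is $G$-equivariant because $p$ is, $i_*$ is $G$-equivariant because $i$ is), so $z^*$ is a $G$-equivariant group homomorphism which carries $\Pic^0(C)$ into $\CH^2(Y)_{alg}$ since $p^*$ and $i_*$ preserve algebraic triviality. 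Likewise $f^*$ is $G$-equivariant by functoriality of the Chow pullback along the $G$-equivariant $f$. Since $(f^*,z^*)$ and its restriction to algebraically trivial cycles are group isomorphisms by \cite[Lemma 2.10]{3} and are $G$-equivariant, they are $G$-equivariant isomorphisms; this gives $(1)$, and in particular the injectivity of $z^*$.

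For $(2)$ and $(3)$: restricting $z^*$ to $\Pic^0(C)$ and composing with the Abel--Jacobi maps and their inverses gives a map $\phi_Y^2\circ z^*\circ(\phi_C^1)^{-1}\colon \JJ(C)(k)\to\Ab^2(Y)(k)$, which by \cite[Lemma 2.10]{3} is induced by a morphism of abelian varieties $z^+\colon \JJ(C)\to\Ab^2(Y)$ (a correspondence-induced morphism) fitting into the commutative square $(3)$ and making $(f^+,z^+)\colon\IIJ(X)\times\JJ(C)\to\IIJ(Y)$ an isomorphism of principally polarized abelian varieties. To upgrade this to the equivariant setting I would invoke that $\phi_X^2$ and $\phi_Y^2$ are $G$-equivariant (Lemma \ref{action on Ab}), that $\phi_C^1$ is $G$-equivariant (Section \ref{section 2}), that $f^+$ is $G$-equivariant (Corollary \ref{f^+ is equivariant}), and that $z^*$ is $G$-equivariant by the previous step; then $z^+(k)=\phi_Y^2\circ z^*\circ(\phi_C^1)^{-1}$ is $G$-equivariant on $k$-points, so $z^+$ is a $G$-equivariant morphism of $G$-abelian varieties since $k$ is algebraically closed, and $(3)$ becomes a commutative square of $G$-equivariant morphisms. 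Finally, the principal polarizations $\theta_Y$ on $\IIJ(Y)$ and $\theta_C$ on $\JJ(C)$ are $G$-invariant (Section \ref{section 3.2}, Section \ref{section 2}), and $(f^+,z^+)^*\theta_Y=p_{\IIJ(X)}^*\theta_X+p_{\JJ(C)}^*\theta_C$ by $(2)$; hence $(f^+,z^+)$ is a $G$-equivariant isomorphism of $G$-equivariant principally polarized abelian varieties, and restricting to the $\JJ(C)$-factor shows $z^+$ is a $G$-equivariant immersion with $(z^+)^*\theta_Y=\theta_C$, i.e.\ a $G$-equivariant immersion of $G$-ppavs.

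The work here is bookkeeping rather than a new idea: the single point needing care is the precise form of the Benoist--Wittenberg maps $z^*$ and $z^+$ and the verification that each building block — $p^*$, $i_*$, $f^*$, $f_*$, the Abel--Jacobi maps, and the identification $\CH^2(Y)_{alg}\cong\Ab^2(Y)(k)$ — is $G$-equivariant. The algebraicity of $\JJ(C)(k)\to\Ab^2(Y)(k)$ needed for $z^+$ to exist as a morphism is already supplied by \cite[Lemma 2.10]{3}, and once it is known, its $G$-equivariance follows automatically from the $G$-equivariance on $k$-points. Nothing beyond Lemma \ref{action on Ab}, Corollary \ref{f^+ is equivariant}, and the non-equivariant \cite[Lemma 2.10]{3} is required.
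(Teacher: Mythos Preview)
Your proposal is correct and follows essentially the same route as the paper: both define $z^*$ via the exceptional divisor (the paper packages it as the correspondence $z=(i,f|_E)_*E\in\CH^2(Y\times C)$, which unwinds to your $i_*\circ p^*$), invoke \cite[Lemma 2.10]{3} for the non-equivariant statements, and then check $G$-equivariance of $z^*$ and $z^+$ using that $E$, $i$, $f|_E$ are $G$-equivariant together with Lemma \ref{action on Ab} and Corollary \ref{f^+ is equivariant}. The only cosmetic difference is that the paper phrases the existence of $z^+$ via the universal property (Theorem \ref{AJ}) rather than writing $z^+(k)=\phi_Y^2\circ z^*\circ(\phi_C^1)^{-1}$ directly, but this is the same map.
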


\begin{proof}
Let $E$ be the $G$-invariant exceptional divisor of the blow-up $f$ and $i:E\inj Y$ be the $G$-equivariant natural immersion, and the $G$-invariant cycle $z\in \CH^2(Y\times C)$ be $(i, f|_E)_*E$. We define $z^*:\Pic(C)\to\CH^2(Y)$ to be the correspondence defined by $z$. 
Because $C$ is a smooth center of the blow-up $f$ of the smooth projective threefold $X$, $C$ is a smooth closed codimension-2 subscheme $X$. 
Then $(f^*, z^*):\CH^2(X)\times\Pic(C) \to \CH^2(Y)$ and the restriction of $(f^*,z^*)$ to algebraically trivial cycles $(f^*,z^*):\CH^2(X)_{alg}\times\Pic^0(C) \to  \CH^2(Y)_{alg}$ are isomorphisms. By Theorem \ref{AJ} for $Y$, we have a homomorphism $z^+:\JJ(C)\to \Ab^2(Y)$ of abelian varieties such that $\phi^2_Y\circ z^*|_{\Pic^0(C)}=z^+\circ\phi^1_{C}$, where $\phi^1_{C}:\Pic^0(C)\to \JJ(C)(k)$ is the natural morphism (See \cite[Lemma 2.10]{3}). Then, by \cite[Lemma 2.10]{3}, the homomorphism $(f^+,z^+):\IIJ(X)\times \JJ(C)\to \IIJ(Y)$ is an isomorphism of principally polarized abelian varieties. Furthermore the following diagram is commutative:
\[
  \begin{CD}
     \CH^2(X)_{alg}\times\Pic^0(C) @>{(f^*,z^*)}>> \CH^2(Y)_{alg} \\
  @V{\phi_X^2\times \phi_C^1}VV    @V{\phi_Y^2}VV \\
     \IIJ(X)(k)\times \JJ(C)(k)  @>{(f^+,z^+)(k)}>>  \IIJ(Y)(k).
  \end{CD}
\]
Here, the principal polarization $\theta_Y$ of $\IIJ(Y)$ is given by $\theta_Y=(f^+,z^+)_*(\theta_X,\theta_C)$. See \cite[Lemma 2.10]{3} for details and notations. 
Since $E$ is $G$-invariant and the morphisms $i$ and $f$ are $G$-equivariant, $z$ is $G$-invariant. Hence $z^*$ is $G$-equivariant. Because $\phi_Y^2,\phi_C^1$ and $z^*$ are $G$-equivariant, so is $z^+$. Since the morphisms $(f^*,z^*)$, $\phi_X^2\times \phi_C^1$ and $\phi_Y^2$ are $G$-equivariant, so is $(f^+,z^+)$. Therefore $(f^+,z^+)$ is a $G$-equivariant isomorphism of $G$-equivariant principally polarized abelian varieties.
\end{proof}

\begin{thm}\label{important lemma}(\cite[Theorem 1.1 and Theorem 3.3]{1})
Let $X$ be smooth projective rationally connected threefold with a regular action of a finite group $G$. 
Assume that a smooth projective rationally connected $3$-dimensional $G$-variety $Y$ is $G$-equivariantly birational to $X$. Then there exist smooth projective $G$-curves $C_1$ and $C_2$ such that, for any $G$-invariant elements $n\in \NS(C_1)$ and $\gamma_1\in \NS^2(Y)$, we have a $G$-equivariant morphism 
\[(\CCH^2_{Y/k})^{\gamma_1}\times \PPic^n_{C_1/k}\cong (\CCH^2_{X/k})^{\gamma_2}\times\PPic^m_{C_2/k}\]
of $G$-varieties for some $G$-invariant classes $m\in \NS(C_2)$ and $\gamma_2\in \NS^2(X)$, and a $G$-equivariant morphism 
\[(\CCH^2_{Y/k})^{0}\times \PPic^0_{C_1/k}\cong (\CCH^2_{X/k})^{0}\times\PPic^0_{C_2/k}\]
of $G$-equivariant principally polarized abelian varieties. In addition, the following diagram is commutative:
\[
  \begin{CD}
     (\CCH^2_{X/k})^{\gamma_1}\times \PPic^n_{C_1/k} @>{\overset{G}{\cong}}>> (\CCH^2_{Y/k})^{\gamma_2}\times\PPic^m_{C_2/k} \\
  @V{\text{transitive}}VV    @V{\text{transitive}}VV \\
    (\CCH^2_{X/k})^{0}\times \PPic^0_{C_1/k}   @>{\overset{G}{\cong}}>>  (\CCH^2_{Y/k})^{0}\times\PPic^0_{C_2/k}.
  \end{CD}
\] 
Here the vertical morphisms are transitive, i.e. these preserve direct products. 
\end{thm}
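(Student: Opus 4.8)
The statement follows from the equivariant functorial weak factorization theorem together with the two structural lemmas (Lemma \ref{BW20 lemma 2.9} and Lemma \ref{before blowup}) just established. First I would invoke the $G$-equivariant weak factorization theorem over an algebraically closed field of characteristic zero (\cite{6} or \cite{5}): since $X$ and $Y$ are $G$-equivariantly birational, there is a finite chain of $G$-equivariant varieties
\[
X = W_0 \dashleftarrow\dashrightarrow W_1 \dashleftarrow\dashrightarrow \cdots \dashleftarrow\dashrightarrow W_r = Y,
\]
where each $W_i$ is smooth projective (still rationally connected, being birational to $X$ over $k$ of characteristic zero, so that intermediate Jacobians are defined by Theorem \ref{BW20, prop2.5}), and each arrow $W_i \leftarrow\rightarrow W_{i+1}$ is a $G$-equivariant blow-up along a smooth $G$-invariant center. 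After decomposing each $G$-invariant center into its $G$-orbits of connected components — and blowing them up one orbit at a time — I may assume every elementary step is a $G$-equivariant blow-up along a center that is either a point-orbit or a $G$-irreducible smooth curve. Blow-ups centered at finitely many points do not affect $\CCH^2$, $\IIJ$, or the relevant $\NS^2$ (a standard fact: blowing up points on a threefold changes neither $\Ab^2$ nor $\CH^2_{alg}$), so the only nontrivial steps are blow-ups of smooth $G$-irreducible curves.

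For such a step $f\colon W_{i+1}\to W_i$ with center a smooth $G$-irreducible curve $C$, Lemma \ref{before blowup} gives a $G$-equivariant isomorphism $(f^+,z^+)\colon \IIJ(W_i)\times\JJ(C)\xrightarrow{\sim}\IIJ(W_{i+1})$ of $G$-ppav's, together with the $G$-equivariant isomorphism $(f^*,z^*)\colon \CH^2(W_i)\times\Pic(C)\xrightarrow{\sim}\CH^2(W_{i+1})$ and its compatibility with the Abel–Jacobi maps via the commutative diagram in part $(3)$. Translating this into the language of Chow group schemes: since $\CH^2(W_{i+1})_{alg}\cong\CH^2(W_i)_{alg}\times\Pic^0(C)$ $G$-equivariantly and the identification is induced by morphisms of $k$-schemes, for any $G$-invariant $\gamma\in\NS^2(W_{i+1})$ we get a $G$-invariant $\gamma'\in\NS^2(W_i)$ and a $G$-equivariant isomorphism of torsors $(\CCH^2_{W_{i+1}/k})^\gamma\cong (\CCH^2_{W_i/k})^{\gamma'}\times\PPic^{n'}_{C/k}$, where the $\PPic^{n'}_{C/k}$ factor accounts for the $\Pic(C)$ summand at the appropriate $\NS$-level; on the identity components this is the $G$-ppav isomorphism $(\CCH^2_{W_{i+1}/k})^0\cong(\CCH^2_{W_i/k})^0\times\PPic^0_{C/k}$. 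Composing along the chain and absorbing all the curve factors $C$ appearing on the $X$-side into a single disjoint union $C_1$, and all those on the $Y$-side into $C_2$, yields the desired $G$-equivariant isomorphisms
\[
(\CCH^2_{Y/k})^{\gamma_1}\times\PPic^n_{C_1/k}\;\cong\;(\CCH^2_{X/k})^{\gamma_2}\times\PPic^m_{C_2/k},
\qquad
(\CCH^2_{Y/k})^0\times\PPic^0_{C_1/k}\;\cong\;(\CCH^2_{X/k})^0\times\PPic^0_{C_2/k},
\]
the latter as $G$-ppav's. The commutativity of the square relating the $\gamma$-torsors to the identity components is then inherited step-by-step from the commutative diagram in Lemma \ref{before blowup}(3), since each elementary identification respects the transitive action of the algebraically trivial part.

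**The main obstacle.** The genuinely delicate point is not any single algebraic manipulation but the \emph{bookkeeping of $\NS^2$-degrees and of the curves across the whole chain in a $G$-equivariant way}. Two issues must be handled with care: (i) in an elementary step one does not control which $G$-invariant class $\gamma'$ on $W_i$ a given $\gamma$ on $W_{i+1}$ pulls back from, nor the precise component $n'$ of $\PPic_{C/k}$ — one only knows these exist and are $G$-invariant — so the statement must be (and is) phrased with "for some $G$-invariant $\gamma_2, m$"; threading this existential through $r$ steps, and through the splitting of centers into orbits, requires organizing the induction so that the curves accumulated on each side are genuinely $G$-curves (finite disjoint unions of the $G$-irreducible centers, with $G$ permuting the pieces) and the degree indices combine additively. (ii) One must ensure the final $C_1$ and $C_2$ are honest smooth projective $G$-curves: this is automatic since a finite disjoint union of smooth projective $G$-irreducible curves with the induced $G$-action is again a smooth projective curve with regular $G$-action (a "curve" here being allowed to be disconnected, per the conventions fixed in the introduction). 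Granting this careful accounting, which is exactly the argument carried out in \cite[proof of Theorem 1.1 and Theorem 3.3]{1} over $\C$ and which uses nothing about $\C$ beyond the inputs we have now generalized (weak factorization, the two structural lemmas, and the equivariant theory of $G$-ppav's from Section \ref{section 2}), the theorem follows.
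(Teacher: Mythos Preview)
Your proof is correct and follows the same approach as the paper: equivariant weak factorization reduces the question to a single $G$-equivariant blow-up along a $G$-irreducible smooth center, the point (and genus-zero) case is handled by the blow-up formula on $\CH^2$, the positive-genus curve case by Lemma \ref{before blowup}, and the centers are accumulated into the disjoint unions $C_1$, $C_2$. One minor slip worth noting: blowing up a $G$-orbit of points \emph{does} enlarge $\NS^2$ and $\CCH^2$ by a discrete free summand --- though, as your parenthetical correctly indicates, it leaves $\CH^2_{alg}$ and $\Ab^2$ unchanged, so each new $G$-invariant $\NS^2$-class gives a torsor canonically identified with the identity component; the paper makes this explicit via the formula $\CH^2(Y_1)\oplus M\cong \CH^2(Y_2)$.
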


\begin{proof}
According to the assumption, we have a $G$-equivariant birational map \[\varphi:Y\underset{G}{\overset{\sim}{\dashrightarrow}} X.\]
By the equivariant functorial weak factorization theorem(\cite[Theorem 5-2-1]{6} or \cite[Theorem  1.3.3 (1)]{5}), there exists a sequence of $G$-equivariant 
birational maps between smooth projective threefolds with regular $G$-actions
\[Y=Y_n\overset{\psi_{n}}{\dashrightarrow}Y_{n-1}\overset{\psi_{n-1}}{\dashrightarrow}\cdots \overset{\psi_{2}}{\dashrightarrow} Y_1\overset{\psi_{1}}{\dashrightarrow}Y_{0}=X\]
such that $\varphi=\psi_n\circ\cdots\circ\psi_{1}$ and $\psi_{i}$ is a $G$-equivariant blow-up or down with a $G$-irreducible smooth center of $Y_{i-1}$ for $i=1,...,n$. We say that $Y_i$ satisfies the statement if, when $Y=Y_i$, the assertion holds. Since $X$ satisfies the statement, by the induction on $n$, the assertion is reduced to a single $G$-equivariant blow-up
\[\psi:Y_2\to Y_1\]
with a $G$-irreducible smooth center $\Gamma$ where one of the $Y_i$'s satisfies the statement, and we need to prove the statement for the other $Y_j$.\\

Since the smooth center $\Gamma$ of $\psi$ is $G$-irreducible, the dimension and the genus of connected components of $\Gamma$ are constant.
If $\Gamma$ has dimension $0$ or connected components of $\Gamma$ have genus $0$, then we have the blow-up formula
\begin{equation}\label{blow up formula 0}
\CH^2(Y_1)\oplus M\overset{G}{\cong} \CH^2(Y_2)
\end{equation}
which is $G$-equivariant isomorphism for some finitely generated $\Z$-module $M$ with a $G$-action. Restricting to algebraically trivial cycles, we get the $G$-equivariant isomorphism  
\[\psi^*:\CH^2(Y_1)_{alg}\to \CH^2(Y_2)_{alg}.\]
Here we identify $\CH^2(Y_1)_{alg}$ and $\CH^2(Y_1)_{alg}\times\{\overset{\rightarrow}{n}\}$ for some $\overset{\rightarrow}{n}\in M$ (where $\overset{\rightarrow}{n}$ parametrizes algebraocally trivial class in $M=\CH^1(\Gamma)$).
By the equation $\phi_{Y_2}^2\circ \psi^*=\psi^+(k)\circ \phi_{Y_1}^2$ and Lemma \ref{BW20 lemma 2.9}, we obtain the $G$-equivariant isomorphism
\[\psi^+:\IIJ^2(Y_1)\to\IIJ^2(Y_2)\]
of $G$-equivariant principally polarized abelian varieties. According to the isomorphism (\ref{blow up formula 0}), for any $G$-invariant class $\gamma_2\in \NS^2(Y_2)$, there exists a $G$-invariant class $\gamma_1\in \NS^2(Y_1)$ and we get the following commutative diagram:
\[
  \begin{CD}
     (\CH^2(Y_1))^{\gamma_1} @>{\overset{G}{\cong}}>> (\CH^2(Y_2))^{\gamma_2} \\
  @V{\text{transitive}}VV    @V{\text{transitive}}VV \\
     (\CH^2(Y_1))^{0}   @>{\psi^*}>>  (\CH^2(Y_2))^{0}\\
  @V{\phi_{Y_1}^2}VV    @V{\phi_{Y_2}^2}VV \\
     \Ab^2(Y_1)(k)   @>{\psi^+(k)}>>  \Ab^2(Y_2)(k).
  \end{CD}
\]
Hence the following diagram is commutative:
\begin{equation}\label{abc}
  \begin{CD}
     (\CCH^2_{Y_2/k})^{\gamma_2} @>{\overset{G}{\cong}}>> (\CCH^2_{Y_1/k})^{\gamma_1} \\
  @V{\text{transitive}}VV    @V{\text{transitive}}VV \\
     (\CCH^2_{Y_2/k})^{0}   @>{\overset{G}{\cong}}>>  (\CCH^2_{Y_1/k})^{0}.
  \end{CD}
\end{equation}
Here, the upper arrow is a $G$-equivariant morphism, the lower arrow is a $G$-equivariant morphism of $G$-equivariant principally polarized abelian varieties and the vertical arrows are transitive. Similarly, for any $G$-invariant class $\gamma_1\in \NS^2(Y_1)$, there exists a $G$-invariant class $\gamma_2\in \NS^2(Y_2)$ such that we have the same commutative diagram as in (\ref{abc}). Therefore if one of $Y_i$'s satisfies the statement, then the other $Y_j$ does as well.

Assume that $\Gamma$ has dimension $\dim(\Gamma)=1$ and each connected component of $\Gamma$ has genus $g\ge1$. 
If $Y_1$ satisfies the statement, then there exist smooth projective $G$-curves $C, D_1$ such that we have the following property.
\begin{itemize}
\item[(i)]
 For any $G$-invariant classes $\gamma_1\in \NS^2(Y_1), m_1\in \NS(D_1)$, we have $G$-invariant classes $\delta\in \NS^2(X), n\in\NS(C)$, $G$-isomorphisms
\[(\CCH^2_{Y_1/k})^{\gamma_1}\times \PPic^{m_1}_{D_1/k}\cong (\CCH^2_{X/k})^{\delta}\times\PPic^n_{C/k},\]
\[(\CCH^2_{Y_1/k})^{0}\times \PPic^{0}_{D_1/k}\cong (\CCH^2_{X/k})^{0}\times\PPic^0_{C/k}\]
and the following commutative diagram:
\begin{equation*}
  \begin{CD}
     (\CCH^2_{Y_1/k})^{\gamma_1}\times \PPic^{m_1}_{D_1/k} 
     @>{\overset{G}{\cong}}>>
     (\CCH^2_{X/k})^{\delta}\times\PPic^n_{C/k} \\
  @V{\text{transitive}}VV    @V{\text{transitive}}VV \\
     (\CCH^2_{Y_1/k})^{0}\times \PPic^{0}_{D_1/k}   @>{\overset{G}{\cong}}>>  (\CCH^2_{X/k})^{0}\times\PPic^0_{C/k}.
  \end{CD}
\end{equation*}
Here, the vertical morphisms are transitive, and the lower isomorphism is an isomorphism as $G$-equivariant principally polarized abelian varieties.
\end{itemize}

By Lemma \ref{before blowup}, for any $G$-invariant class $\gamma_2\in \NS^2(Y_2)$, there exist $G$-invariant classes $\ell\in\NS(\Gamma),\gamma'_1 \in \NS^2(Y_1)$ such that we have the following commutative diagram:
\[
  \begin{CD}
     (\CH^2(Y_2))^{\gamma_2} @>{\overset{G}{\cong}}>> (\CH^2(Y_1))^{\gamma'_1}\times\Pic^{\ell}(\Gamma) \\
  @V{\text{transitive}}VV    @V{\text{transitive}}VV \\
     (\CH^2(Y_2))^{0}   @>{\overset{G}{\cong}}>>  (\CH^2(Y_1))^{0}\times\Pic^0(\Gamma)\\
  @V{\phi_{Y_2}^2}VV    @V{\phi_{Y_1}^2\times\phi^1_{\Gamma}}VV \\
     \IIJ(Y_2)(k)   @>{\overset{G}{\cong}}>>   \IIJ(Y_1)(k)\times \JJ(\Gamma)(k).
  \end{CD}
\]
Here, the horizontal morphisms are $G$-equivariant
and the lower morphism is an isomorphism as $G$-equivariant principally polarized abelian varieties. 
Then we obtain the following property.
\begin{itemize}
\item[(ii)]For any $G$-invariant class $\gamma_2\in \NS^2(Y_2)$, there exist $G$-invariant classes $\ell\in\NS(\Gamma),\gamma'_1 \in \NS^2(Y_1)$ and $G$-isomorphisms 
\[(\CCH^2_{Y_2/k})^{\gamma_2} \cong (\CCH^2_{Y_1/k})^{\gamma'_1}\times\PPic^{\ell}_{\Gamma/k},\]
\[(\CCH^2_{Y_2/k})^{0}   \cong  (\CCH^2_{Y_1/k})^{0}\times\PPic^0_{\Gamma/k}\]
such that we have the following commutative diagram:
\begin{equation*}
  \begin{CD}
     (\CCH^2_{Y_2/k})^{\gamma_2} @>{\overset{G}{\cong}}>> (\CCH^2_{Y_1/k})^{\gamma'_1}\times\PPic^{\ell}_{\Gamma/k} \\
  @V{\text{transitive}}VV    @V{\text{transitive}}VV \\
     (\CCH^2_{Y_2/k})^{0}   @>{\overset{G}{\cong}}>>  (\CCH^2_{Y_1/k})^{0}\times\PPic^0_{\Gamma/k}.\\
  \end{CD}
\end{equation*}
Here, the lower arrow is $G$-equivariant homomorphism as $G$-equivariant principally polarized abelian varieties.
\end{itemize}

Let $\gamma_2\in\NS^2(Y_2)$ and $m_1\in \NS(D_1)$ be any $G$-invariant classes and $C':=C\sqcup\Gamma$. 
By (i) and (ii), there exist $G$-invariant classes $\gamma_1\in \NS^2(Y_1), \ell\in \NS(\Gamma), n\in \NS(C), \delta \in \NS^2(X), n'=\NS(C')$ such that we have $G$-isomorphisms
\begin{align*}
(\CCH^2_{Y_2/k})^{\gamma_2}\times \PPic^{m_1}_{D_1/k}
&\cong(\CCH^2_{Y_1/k})^{\gamma_1}\times\PPic^{m_1}_{D_1/k}\times\PPic^{\ell}_{\Gamma/k}\\
&\cong(\CCH^2_{X/k})^{\delta}\times \PPic^{\ell}_{\Gamma/k}\times\PPic^{n}_{C/k}\\
&\cong(\CCH^2_{X/k})^{\delta}\times\PPic^{n'}_{C'/k}, 
\end{align*}
\[(\CCH^2_{Y_2/k})^{0} \times\PPic^{0}_{D_1/k}
\cong (\CCH^2_{X/k})^{0}\times\PPic^{0}_{C'/k},\]
where the second isomorphism is an isomorphism as $G$-equivariant principally polarized abelian varieties.
Moreover, we get the following commutative diagram:
\[
  \begin{CD}
     (\CCH^2_{Y_2/k})^{\gamma_2}\times \PPic^{m_1}_{D_1/k}
     @>{\overset{G}{\cong}}>> 
     (\CCH^2_{X/k})^{\delta}\times\PPic^{n'}_{C'/k} \\
  @V{\text{transitive}}VV    @V{\text{transitive}}VV \\
     (\CCH^2_{Y_2/k})^{0} \times\PPic^{0}_{D_1/k}
       @>{\overset{G}{\cong}}>>  
       (\CCH^2_{X/k})^{0}\times\PPic^{0}_{C'/k}.\\
  \end{CD}
\]
Therefore $Y_2$ satisfies the statement.

Similarly, if $Y_2$ satisfies the statement, then $Y_1$ also satisfies the statement.

\end{proof}

\begin{proof}[Proof of Theorem \ref{IJT}]
By Theorem \ref{important lemma}, there exist smooth projective $G$-curves $C,D$ such that, for any $G$-invariant classes $\gamma\in \NS^2(X), n\in \NS(C)$, we have a $G$-invariant class $m\in \NS(D) $, $G$-isomorphisms
\[(\CCH^2_{X/k})^{\gamma}\times \PPic^{n}_{C/k}\cong \PPic^{m}_{D/k}, \] 
\[(\CCH^2_{X/k})^{0}\times \PPic^{0}_{C/k}\cong \PPic^{0}_{D/k} \]
and the following commutative diagram:
\begin{equation}\label{maintheorem 1}
  \begin{CD}
     (\CCH^2_{X/k})^{\gamma_2}\times \PPic^{n}_{C/k} 
     @>{\overset{G}{\cong}}>>
     \PPic^m_{D/k} \\
  @V{\text{transitive}}VV    @V{\text{transitive}}VV \\
     (\CCH^2_{X/k})^{0}\times \PPic^{0}_{C/k}   @>{\overset{G}{\cong}}>>  \PPic^{0}_{D/k}.
  \end{CD}
\end{equation}
Here the lower arrow is an isomorphism as $G$-equivariant principally polarized abelian varieties. Note that the transitives preserve direct products.
Using Corollary \ref{decomposition of ppav} and the Torelli theorem for curves, there exists a smooth projective $G$-curve $D'$ such that $D=D' \sqcup C$. Then, by Corollary \ref{decomposition of ppav} and the diagram (\ref{maintheorem 1}), there exists a $G$-invariant class $m'\in \NS(D')$ such that
we have a $G$-isomorphism 
\[(\CCH^2_{X/k})^{\gamma}\cong \PPic^{m'}_{D'/k} \] 
as $G$-varieties and 
a $G$-isomorphism
\[(\CCH^2_{X/k})^{0}\cong \PPic^{0}_{D'/k} \]
as $G$-equivariant principally polarized abelian varieties.
\end{proof}

\begin{thm}(\cite[Theorem 3.5]{1})\label{CTT24 Theorem 3.4}
Let $X$ and $Y$ be smooth projective rationally connected threefolds with regular $G$-actions such that their intermediate Jacobians are irreducible as $G$-equivariant principally polarized abelian varieties. Assume that
\begin{itemize}
\item the $G$-equivariant IJT-obstruction does not vanish for $X$,
\item $\IIJ(X)$ and $\IIJ(Y)$ are not isomorphic as $G$-equivariant principally polarized abelian varieties
\end{itemize}
Then $X$ is not $G$-equivariantly birational to $Y$.
\end{thm}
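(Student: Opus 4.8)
The plan is to argue by contradiction, so assume that $X$ and $Y$ are $G$-equivariantly birational. Both are smooth projective rationally connected threefolds over the algebraically closed field $k$ of characteristic zero, so Theorem \ref{important lemma} applies; since its hypotheses are symmetric in $X$ and $Y$, I apply it with their roles interchanged. This yields smooth projective $G$-curves $C_1,C_2$, a ($\gamma$-independent) $G$-equivariant isomorphism of $G$-equivariant principally polarized abelian varieties
\[\overline{\Phi}\colon \IIJ(X)\times \JJ(C_1)\ \Gcong\ \IIJ(Y)\times \JJ(C_2),\]
and, for every $G$-invariant class $\gamma\in\NS^2(X)$ and every $G$-invariant $n\in \NS^1(C_1)$, $G$-invariant classes $\gamma_1\in\NS^2(Y)$ and $m\in\NS^1(C_2)$ together with a commutative square whose top row is a $G$-isomorphism $(\CCH^2_{X/k})^{\gamma}\times \PPic^n_{C_1/k}\Gcong (\CCH^2_{Y/k})^{\gamma_1}\times \PPic^m_{C_2/k}$, whose bottom row is $\overline{\Phi}$, and whose vertical arrows are the transitive (direct-product-preserving) maps onto the $0$-parts.

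Next I would study the fixed isomorphism $\overline{\Phi}$. Decomposing $\JJ(C_1)$ and $\JJ(C_2)$ over the positive-genus $G$-irreducible parts of $C_1$ and $C_2$, the example following Corollary \ref{decomposition of ppav} shows that each factor is an irreducible $G$-ppav, and by hypothesis $\IIJ(X)$ and $\IIJ(Y)$ are irreducible as well. Thus $\overline{\Phi}$ is an isomorphism between two products of irreducible $G$-ppavs, so by the uniqueness statement of Corollary \ref{decomposition of ppav} it must carry the factor $\IIJ(X)$ isomorphically onto one of the canonical factors of the target — either onto $\IIJ(Y)$, or onto $\JJ(\Gamma)$ for some positive-genus $G$-irreducible part $\Gamma$ of $C_2$. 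If it is $\IIJ(Y)$, then $\IIJ(X)\Gcong\IIJ(Y)$ as $G$-ppavs, contradicting the second hypothesis. So it remains to rule out the case where $\overline{\Phi}$ restricts to a $G$-equivariant isomorphism of $G$-ppavs $\alpha\colon \IIJ(X)\Gcong \JJ(\Gamma)$; crucially, $\Gamma$ does not depend on $\gamma$, since $\overline{\Phi}$ does not.

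In that case I would show the $G$-equivariant IJT-obstruction vanishes for $X$, contradicting the first hypothesis. Fix a $G$-invariant $\gamma\in \NS^2(X)$ and use the square above with $n=0$, so that $\PPic^0_{C_1/k}=\JJ(C_1)$ is the trivial $G$-torsor. Over $k$, a choice of base points identifies every $G$-torsor under a $G$-abelian variety with that abelian variety carrying a twisted $G$-action, the twisting cocycle depending only on the torsor and behaving additively under products; chasing the commutative square (using that the verticals preserve direct products and that the bottom row is the ppav-isomorphism $\overline{\Phi}$, which respects the canonical decomposition into irreducibles) shows that $\overline{\Phi}$ matches the twisting cocycle of $(\CCH^2_{X/k})^{\gamma}\times \PPic^0_{C_1/k}$ with that of $(\CCH^2_{Y/k})^{\gamma_1}\times \PPic^m_{C_2/k}$. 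Reading off the component along the factor $\JJ(\Gamma)$ — to which $\alpha_*$ carries the component along $\IIJ(X)$, while the contributions of the other factors of $\JJ(C_1)$ vanish because $\PPic^0_{C_1/k}$ is trivial — one obtains a $G$-isomorphism $(\CCH^2_{X/k})^{\gamma}\Gcong \PPic^{m_{\Gamma}}_{\Gamma/k}$, where $m_{\Gamma}$ is the $\Gamma$-component of $m$. Since the single curve $\Gamma$ works for every $G$-invariant $\gamma$, the $G$-equivariant IJT-obstruction vanishes for $X$, the desired contradiction; hence $X$ is not $G$-equivariantly birational to $Y$.

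The main obstacle is the torsor-level cancellation in the last paragraph — deducing $(\CCH^2_{X/k})^{\gamma}\Gcong \PPic^{m_{\Gamma}}_{\Gamma/k}$ from the product isomorphism and the commutative square. This is precisely the step performed rather tersely at the end of the proof of Theorem \ref{IJT}; making it rigorous requires, first, the fact that an isomorphism of $G$-ppavs respects the canonical decomposition into irreducible factors, which I would extract from Corollary \ref{decomposition of ppav}, and second, a careful bookkeeping of torsor classes factor by factor, which is exactly what the direct-product-preservation of the vertical maps in Theorem \ref{important lemma} is designed to allow. Once this is in place, the remainder is a routine combination of Corollary \ref{decomposition of ppav}, Theorem \ref{important lemma}, and the two hypotheses.
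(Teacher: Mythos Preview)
Your proposal is correct and follows essentially the same route as the paper's proof: assume $G$-birationality, invoke Theorem \ref{important lemma}, use the irreducible decomposition of $G$-ppavs (Corollary \ref{decomposition of ppav}) together with the hypothesis $\IIJ(X)\not\Gcong\IIJ(Y)$ to force $\IIJ(X)\Gcong\JJ(\Gamma)$ for some $G$-irreducible subcurve $\Gamma$, and then perform the torsor-level cancellation via the commutative square to produce $(\CCH^2_{X/k})^{\gamma}\Gcong\PPic^{m_{\Gamma}}_{\Gamma/k}$.

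The only difference is in the logical packaging. The paper first extracts from the non-vanishing of the IJT-obstruction a single bad class $\gamma$ for which no such $\PPic$-description exists, and then derives a contradiction for that one $\gamma$; you instead run the argument for every $G$-invariant $\gamma$ and conclude that the IJT-obstruction vanishes. The paper's framing is marginally more economical, since it never needs your observation that $\overline{\Phi}$ (and hence $\Gamma$) is independent of $\gamma$. Your version, on the other hand, makes that independence explicit and yields the slightly stronger intermediate conclusion. The torsor cancellation step that you flag as the main obstacle is exactly the step the paper dispatches in one sentence (invoking Corollary \ref{decomposition of ppav}, the second hypothesis, and Torelli); your cocycle bookkeeping is a reasonable way to unpack it.
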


\begin{proof}

Assume that $X$ and $Y$ are $G$-equivariantly birational. 
By the first assumption, there exists a $G$-invariant class $\gamma\in\NS^2(X)$ such that, for any smooth projective $G$-curve $C$ and $G$-invariant class $n$, the $G$-variety $(\CCH^2_{X/k})^{\gamma}$ is not isomorphic to $\PPic^n_{C/k}$. 
By Theorem \ref{important lemma}, there exist smooth projective $G$-curves $C',D$ and $G$-invariant classes $m\in \NS(D)$, $\gamma'\in \NS^2(Y)$ such that the following diagram is commutative:
\[
  \begin{CD}
     (\CCH^2_{X/k})^{\gamma}\times \PPic^0_{C'/k} @>{\overset{G}{\cong}}>> (\CCH^2_{Y/k})^{\gamma'}\times\PPic^m_{D/k} \\
  @V{\text{transitive}}VV    @V{\text{transitive}}VV \\
    (\CCH^2_{X/k})^{0}\times \PPic^0_{C'/k}   @>{\overset{G}{\cong}}>>  (\CCH^2_{Y/k})^{0}\times\PPic^0_{D/k}
  \end{CD}
\] 
Here the transitives preserve the direct products. There exists smooth projective $G$-irreducible curves $D_1,...,D_N$ such that $D$ and $D_1\sqcup \cdots\sqcup D_N$ are $G$-equivariant isomorphic. Then we have an isomorphism 
\[(\CCH^2_{Y/k})^{0}\times\PPic^0_{D/k}\overset{G}{\cong} (\CCH^2_{Y/k})^{0}\times\PPic^0_{D_1/k}\times \cdots \times \PPic^0_{D_N/k}\] 
as $G$-equivariant principally polarized abelian varieties. In addition, $G$-equivariant principally polarized abelian varieties $\PPic^0_{D_1/k}$,..., $\PPic^0_{D_N/k}$, and $(\CCH^2_{Y/k})^{0}$ are irreducible. Since $\IIJ(X)=(\CCH^2_{X/k})^{0}$ is irreducible, by Corollary \ref{decomposition of ppav} and the second assumption, we get an isomorphism 
\[(\CCH^2_{X/k})^{0}\cong \PPic^0_{D_{i}/k}\]
as $G$-equivariant principally polarized abelian varieties for some $i$. Then there exists a $G$-invariant class $m$ such that 
the following diagram is commutative:
\[
  \begin{CD}
     (\CCH^2_{X/k})^{\gamma} @>{\overset{G}{\cong}}>> \PPic^m_{D_i/k} \\
  @V{\text{transitive}}VV    @V{\text{transitive}}VV \\
    (\CCH^2_{X/k})^{0}   @>{\overset{G}{\cong}}>>  \PPic^0_{D_i/k}.
  \end{CD}
\] 
This contradicts the first assumption.
\end{proof}

\begin{cor}(\cite[Corollary 3.6]{1})
Let $X$ and $Y$ be smooth projective rationally connected threefolds with regular $G$-actions such that their intermediate Jacobians $\IIJ(X)$ and $\IIJ(Y)$ are isomorphic as $G$-equivariant principally polarized abelian varieties to the Jacobians of $G$-irreducuble smooth projective curves $C$ and $D$ whose connected components have genus $g\ge 2$, respectively. Assume that
\begin{itemize}
\item the $G$-equivariant IJT-obstruction does not vanish for $X$,
\item $C$ is not $G$-equivariantly isomorphic to $D$.
\end{itemize}
Then $X$ is not $G$-equivariantly birational to $Y$.
\end{cor}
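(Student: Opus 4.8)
The plan is to reduce this corollary to Theorem \ref{CTT24 Theorem 3.4}. That theorem concludes that $X$ is not $G$-equivariantly birational to $Y$ once one knows three things: that $\IIJ(X)$ and $\IIJ(Y)$ are irreducible as $G$-equivariant principally polarized abelian varieties, that the $G$-equivariant IJT-obstruction does not vanish for $X$, and that $\IIJ(X)$ and $\IIJ(Y)$ are not isomorphic as $G$-equivariant principally polarized abelian varieties. The second of these is already a hypothesis of the corollary, so only the first and the third require checking.

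For irreducibility, recall that by hypothesis $\IIJ(X) \Gcong \JJ(C)$ and $\IIJ(Y) \Gcong \JJ(D)$, where $C$ and $D$ are $G$-irreducible smooth projective curves whose connected components have genus $g \ge 2$, in particular $g \ge 1$. By the Example following Corollary \ref{decomposition of ppav}, the $G$-equivariant Jacobian of a $G$-irreducible smooth projective curve of genus $\ge 1$ is an irreducible $G$-equivariant principally polarized abelian variety. Hence $\IIJ(X)$ and $\IIJ(Y)$ are irreducible. (Only $g \ge 1$ is used here; the stronger bound $g \ge 2$ enters in the Torelli argument below.)

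For non-isomorphism, suppose toward a contradiction that there is a $G$-equivariant isomorphism $\IIJ(X) \Gcong \IIJ(Y)$ of $G$-equivariant principally polarized abelian varieties. Composing with the hypothesised $G$-isomorphisms $\IIJ(X) \Gcong \JJ(C)$ and $\IIJ(Y) \Gcong \JJ(D)$ produces a $G$-equivariant isomorphism $\JJ(C) \Gcong \JJ(D)$ of principally polarized abelian varieties, where the $G$-actions are those functorially induced from the $G$-actions on the curves. Decomposing $C$ and $D$ into connected components and using the uniqueness of the decomposition into indecomposable $G$-equivariant principally polarized abelian varieties (Corollary \ref{decomposition of ppav}), this isomorphism matches up the corresponding Jacobian factors; since each connected component has genus $\ge 2$, the Torelli theorem identifies each such factor with an isomorphism between the corresponding components of $C$ and $D$, and assembling these, compatibly with the way $G$ permutes components, yields a $G$-equivariant isomorphism $C \cong D$ of $G$-curves. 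This contradicts the assumption that $C$ is not $G$-equivariantly isomorphic to $D$. Having verified the first and third hypotheses, Theorem \ref{CTT24 Theorem 3.4} applies and gives the claim.

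I expect the delicate point to be the last step: promoting the classical Torelli theorem to a $G$-equivariant statement. One must handle the sign ambiguity inherent in Torelli (for a hyperelliptic component, an automorphism of the canonically polarized Jacobian may differ by $-1$ from the automorphism induced by a curve automorphism) and keep careful track of the induced $G$-permutation of the connected components of the possibly disconnected curves $C$ and $D$; both points are standard but should be spelled out. Everything else is bookkeeping that feeds directly into Theorem \ref{CTT24 Theorem 3.4}.
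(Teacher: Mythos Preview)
Your proposal is correct and follows essentially the same route as the paper: deduce from Torelli that $\JJ(C)\not\Gcong\JJ(D)$ as $G$-equivariant principally polarized abelian varieties, and then apply Theorem \ref{CTT24 Theorem 3.4}. You are simply more explicit than the paper in checking the irreducibility hypothesis of that theorem (via the Example after Corollary \ref{decomposition of ppav}) and in flagging the equivariant Torelli subtleties, which the paper's one-line proof leaves implicit.
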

\begin{proof}
By the second assumption and the Torelli theorem, $\JJ(C)$ is not $G$-equivariantly isomorphic to $\JJ(D)$ as $G$-equivariant principally polarized abelian varieties. The assertion follows from Theorem \ref{CTT24 Theorem 3.4}.

\end{proof}

\section{Automorphism groups of smooth Fano threefolds of \textnumero 2.18}\label{section 4}

In this section, we work over an algebraically closed field $k$ of characteristic zero.  The purpose of the section is to study (the orders of) the automorphism groups of Fano threefolds of \textnumero 2.18 (\cite{IP} and \cite{MM81}). \cite{CTT} first computed automorphism groups of some Fano threefolds of \textnumero 2.18.

These varieties have three structures: a conic bundle, a double cover, and a quadric surface bundle. Here a double cover is a finite flat morphism of degree $2$ over $k$.

\subsection{Fano threefolds of \textnumero 2.18}\label{section 4.3}
In the section, we present a method for computing the order of the automorphism groups of double covers of $\Line^1\times\Line^2$ ramified along a smooth divisor of bidegree $(2,2)$.

Let 
\[\pi:X\to\Line^1_{[t_0:t_1]}\times\Line^2_{[x:y:z]}\] 
be a double cover branched in a $(2,2)$-surface $Z$ over $k$. We call a prime $(2,2)$-divisor $Z$ on $\Line^1\times\Line^2$ a $(2,2)$-surface. A $(2,2)$-surface $Z$ is given by
\[t_0^2Q_1+2t_0t_1Q_2+t_1^2Q_3=0\]
for three quadric forms $Q_1,Q_2,Q_3\in k[x,y,z]$.
We denote by $\pi_1$ and $\pi_2$ the composition of the first and second projections in $\pi$, respectively. Then $X$ is smooth if and only if $Z$ is (see \cite[Lemma 2.51]{KM98}). The smooth projective variety $X$ is a smooth Fano threefold of \textnumero 2.18 in the Mori-Mukai classification. We consider smooth Fano threefolds of \textnumero 2.18, i.e., we assume that the branch locus $Z$ is smooth. 
The map $\pi_2:X\to\Line^2$ is a standard conic bundle on $\Line^2$, and the discriminant curve of $\pi_2$ is 
\[\Delta:Q_2^2-Q_1Q_3=0\subset \Line^2.\]
The subscheme $\Delta$ is reduced (see \cite[Section 2.1]{DJK24} or \cite[Corollary 1.9]{Sar82}). But the quartic curve $\Delta$ is not necessarily irreducible (see Example \ref{No.2.18 with reducible quartic}). By \cite[5.6.1. Corollary]{Pro18}, the conic bundle $X$ on $\Line^2$ with a quartic plane curve is rational. 
A point $p\in \Delta(k)$ is singular if and only if $p\in \{Q_1=Q_2=Q_3=0\}$(\cite[Lemma 2.15]{DJK24}). The quartic curve $\Delta$ has at worst $A_1$-singularities (see \cite[Section 2,1]{DJK24} or \cite[3.3.3. Corollary]{Pro18}). However, not every quartic curve with at worst $A_1$-singularities arises as the discriminant of a smooth Fano threefold of \textnumero 2.18.  The quartic plane curve $\Delta=\{x(zy^2-x^3+az^2x+bz^3)=0\}\subset \Line^2_{[x:y:z]}$ over $\C$ is one example of such curves, where $4a^3\neq 27b^2$, $a\neq 0$, and $b\neq 0$. Indeed, since the curve $\Delta$ has a decomposition $\Delta=H\cup C$ such that $H$ and $C$ intersect distinct three points, by \cite[(3.8.2) and Section 7]{Pro18}, $\Delta$ does not arise as the discriminant of a smooth Fano threefold of \textnumero 2.18.

The morphism 
\[\pi_2|_{Z\setminus\pi_2^{-1}(\Sing\Delta)}:Z\setminus\pi_2^{-1}(\Sing\Delta)\to\Line^2\setminus\Sing\Delta\]
is a double cover. There exists uniquely a non-trivial birational involution $\sigma$ on $Z$ acting trivially on $\Line^2$. We say that a smooth $(2,2)$-divisor $Z$ has the non-trivial involution on $Z$ acting trivially on $\Line^2$ if the involution $\sigma$ is regular.

If $\Delta$ is smooth, we see that $Z$ is a smooth del Pezzo surface of degree $2$ with double cover $\pi_2|_Z:Z\to \Line^2$ branched in $\Delta$. Conversely, a smooth del Pezzo surface is a smooth $(2,2)$-divisor on $\Line^1\times\Line^2$, and the projection to $\Line^2$ is a double cover branched in a smooth plane quartic. When $\Delta$ is smooth, we have a relation between automorphism groups of $Z$ and $\Delta$.
\begin{thm}\label{aut of del pezzo}(\cite[Section 8.7.3]{CAG})
Let $Z$ be a smooth del Pezzo surface of degree $2$ with the smooth plane quartic $\Delta$ over $k$. Then $\Aut(Z)$ is isomorphic to an extension of $\Aut(\Delta)$ by the cyclic group of order $2$, generated by the non-trivial involution acting trivially on $\Line^2$. 
\end{thm}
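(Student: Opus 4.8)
The plan is to exploit the intrinsic anticanonical double-cover structure of $Z$. Since $Z$ is a del Pezzo surface of degree $2$ we have $(-K_Z)^2=2$ and $h^0(Z,-K_Z)=3$, and the anticanonical system $|-K_Z|$ is base-point free, defining a finite morphism $\phi:=\phi_{-K_Z}\colon Z\to \PP^2=\PP\bigl(H^0(Z,-K_Z)^{\vee}\bigr)$ of degree $2$ whose branch divisor is the given smooth plane quartic $\Delta$. Because $-K_Z$ is canonically attached to $Z$, every automorphism acts linearly on $H^0(Z,-K_Z)$, so $\phi$ is $\Aut(Z)$-equivariant for an induced action on $\PP^2$; since the branch locus is intrinsic, this action preserves $\Delta$, giving a homomorphism
\[\rho\colon \Aut(Z)\longrightarrow \Aut(\PP^2,\Delta),\]
where $\Aut(\PP^2,\Delta)\subset\PGL_3(k)$ denotes the stabilizer of $\Delta$. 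First I would identify $\Aut(\PP^2,\Delta)\cong\Aut(\Delta)$: a smooth plane quartic is a non-hyperelliptic curve of genus $3$ embedded by its canonical system, so restriction $\Aut(\PP^2,\Delta)\to\Aut(\Delta)$ is well defined and injective (a linear automorphism fixing the non-degenerate curve $\Delta$ fixes $\PP^2$), and surjective because any automorphism of $\Delta$ preserves $\omega_\Delta$ and hence extends to $\PP^2=\PP\bigl(H^0(\Delta,\omega_\Delta)^{\vee}\bigr)$.

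Next I would compute $\ker\rho$. An element of the kernel is an automorphism $\sigma$ of $Z$ with $\phi\circ\sigma=\phi$, that is, a deck transformation of the degree-$2$ cover $\phi$. Over the complement of the ramification divisor $\phi$ is an étale double cover of an irreducible base, so its deck group is $\Z/2\Z$; the non-trivial element extends to a regular involution $\gamma$ on all of $Z$ (the Geiser involution), which by construction acts trivially on $\PP^2$. Hence $\ker\rho=\langle\gamma\rangle\cong\Z/2\Z$, and it is central in $\Aut(Z)$ since conjugation gives a homomorphism $\Aut(Z)\to\Aut(\Z/2\Z)=1$.

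It remains to show that $\rho$ is surjective, which I expect to be the main point. Realize $Z$ as the relative spectrum $Z=\underline{\Spec}_{\PP^2}\bigl(\mathcal{O}_{\PP^2}\oplus\mathcal{O}_{\PP^2}(-2)\bigr)$ with algebra structure induced by the map $\mathcal{O}_{\PP^2}(-4)\xrightarrow{\,s\,}\mathcal{O}_{\PP^2}$, where $s\in H^0(\PP^2,\mathcal{O}_{\PP^2}(4))$ is an equation of $\Delta$. Given $g\in\Aut(\PP^2,\Delta)$, the divisor $g^{*}\Delta=\Delta$ is reduced and irreducible, so $g^{*}s=\lambda s$ for some $\lambda\in k^{*}$; choosing $\mu\in k^{*}$ with $\mu^{2}=\lambda$ (possible since $k$ is algebraically closed, and here the only use of $2$ being invertible enters) and using the canonical isomorphism $g^{*}\mathcal{O}_{\PP^2}(2)\cong\mathcal{O}_{\PP^2}(2)$, the pair consisting of the identity on $\mathcal{O}_{\PP^2}$ and $\mu$ times the pullback on $\mathcal{O}_{\PP^2}(-2)$ is an algebra isomorphism compatible with $s$; it therefore determines $\widetilde g\in\Aut(Z)$ with $\rho(\widetilde g)=g$. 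Concretely, writing $Z\subset\PP(1,1,1,2)$ as $w^{2}=F_4(x,y,z)$, one lifts $g\in\PGL_3$ acting on $(x,y,z)$ by also setting $w\mapsto\mu w$. Combining the three steps yields the central extension
\[1\longrightarrow \Z/2\Z\longrightarrow \Aut(Z)\xrightarrow{\ \rho\ }\Aut(\Delta)\longrightarrow 1,\]
with $\Z/2\Z$ generated by the Geiser involution acting trivially on $\PP^2$, which is exactly the asserted statement. The only genuine obstacle is the surjectivity of $\rho$; the kernel computation and the identification $\Aut(\PP^2,\Delta)\cong\Aut(\Delta)$ are formal once the anticanonical double-cover description is set up.
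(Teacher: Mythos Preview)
The paper does not give its own proof of this statement; it is quoted as a known result from Dolgachev's \emph{Classical Algebraic Geometry}, Section~8.7.3. Your proposal is correct and is precisely the standard argument one finds in that reference: the anticanonical double cover $\phi_{-K_Z}\colon Z\to\Line^2$ is intrinsic, so automorphisms of $Z$ descend to $\Aut(\Line^2,\Delta)=\Aut(\Delta)$ (the latter equality via the canonical embedding of a non-hyperelliptic genus-$3$ curve), the kernel is the order-$2$ deck group generated by the Geiser involution, and surjectivity comes from lifting $g\in\Aut(\Delta)$ to $Z\subset\Line(1,1,1,2)$ by $w\mapsto\mu w$ with $\mu^2=\lambda$.
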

In addition, the following theorem is a property of automorphism groups of smooth plane quartic curves:
\begin{thm}(\cite[Lemma 6.5.1]{CAG})\label{cyclic aut of quartic}
Let $\Delta\subset \Line^2$ be a smooth plane quartic curve over $k$ and $\varphi$ is an automorphism of $\Delta$ of order $n$. Then, by choosing coordinates, the generator of the cyclic group $\langle \varphi \rangle$ is represented by the diagonal matrix $\diag(1, \zeta_n^a, \zeta_n^b)$, where $\zeta_n$ is a primitive $n$-th root of unity.

The cyclic action on $\Delta$, represented by $\diag(1,\zeta_n^a,\zeta_n^b)$, is called Type $n;(a,b)$. 
\end{thm}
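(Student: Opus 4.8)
The plan is to reduce the statement to linear algebra over $k$ in two moves: first, to show that every automorphism of the abstract curve $\Delta$ is induced by a projective linear transformation of $\Line^2$ preserving $\Delta$; second, to diagonalize a finite-order lift of $\varphi$ to $\GL_3(k)$.

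For the first move I would identify the given plane embedding $\Delta\subset\Line^2$ with the canonical embedding of $\Delta$. A smooth plane quartic has genus $g=3$, and by adjunction $K_\Delta\cong(K_{\Line^2}+\Delta)|_\Delta\cong\ShO_{\Line^2}(1)|_\Delta=\ShO_\Delta(1)$. The restriction map $H^0(\Line^2,\ShO_{\Line^2}(1))\to H^0(\Delta,\ShO_\Delta(1))$ is injective (no line contains $\Delta$) between spaces of dimension $3=g=h^0(K_\Delta)$, hence an isomorphism; and $\Delta$ is non-hyperelliptic (the canonical map of a hyperelliptic genus-$3$ curve is two-to-one onto a conic, not a closed immersion), so $\ShO_\Delta(1)=K_\Delta$ and the embedding $\Delta\hookrightarrow\Line^2\cong\Line\bigl(H^0(\Delta,K_\Delta)^{\vee}\bigr)$ is, up to a linear change of coordinates, the canonical one. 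Therefore every $\varphi\in\Aut(\Delta)$ acts $k$-linearly on $H^0(\Delta,K_\Delta)$ compatibly with this embedding, i.e.\ $\varphi$ is the restriction of an element of $\PGL_3(k)$ stabilizing $\Delta$; this element is unique because $\Delta(k)$ contains four points in general position, so an element of $\PGL_3(k)$ fixing $\Delta$ pointwise is the identity. Hence $\Aut(\Delta)\hookrightarrow\PGL_3(k)$.

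For the second move, pick any lift $g\in\GL_3(k)$ of $\varphi$; then $g^n=\lambda I$ for some $\lambda\in k^\times$, and since $k$ is algebraically closed there is $\mu\in k^\times$ with $\mu^n=\lambda$, so $g':=\mu^{-1}g$ also represents $\varphi$ and satisfies $(g')^n=I$. Because $\ch k=0$, the minimal polynomial of $g'$ divides the separable polynomial $X^n-1$, so $g'$ is diagonalizable: in suitable coordinates $g'=\diag(\alpha,\beta,\gamma)$ with $\alpha^n=\beta^n=\gamma^n=1$. Passing back to $\PGL_3(k)$ and rescaling by $\alpha^{-1}$, the generator $\varphi$ is represented by $\diag(1,\beta\alpha^{-1},\gamma\alpha^{-1})$; since $\beta\alpha^{-1}$ and $\gamma\alpha^{-1}$ are $n$-th roots of unity, they equal $\zeta_n^a$ and $\zeta_n^b$ for a fixed primitive $n$-th root of unity $\zeta_n$ and suitable integers $a,b$, which is the asserted form.

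The main obstacle is the first move: one must know that an automorphism of $\Delta$ as an abstract variety genuinely extends to $\PGL_3(k)$ rather than only to a birational or otherwise non-linear self-map of $\Line^2$. This is precisely the canonical-embedding identification, and it rests on the non-hyperellipticity of a smooth plane quartic; once linearity is in hand, the diagonalization and the normalization in $\PGL_3(k)$ are routine. I would also note that $a$ and $b$ are determined only modulo $n$ and up to the residual symmetries of the normal form (swapping the two non-unit eigenvalues, or reindexing which eigenvalue is set to $1$), so that ``Type $n;(a,b)$'' should be read as a label rather than a strict invariant.
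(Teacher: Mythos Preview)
Your proof is correct. The paper does not supply its own argument for this theorem; it is stated with a bare citation to \cite[Lemma 6.5.1]{CAG} and no proof environment follows. Your two-step approach---identifying the given plane embedding with the canonical embedding (via adjunction and non-hyperellipticity) to obtain $\Aut(\Delta)\hookrightarrow\PGL_3(k)$, then lifting $\varphi$ to a finite-order element of $\GL_3(k)$ and diagonalizing over the algebraically closed field of characteristic zero---is the standard proof and matches what one finds in Dolgachev's text. Your closing remark on the ambiguity in the label $n;(a,b)$ is also accurate, though not needed for the statement as written.
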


The following proposition means that given any three quadric forms $Q_1,Q_2,Q_3
\in k[x,y,z]$ such that $Q_2^2-Q_1Q_3=0$ is a smooth plane quartic, we obtain a smooth Fano threefold of \textnumero 2.18.
\begin{prop}(\cite[Lemma 2.15 (3)]{DJK24})\label{smoothness}
For three quadric forms $Q_1,Q_2,Q_3\in k[x,y,z]$ such that $Q_2^2-Q_1Q_3=0$ is a smooth plane quartic, the surface 
\[t_0^2Q_1+2t_0t_1Q_2+t_1^2Q_3=0 \subset \Line^1_{[t_0:t_1]}\times\Line^2_{[x:y:z]}\]
is smooth.
\end{prop}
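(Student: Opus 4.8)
The plan is to prove the contrapositive by the Jacobian criterion: if the $(2,2)$-surface
\[Z=\{F:=t_0^2Q_1+2t_0t_1Q_2+t_1^2Q_3=0\}\subset\Line^1_{[t_0:t_1]}\times\Line^2_{[x:y:z]}\]
is singular at some point, then the plane quartic $\Delta=\{Q_2^2-Q_1Q_3=0\}$ is singular as well, contrary to hypothesis. Since $\Line^1\times\Line^2$ is smooth and $Z$ is an effective Cartier divisor on it, a point $p\in Z$ is a singular point of $Z$ exactly when the differential of $F$ vanishes on $T_p(\Line^1\times\Line^2)$; applying the two Euler relations for the bihomogeneous polynomial $F$, this is equivalent to the simultaneous vanishing at $p$ of $F_{t_0},F_{t_1}$ and of $F_x,F_y,F_z$.

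First I would use $F_{t_0}(p)=F_{t_1}(p)=0$. Write $p=([a:b],p_0)$ and $q_i:=Q_i(p_0)$. These two equations read $aq_1+bq_2=0$ and $aq_2+bq_3=0$, a homogeneous linear system in $(a,b)$ with the nonzero solution $(a,b)$, so its coefficient determinant $q_1q_3-q_2^2$ vanishes; hence $p_0\in\Delta$. Moreover, after rescaling we are in one of two cases: either $(a,b)=(1,0)$, in which case $q_1=q_2=0$, or $(a,b)=(a,1)$, in which case $q_2=-aq_1$ and $q_3=a^2q_1$; in both cases $F(p)=0$ then holds automatically.

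Next I would feed this into the remaining equations $F_x(p)=F_y(p)=F_z(p)=0$, which say that the binary form $a^2Q_1+2abQ_2+b^2Q_3$ has vanishing $x$-, $y$-, $z$-partials at $p_0$. The crux is the identity obtained by differentiating the quartic, $\partial_x(Q_2^2-Q_1Q_3)=2Q_2(Q_2)_x-(Q_1)_xQ_3-Q_1(Q_3)_x$: substituting the values of $q_1,q_2,q_3$ found above shows that in the case $(a,b)=(a,1)$ the quantity $\partial_x(Q_2^2-Q_1Q_3)(p_0)$ equals $-q_1$ times $\bigl(a^2(Q_1)_x+2ab(Q_2)_x+b^2(Q_3)_x\bigr)(p_0)$, hence vanishes; while in the case $(a,b)=(1,0)$ one has $q_1=q_2=0$ and $F_x(p)=0$ forces $(Q_1)_x(p_0)=0$, so $\partial_x(Q_2^2-Q_1Q_3)(p_0)=-q_3(Q_1)_x(p_0)=0$. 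The same holds for the $y$- and $z$-derivatives, so $\Delta$ is singular at $p_0$, a contradiction. Hence $Z$, and therefore $X$, is smooth.

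The computation is routine; the only genuinely delicate points are the case split on the $\Line^1$-coordinate — in the chart $t_1=0$ the ``fibre conic'' degenerates to $Q_1$ alone, which is why that case must be handled separately — and the correct formulation of the Jacobian criterion on a product of projective spaces. A more conceptual, if more machinery-heavy, alternative would be to observe that smoothness of $\Delta$ forces $\{Q_1=Q_2=Q_3=0\}=\emptyset$ (a common zero is visibly a singular point of $\Delta$), so that $\pi_2\colon Z\to\Line^2$ is a finite flat double cover branched exactly along $\Delta$, and then to invoke the standard fact that a double cover of a smooth variety branched along a smooth divisor is itself smooth (locally it is $\Spec R[w]/(w^2-u)$ with $u$ part of a regular system of parameters of the regular local ring $R$).
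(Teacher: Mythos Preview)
Your proof is correct. The Jacobian-criterion computation is clean, and your case split on the $\Line^1$-coordinate handles the only genuine subtlety. The identity $\partial_x(Q_2^2-Q_1Q_3)(p_0)=-q_1\cdot F_x(p)$ in the chart $b=1$ is exactly the right bridge between singularity of $Z$ and singularity of $\Delta$.

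Note, however, that the paper does not supply its own proof of this proposition: it simply cites \cite[Lemma 2.15 (3)]{DJK24} and moves on. So there is no argument in the paper to compare against. That said, your ``more conceptual'' alternative is essentially the viewpoint the surrounding text of the paper adopts: it records (again citing \cite[Lemma 2.15]{DJK24}) that a point of $\Delta$ is singular if and only if it lies in $\{Q_1=Q_2=Q_3=0\}$, so smoothness of $\Delta$ forces this common zero locus to be empty, whence $\pi_2\colon Z\to\Line^2$ is a finite flat double cover branched along the smooth divisor $\Delta$, and $Z$ is smooth. Your direct computation and this double-cover argument are the two standard routes; the first is self-contained, the second is shorter once the structural input is in place.
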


\subsubsection*{A relation between $\Aut(X)$ and $\Aut(Z)$}
We introduce some tools for computing the automorphism groups of $X$. The following claim is well known to experts. 
\begin{prop}(cf. \cite[Proposition 9.3 Case 1]{DJK24})\label{two dc}
Let $\pi:\mathcal{X}\to \Line^1\times \Line^2$ and $\pi':\mathcal{X}'\to \Line^1\times \Line^2$ be double covers branched in a smooth $(2,2)$-divisor. Assume that we have an isomorphism $\phi:\mathcal{X}\cong \mathcal{X}'$. Then for $i=1,2$ there exists an automorphism $\psi_i$ of $\Line^i$ such that the following diagram is commutative:
\[
  \begin{CD}
     \mathcal{X} @>{p_i\circ\pi}>> \Line^i \\
  @V{\phi}VV    @V{\psi_i}VV \\
      \mathcal{X}'  @>{p_i\circ\pi'}>>  \Line^i,
  \end{CD}
\]
where $p_1:\Line^1\times\Line^2\to\Line^1$ and $p_2:\Line^1\times\Line^2\to\Line^2$ are the first and second projection, respectively. 
\end{prop}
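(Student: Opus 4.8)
The plan is to recognize $p_1\circ\pi$ and $p_2\circ\pi$ as the two elementary contractions of $\mathcal{X}$ and to observe that an abstract isomorphism is forced to respect this intrinsic structure. Since $\mathcal{X}$ is a smooth Fano threefold of \textnumero 2.18 it has $\rho(\mathcal{X})=2$, so its Mori cone $\overline{NE}(\mathcal{X})$ is a two-dimensional polyhedral cone with exactly two extremal rays $R_1,R_2$; as $\mathcal{X}$ is Fano these rays are $K_{\mathcal{X}}$-negative, so each admits a contraction $\mathrm{cont}_{R_i}\colon\mathcal{X}\to Z_i$ onto a normal projective variety with $(\mathrm{cont}_{R_i})_*\ShO_{\mathcal{X}}=\ShO_{Z_i}$, and $\mathrm{cont}_{R_i}$ is the unique morphism out of $\mathcal{X}$ with connected fibres contracting exactly the curves whose class lies in $R_i$. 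The same applies to $\mathcal{X}'$, giving rays $R'_1,R'_2$ and contractions $\mathrm{cont}_{R'_i}\colon\mathcal{X}'\to Z'_i$.

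First I would identify the contractions explicitly. A double cover $\pi\colon\mathcal{X}\to\Line^1\times\Line^2$ branched along a smooth $(2,2)$-divisor satisfies $\pi_*\ShO_{\mathcal{X}}=\ShO_{\Line^1\times\Line^2}\oplus\ShO_{\Line^1\times\Line^2}(-1,-1)$, and since $(p_j)_*\ShO_{\Line^1\times\Line^2}(-1,-1)=0$ for $j=1,2$, we get $(p_j\circ\pi)_*\ShO_{\mathcal{X}}=\ShO_{\Line^j}$; thus $p_1\circ\pi$ and $p_2\circ\pi$ are morphisms with connected fibres onto $\Line^1$ and $\Line^2$ respectively. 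The face of $\overline{NE}(\mathcal{X})$ contracted by $p_j\circ\pi$ has dimension $\rho(\mathcal{X})-\rho(\Line^j)=1$, hence is an extremal ray, and since $p_j\circ\pi$ already has connected fibres it coincides with the contraction of that ray. Because $\Line^1$ and $\Line^2$ are not isomorphic the two rays so obtained are distinct, so after relabelling we may take $\mathrm{cont}_{R_1}=p_1\circ\pi\colon\mathcal{X}\to\Line^1$ and $\mathrm{cont}_{R_2}=p_2\circ\pi\colon\mathcal{X}\to\Line^2$, and similarly $\mathrm{cont}_{R'_j}=p_j\circ\pi'\colon\mathcal{X}'\to\Line^j$.

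Then I would transport structure along $\phi$. The isomorphism $\phi\colon\mathcal{X}\xrightarrow{\ \sim\ }\mathcal{X}'$ induces an isomorphism of cones $\phi_*\colon\overline{NE}(\mathcal{X})\xrightarrow{\ \sim\ }\overline{NE}(\mathcal{X}')$, hence a bijection $\{R_1,R_2\}\to\{R'_1,R'_2\}$. If $\phi_*R_j=R'_{\sigma(j)}$, then $\mathrm{cont}_{R'_{\sigma(j)}}\circ\phi$ has connected fibres and contracts exactly the curves with class in $R_j$, so by uniqueness of the contraction it equals $\psi_j\circ\mathrm{cont}_{R_j}$ for a unique isomorphism $\psi_j\colon\Line^j\xrightarrow{\ \sim\ }\Line^{\sigma(j)}$. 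Since there is no isomorphism $\Line^1\xrightarrow{\ \sim\ }\Line^2$ (the dimensions differ), we must have $\sigma=\mathrm{id}$, so $\psi_j\in\Aut(\Line^j)$ and $(p_j\circ\pi')\circ\phi=\psi_j\circ(p_j\circ\pi)$, which is exactly the asserted commutative diagram.

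The main obstacle, and the step deserving the most care, is the middle one: one must verify that $p_j\circ\pi$ is genuinely the extremal contraction and not merely a morphism factoring through it, which is why the pushforward computation $(p_j\circ\pi)_*\ShO_{\mathcal{X}}=\ShO_{\Line^j}$, together with the relative Picard-number count $\rho(\mathcal{X})-\rho(\Line^j)=1$, is indispensable; this in turn rests on $\rho(\mathcal{X})=2$, part of the standard description of \textnumero 2.18. One could instead try to argue via the anticanonical morphism, using $-K_{\mathcal{X}}=\pi^*\ShO_{\Line^1\times\Line^2}(1,2)$, but this is less convenient, since $h^0(-K_{\mathcal{X}})>h^0(\ShO_{\Line^1\times\Line^2}(1,2))$, so the anticanonical map is not simply $\pi$ followed by a projective embedding and would require a separate analysis; the Mori-theoretic argument above sidesteps this.
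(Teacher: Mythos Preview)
Your argument is correct and follows essentially the same Mori-theoretic route as the paper: both identify $p_1\circ\pi$ and $p_2\circ\pi$ as the two extremal contractions of a Picard-rank-$2$ Fano threefold, and then use that $\phi$ must permute the extremal rays while the targets $\Line^1$ and $\Line^2$ are not isomorphic (the paper phrases this last step as distinguishing the quadric-surface bundle from the conic bundle). Your version is simply more explicit in checking that $p_j\circ\pi$ has connected fibres and contracts a single ray, whereas the paper takes this as known from the classification.
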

\begin{proof}
Following \cite[Proposition 9.3 Case 1]{DJK24}, we prove the claims. We have extremal rays $R_1$, $R_2$ such that the contraction morphisms associated to $R_1$ and $R_2$ are $p_1\circ\pi$ and $p_2\circ\pi$, respectively. Since the Picard number $\rho(\mathcal{X})=2$, the extremal faces of the Mori cone $\overline{\NE(\mathcal{X})}$ are only $R_1$, $R_2$. The morphisms $p_1\circ\pi'\circ\phi:\mathcal{X}\to \Line^1$ is a quadric surface bundle on $\Line^1$. Suppose that $\NE(p_1\circ\pi'\circ\phi)=R_2=\NE(p_2\circ\pi)$. Since $p_1\circ\pi'\circ\phi, p_2\circ\pi$ are contraction morphisms, we have these are isomorphic. This is a contraction. 
Hence we see that $\NE(p_1\circ\pi'\circ\phi)=R_1=\NE(p_1\circ\pi)$, i.e. there is an isomorphism $\psi_1$ such that we have the following commutative diagram: 
\[
  \begin{CD}
     \mathcal{X} @>{p_1\circ\pi}>> \Line^1 \\
  @V{\phi}VV    @V{\psi_1}VV \\
      \mathcal{X}'  @>{p_1\circ\pi'}>>  \Line^1.
  \end{CD}
\]
As $p_2\circ\pi'\circ\phi:\mathcal{X}\to \Line^2$ is a conic bundle on $\Line^2$, a similar argument shows that there exists an automorphism $\psi_2$ such that $\psi_1\circ p_2\circ\pi=p_2\circ\pi'\circ\phi$.

\end{proof}
By the above proposition, the morphisms $\pi,\pi_1$ and $\pi_2$ are $\Aut(X)$-equivariant. 
The following claim is stated over $\C$ (\cite{K24}), but still holds over $k$ :
\begin{prop}(\cite[Proposition 1.47]{K24} and \cite[Lemma 4.4.1]{KPS18})
Let $\varphi:\mathcal{X}\to \mathcal{Y}$ be a finite surjective morphism of degree $2$ over $k$ and $\tau:\mathcal{X}\to \mathcal{X}$ be the non-trivial involution that trivially acts on $\mathcal{Y}$. Assume that
\begin{itemize}
\item $\mathcal{X},\mathcal{Y}$ are normal varieties,
\item the Weil divisor class group $\Cl(\mathcal{Y})$ has no non-trivial 2-torsion, and 
\item the morphism $\varphi$ is $\Aut(\mathcal{X})$-equivariant. 
\end{itemize}
Then we get the exact sequence 
\[1\to \langle \tau \rangle\to \Aut(\mathcal{X})\overset{\bar{\varphi}}{\to} \Aut(\mathcal{Y}; \mathcal{Z})\to 1,\]
where $\mathcal{Z}$ is the ramification locus of $\varphi$, $\Aut(\mathcal{Y}; \mathcal{Z})\subset \Aut(\mathcal{Y})$ is the subgroup which preserves $\mathcal{Z}$, and $\bar{\varphi}$ is the natural group homomorphism induced by $\varphi$. 
\end{prop}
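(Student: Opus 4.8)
The plan is to build the homomorphism $\bar\varphi$, compute its kernel, and then prove surjectivity; it is the last step where both the normality of $\mathcal Y$ and the hypothesis $\Cl(\mathcal Y)[2]=0$ are used. First I would construct $\bar\varphi$. Since $\varphi$ is $\Aut(\mathcal X)$-equivariant, each $g\in\Aut(\mathcal X)$ descends to an automorphism $\bar\varphi(g)$ of $\mathcal Y$ with $\varphi\circ g=\bar\varphi(g)\circ\varphi$; this $\bar\varphi(g)$ is unique because $\varphi$ is dominant, and $g\mapsto\bar\varphi(g)$ is then visibly a group homomorphism $\Aut(\mathcal X)\to\Aut(\mathcal Y)$. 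Because the ramification locus of $\varphi$ is intrinsic, it is preserved by every $g$, hence so is its image in $\mathcal Y$ (the branch locus, which we identify with $\mathcal Z$ via the bijection $\varphi|_{\mathcal Z}$); therefore $\bar\varphi$ takes values in $\Aut(\mathcal Y;\mathcal Z)$.

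Next I would identify the kernel. One has $g\in\ker\bar\varphi$ exactly when $\varphi\circ g=\varphi$, i.e. when $g$ is an automorphism of $\mathcal X$ over $\mathcal Y$, that is, a deck transformation. Here I use that $\mathcal X$ is normal and $\varphi$ finite, so $\mathcal X=\underline{\Spec}_{\mathcal Y}(\varphi_*\ShO_{\mathcal X})$ with $\varphi_*\ShO_{\mathcal X}$ the integral closure of $\ShO_{\mathcal Y}$ inside the degree-two field extension $K(\mathcal X)/K(\mathcal Y)$. This extension is Galois (characteristic zero), its two automorphisms preserve the integral closure and $K(\mathcal X)$, and so the group of $\mathcal Y$-automorphisms of $\mathcal X$ is $\{\id,\tau\}=\langle\tau\rangle$, non-trivial by hypothesis. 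Thus $\ker\bar\varphi=\langle\tau\rangle\cong\Z/2\Z$, and it is normal in $\Aut(\mathcal X)$, being a kernel; this gives exactness at $\langle\tau\rangle$ and at $\Aut(\mathcal X)$.

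The main obstacle is surjectivity of $\bar\varphi$. Given $h\in\Aut(\mathcal Y;\mathcal Z)$, I would form the base change $\varphi_h\colon\mathcal X_h:=\mathcal Y\times_{h,\mathcal Y}\mathcal X\to\mathcal Y$ along the first projection; this is again a finite degree-two morphism, with $\mathcal X_h$ normal (it is abstractly isomorphic to $\mathcal X$) and with branch divisor $h^{-1}(\mathcal Z)=\mathcal Z$. Now I invoke the classification of double covers of a normal variety: such a cover is determined by its branch divisor $B$ together with a Weil divisor class $\delta$ with $2\delta\sim B$, and two covers with the same $B$ differ by a class in $\Cl(\mathcal Y)[2]$; since $\Cl(\mathcal Y)[2]=0$, the double cover of $\mathcal Y$ with branch divisor $\mathcal Z$ is unique up to $\mathcal Y$-isomorphism, so $\mathcal X_h\cong\mathcal X$ over $\mathcal Y$. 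Composing $\mathcal X\xrightarrow{\ \sim\ }\mathcal X_h\xrightarrow{\ \mathrm{pr}_2\ }\mathcal X$ produces a finite morphism $\tilde h\colon\mathcal X\to\mathcal X$ with $\varphi\circ\tilde h=h\circ\varphi$; comparing degrees gives $\deg\tilde h=1$, and a finite birational morphism onto the normal variety $\mathcal X$ is an isomorphism, so $\tilde h\in\Aut(\mathcal X)$ and $\bar\varphi(\tilde h)=h$. The delicate point is the precise form of the double-cover classification over a merely normal base — working with Weil divisor classes rather than line bundles and checking that the branch locus is exactly the class being counted — which is where the cited references \cite{K24} and \cite{KPS18} do the work; alternatively one phrases everything via the reflexive rank-one sheaf $\mathcal L$ with $\varphi_*\ShO_{\mathcal X}\cong\ShO_{\mathcal Y}\oplus\mathcal L^{[-1]}$ and argues with the $\ShO_{\mathcal Y}$-algebra structure directly.
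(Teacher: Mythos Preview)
The paper does not supply its own proof of this proposition: it is quoted directly from \cite[Proposition 1.47]{K24} and \cite[Lemma 4.4.1]{KPS18}, with only the remark that the claim ``is stated over $\C$, but still holds over $k$''. So there is nothing in the paper to compare your argument against.

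That said, your argument is the standard one and is essentially what those references carry out. The three steps --- descending each $g\in\Aut(\mathcal X)$ along the equivariant map $\varphi$ to define $\bar\varphi$, identifying $\ker\bar\varphi$ with the Galois group of the degree-two extension $K(\mathcal X)/K(\mathcal Y)$ via normality of $\mathcal X$, and lifting $h\in\Aut(\mathcal Y;\mathcal Z)$ by pulling back the cover and invoking uniqueness of the double cover with given branch divisor under $\Cl(\mathcal Y)[2]=0$ --- are exactly the expected ingredients. Your caveat at the end is well placed: the classification of normal double covers over a merely normal (not smooth, not necessarily projective) base via reflexive rank-one sheaves and the equation $2\delta\sim B$ does require some care, and this is precisely what the cited sources handle. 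In the application in the paper ($\mathcal Y=\Line^1\times\Line^2$), the base is smooth and projective, so there is no difficulty.
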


\begin{prop}\label{aut of 2.18}
Let $\pi:X\to\Line^1\times\Line^2$
be a double cover branched in a smooth $(2,2)$-divisor $Z$ over $k$, and 
$\tau:X\to X$ be the non-trivial involution that trivially acts on $\Line^1\times\Line^2$. Then we have the exact sequence of groups
\[1\to \langle \tau \rangle\to \Aut(X)\overset{\bar{\pi}}{\to} \Aut(\Line^1\times\Line^2; Z)\to 1.\]
\end{prop}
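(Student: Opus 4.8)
The plan is to apply the descent result quoted just above (\cite[Proposition 1.47]{K24}, \cite[Lemma 4.4.1]{KPS18}) to the double cover $\pi\colon X\to\Line^1\times\Line^2$, whose branch divisor is $Z$; the proof then consists of checking the three hypotheses of that proposition. First, $X$ and $\Line^1\times\Line^2$ are normal varieties: $\Line^1\times\Line^2$ is smooth, and $X$ is smooth since $Z$ is smooth (cf.\ \cite[Lemma 2.51]{KM98}). Second, $\Cl(\Line^1\times\Line^2)\cong\Pic(\Line^1\times\Line^2)\cong\Z^{\oplus 2}$ is torsion-free, hence has no non-trivial $2$-torsion.

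The only substantive point is that $\pi$ is $\Aut(X)$-equivariant, i.e.\ that $\Aut(X)$ acts on $\Line^1\times\Line^2$ compatibly with $\pi$. This is exactly what Proposition \ref{two dc} provides when applied with $\mathcal{X}=\mathcal{X}'=X$: for an arbitrary $g\in\Aut(X)$, taking $\phi=g$ yields $\psi_1\in\Aut(\Line^1)$ and $\psi_2\in\Aut(\Line^2)$ with $(p_i\circ\pi)\circ g=\psi_i\circ(p_i\circ\pi)$ for $i=1,2$, so that $h_g:=\psi_1\times\psi_2\in\Aut(\Line^1\times\Line^2)$ satisfies $\pi\circ g=h_g\circ\pi$. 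Since $\pi$ is dominant, such an $h_g$ is unique, so $g\mapsto h_g$ defines a group homomorphism $\bar{\pi}\colon\Aut(X)\to\Aut(\Line^1\times\Line^2)$ compatible with $\pi$, which realises the required $\Aut(X)$-action on $\Line^1\times\Line^2$.

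With the three hypotheses in hand, the quoted proposition yields the exact sequence
\[1\to\langle\tau\rangle\to\Aut(X)\overset{\bar{\pi}}{\to}\Aut(\Line^1\times\Line^2;Z)\to 1,\]
where $\Aut(\Line^1\times\Line^2;Z)$ denotes the subgroup preserving the branch locus $Z$ (equivalently, the image under $\pi$ of the ramification locus), and $\tau$ is the covering involution; this is precisely the assertion. I expect the only point needing care to be the equivariance of $\pi$, and that in turn is supplied by Proposition \ref{two dc}, which uses $\rho(X)=2$ together with the fact that the two extremal contractions of $X$ have non-isomorphic general fibres (a conic versus a quadric surface) and so cannot be interchanged by an automorphism; everything else is a formal verification of the hypotheses of the descent result.
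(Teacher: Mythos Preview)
Your proof is correct and follows essentially the same approach as the paper: both arguments reduce the claim to the descent proposition from \cite{K24}/\cite{KPS18} after invoking Proposition \ref{two dc} to establish the $\Aut(X)$-equivariance of $\pi$. The paper's proof is simply terser, leaving the verification of normality and the torsion-freeness of $\Cl(\Line^1\times\Line^2)$ implicit, whereas you spell these out explicitly.
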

\begin{proof}
By Proposition \ref{two dc}, the double cover $\pi:X\to \Line^1\times \Line^2$ is $\Aut(X)$-equivariant. Therefore the assertion holds. 
\end{proof}

\begin{prop}(\cite[Lemma 8.3 and Lemma 2.1]{PCS19})\label{PCS19 Lemma}
Let $\pi:X\to\Line^1\times\Line^2$
be the double cover whose branched locus $Z$ is a smooth $(2,2)$-divisor over $k$.
Then the natural map
\[\Aut(\Line^1\times\Line^2; Z)\to \Aut(Z)\]
is injective.
\end{prop}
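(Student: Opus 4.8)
The plan is to show that an automorphism $g \in \Aut(\Line^1 \times \Line^2; Z)$ that restricts to the identity on $Z$ must itself be the identity. Since $g$ preserves $Z$, by Proposition \ref{two dc} (or rather the structure underlying it) $g$ is a product $g = (\psi_1, \psi_2)$ with $\psi_i \in \Aut(\Line^i)$; the content is to pin down $\psi_1$ and $\psi_2$ from the condition $g|_Z = \id$. First I would recall that $Z$ is a $(2,2)$-surface given by $t_0^2 Q_1 + 2 t_0 t_1 Q_2 + t_1^2 Q_3 = 0$, that the second projection $p_2 \colon Z \to \Line^2$ is the double cover branched along the smooth quartic $\Delta = \{Q_2^2 - Q_1 Q_3 = 0\}$, and that the first projection $p_1 \colon Z \to \Line^1$ is the quadric surface bundle. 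The key geometric input is that $p_1|_Z$ and $p_2|_Z$ are intrinsic to $Z$ (they are the two conic/quadric bundle structures, detected by the Mori cone as in Proposition \ref{two dc}), so $g|_Z = \id$ forces $\psi_1 = \id$ on the base $\Line^1$ of the first and $\psi_2 = \id$ on the base $\Line^2$ of the second, once we know $g$ respects both fibrations.

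The main step is therefore: $g|_Z = \id$ implies $\psi_2 = \id_{\Line^2}$. Here I would argue that the generic fiber of $p_2|_Z \colon Z \to \Line^2$ is a length-$2$ subscheme (two points for $p \notin \Delta$), and $g$ maps the fiber over $p$ to the fiber over $\psi_2(p)$; since $g|_Z$ fixes these points, the fibers over $p$ and $\psi_2(p)$ share a point, hence $\psi_2(p) = p$ for all $p$ outside $\Delta$, so $\psi_2 = \id$. The same argument with $p_1|_Z \colon Z \to \Line^1$ (whose generic fiber is a smooth conic, a positive-dimensional variety fixed pointwise by $g|_Z$) gives $\psi_1 = \id_{\Line^1}$: a fiber of $p_1|_Z$ fixed pointwise by $g$ must equal its image fiber, so $\psi_1(t) = t$ for all $t$. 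Thus $g = (\psi_1, \psi_2) = \id$ on $\Line^1 \times \Line^2$, proving injectivity. I expect the only subtlety is making precise that an element of $\Aut(\Line^1 \times \Line^2; Z)$ really splits as a product $(\psi_1, \psi_2)$ — this follows because $\Aut(\Line^1 \times \Line^2) = \Aut(\Line^1) \times \Aut(\Line^2)$ (the factors are not abstractly isomorphic), which is standard.

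Alternatively, and perhaps more cleanly given the tools already assembled, I would cite Proposition \ref{aut of 2.18} together with Theorem \ref{aut of del pezzo}: the composite $\Aut(\Line^1 \times \Line^2; Z) \hookrightarrow \Aut(Z)$ should be compatible with $\Aut(X) \twoheadrightarrow \Aut(\Line^1 \times \Line^2; Z)$ and $\Aut(Z) \twoheadrightarrow \Aut(\Delta)$; but since the self-contained fibration argument above is short, I would present that. The hard part, such as it is, is purely the bookkeeping that the two fibration structures on $Z$ are canonical and hence $g$-equivariant — everything else is a one-line fiber-counting argument. I would also note that this injectivity is exactly the input needed, in combination with Proposition \ref{aut of 2.18} and Theorem \ref{aut of del pezzo}, to deduce the exact sequence $0 \to \Z/2\Z \to \Aut(X) \to G \to 1$ with $G \subset \Aut(\Delta)$ in Corollary \ref{upper of aut of 2.18}.
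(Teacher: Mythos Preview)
The paper does not supply its own proof of this proposition: it is stated with a direct citation to \cite[Lemma 8.3 and Lemma 2.1]{PCS19} and no argument is given. So there is nothing in the paper to compare against beyond noting that the author treats this as a black-box input.

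Your self-contained argument is correct. It can be streamlined slightly: once you know $g=(\psi_1,\psi_2)\in\Aut(\Line^1)\times\Aut(\Line^2)$ (which, as you say, follows from the standard fact that $\Aut(\Line^1\times\Line^2)=\Aut(\Line^1)\times\Aut(\Line^2)$ since the factors are not isomorphic), the equality $g|_Z=\id_Z$ together with $p_i\circ g=\psi_i\circ p_i$ gives $\psi_i\circ (p_i|_Z)=p_i|_Z$. Since both projections $p_1|_Z:Z\to\Line^1$ and $p_2|_Z:Z\to\Line^2$ are surjective (the first is a quadric surface bundle, the second a double cover), this forces $\psi_i=\id_{\Line^i}$ immediately. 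The fiber-by-fiber bookkeeping you describe is equivalent but slightly heavier than necessary; you do not need to discuss the intrinsic nature of the two fibrations on $Z$ or invoke Proposition~\ref{two dc}, because the product decomposition of $g$ already guarantees it respects both projections on the nose.
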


To calculate to $\Aut(X)$, it is necessary to check when automorphisms of $Z$ lift to $\Line^1\times\Line^2$. 

\subsubsection*{Subgroups of $\Aut (\Line^1\times\Line^2) $ which preserves the branch locus $Z$}
From the above propositions, in order to compute the automorphism group of $X$, we need to compute $\Aut (\Line^1\times\Line^2; Z) $. An example of a non-trivial automorphism of $Z$ is a non-trivial involution $\sigma$ acting trivially on $\Line^2$. It is unclear whether the above involution $\sigma$ always exists, but if it does exist, it is unique. When $\Delta$ is smooth, we have the non-trivial involution associated to the double cover $Z\to \Line^2$, and it acts trivially on $\Line^2$. In this section, we consider the problem of when the non-trivial involution $\sigma$ on $Z$ that acts trivially on $\Line^2$ lifts to $\Line^1\times\Line^2$.

\begin{lem}\label{involution lemma}
Assume that a smooth $(2,2)$-surface $Z$ has the non-trivial involution $\sigma:Z\to Z$ acting trivially on $\Line^2$. The preimage of $\Line^2\setminus\{Q_1Q_3=0\}$ under the map $Z\to \Line^2$ is denoted by $U\subset Z$. We define 
\begin{align*}
\sigma':U\to U
;([t_0:t_1], [x:y:z])\mapsto ([Q_3t_1:Q_1t_0], [x:y:z]). 
\end{align*}
Then $\sigma=\sigma'$ on $U$.
\end{lem}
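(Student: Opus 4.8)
The plan is to verify directly that the displayed formula defines an involution $\sigma'$ of $U$ lying over the identity of $\Line^2$, to compute its effect on the function field $K(U)$, and then to deduce $\sigma|_U=\sigma'$ from the uniqueness of a non-trivial automorphism of a degree-$2$ cover.

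First I would produce $\sigma'$ as a genuine morphism $U\to U$ over $\Line^2$. Since $Z$ is a prime divisor, neither $Q_1$ nor $Q_3$ vanishes identically (otherwise the defining polynomial $t_0^2Q_1+2t_0t_1Q_2+t_1^2Q_3$ would be reducible or non-reduced), so $U$ is a nonempty dense open of $Z$ and $Q_1(x,y,z),Q_3(x,y,z)$ are nonzero at every point of $U$. At such a point one of $Q_1t_0$, $Q_3t_1$ is nonzero, so the pair of bihomogeneous forms $(Q_3t_1,\,Q_1t_0)$ of bidegree $(1,2)$ defines a morphism $U\to\Line^1$; combined with $p_2|_U$ it gives a morphism $U\to\Line^1\times\Line^2$ whose formula on points is the stated one. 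Substituting $([Q_3t_1:Q_1t_0],[x:y:z])$ into the equation of $Z$ gives
\[(Q_3t_1)^2Q_1+2(Q_3t_1)(Q_1t_0)Q_2+(Q_1t_0)^2Q_3=Q_1Q_3\bigl(t_0^2Q_1+2t_0t_1Q_2+t_1^2Q_3\bigr),\]
which vanishes when $([t_0:t_1],[x:y:z])\in Z$; since the $\Line^2$-coordinate is unchanged the image lands in $U$, so $\sigma'\colon U\to U$ is a morphism over $\Line^2$ (and as $Q_1Q_3$ is invertible on $U$ one checks at once that $\sigma'\circ\sigma'=\id_U$).

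Next I would identify $\sigma'$ on function fields. Because $Z$ is integral, so is $U$, and $p_2|_U\colon U\to W:=\Line^2\setminus\{Q_1Q_3=0\}$ is a finite morphism of degree $2$ of integral varieties; since $\ch k=0$ the extension $K(U)/K(W)$ is Galois with group $\Z/2\Z$. Over the generic point of $W$ the fibre of $p_2|_U$ is cut out in $\Line^1$ by the quadratic $Q_1s^2+2Q_2s+Q_3$ in $s=t_0/t_1$, which has distinct roots $s\neq s'$ with $ss'=Q_3/Q_1$; since $\sigma'$ sends $s=t_0/t_1$ to $Q_3t_1/(Q_1t_0)=s'$, it induces the non-trivial element of $\Gal(K(U)/K(W))$, and in particular $\sigma'\neq\id_U$. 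To conclude, note that an automorphism of $U$ over $W$ is determined by the automorphism it induces on $K(U)$ (two morphisms from the reduced variety $U$ to the separated scheme $U$ agreeing on a dense open coincide), so the $W$-automorphisms of $U$ embed into $\Gal(K(U)/K(W))\cong\Z/2\Z$. By hypothesis $\sigma$ acts trivially on $\Line^2$, hence preserves $U=p_2^{-1}(W)$ and is a $W$-automorphism of $U$, and $\sigma|_U\neq\id_U$ (otherwise $\sigma=\id_Z$ by density of $U$ in $Z$). Therefore $\sigma|_U$ and $\sigma'$ are each the unique non-trivial element of a group of order $2$, so they induce the same automorphism of $K(U)$ and coincide on $U$.

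The only real subtleties concern the locus of definition: the formula genuinely requires $Q_1Q_3\neq 0$, which is exactly why $U$ is taken to be the preimage of $\Line^2\setminus\{Q_1Q_3=0\}$, and the uniqueness step has to be run on function fields — equivalently over $\Line^2\setminus\Delta$, over which $Z\to\Line^2$ restricts to a connected \'etale double cover with automorphism group $\Z/2\Z$ — because $\sigma'$ in general does not extend to an automorphism of $Z$. Everything else is the routine polynomial identity displayed above.
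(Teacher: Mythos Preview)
Your proof is correct and follows the same strategy as the paper: verify that $\sigma'$ is a well-defined non-trivial involution of $U$ over $\Line^2$, then invoke uniqueness of the non-trivial deck transformation of a degree-$2$ cover to conclude $\sigma|_U=\sigma'$. The only cosmetic difference is that the paper shows $\sigma'\neq\id$ by contradiction (assuming $\sigma'=\id$ one derives $t_0^2t_1^2(Q_2^2-Q_1Q_3)=0$ on $U$, forcing $\Line^2\setminus\{Q_1Q_3=0\}\subset\Delta$) and leaves the uniqueness step implicit, whereas you compute directly that $\sigma'$ swaps the two roots of $Q_1s^2+2Q_2s+Q_3$ and make the Galois argument explicit.
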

\begin{proof}
For $([t_0:t_1],[x:y:z])\in U$, the equation
\[Q_1(t_1Q_3)^2+2Q_2(t_1Q_3)(t_0Q_1)+Q_3(t_0Q_1)^2=Q_1Q_3(Q_3t_1^2+2Q_2t_0t_1+Q_1t_0^2)=0\]
holds. Hence $\sigma'$ is well-defined. 

We have $\sigma'^2=\id$ on $U$, and $\langle \sigma'\>$ acts trivially on $\Line^2\setminus\{Q_1Q_3=0\}$. Suppose that $\sigma'$ is trivial. Then we have $Q_3t_1^2=t_0^2Q_1$. Since we get
\[Q_1t_0^2+Q_2t_0t_1=Q_2t_0t_1+Q_3t_1^2=0,  \]
the equation
\[t_0^2t_1^2(Q_2^2-Q_1Q_3)=0\]
holds on $U$. Hence we see that $\Line^2\setminus\{Q_1Q_3=0\}\subset\Delta$. Therefore the contradiction.
\end{proof}

\begin{thm}\label{lifting theorem}
The following are equivalent: 
\begin{itemize}
\item A smooth $(2,2)$-surface $Z$ has the non-trivial involution $\sigma$ acting trivially on $\Line^2$ and $\sigma$ lifts to $\Line^1\times\Line^2$.

\item \begin{itemize}
\item[(i)]There exist $a,b\in k$ such that $2Q_2=aQ_1+bQ_3$ and $ab\neq 1$, or
\item[(ii)]there exists $a\in k^*$ such that $Q_3=aQ_1$.
\end{itemize}
\end{itemize}

Moreover, if $Z$ has the non-trivial involution $\sigma$ acting trivially on $\Line^2$ and $\sigma$ lifts to $\Line^1\times\Line^2$, then we get 
\[\sigma=
\begin{cases}
\begin{pmatrix}
1&a\\
-b&-1
\end{pmatrix}\times \id_{\Line^2}&\text{if (i)}\\
\begin{pmatrix}
0&a\\
1&0
\end{pmatrix}\times \id_{\Line^2}&\text{if (ii).}
\end{cases}\]

\end{thm}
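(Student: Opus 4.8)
The statement is an ``if and only if'' together with an explicit description of the lifted involution, so I would prove it in two directions and then read off the formula for $\sigma$ along the way. The backbone of the argument is Lemma \ref{involution lemma}: on the dense open $U\subset Z$ the involution $\sigma$ must agree with the explicit map $\sigma'([t_0:t_1],[x:y:z])=([Q_3t_1:Q_1t_0],[x:y:z])$. So $\sigma$ lifts to $\Line^1\times\Line^2$ precisely when this birational self-map of $\Line^1\times\Line^2$ given on the first factor (with parameters depending on the second factor) extends to a genuine automorphism, i.e.\ to an element of $\PGL_2$ acting on $\Line^1$ that does not depend on the point of $\Line^2$. The computation is therefore really about when the $2\times 2$ matrix $\begin{pmatrix}0 & Q_1\\ Q_3 & 0\end{pmatrix}$ (acting projectively on $[t_0:t_1]$, sending $[t_0:t_1]\mapsto[Q_1t_1:Q_3t_0]$ — note $\sigma'$ sends $t_0\mapsto Q_3 t_1$ in the first slot, so the matrix is $\begin{pmatrix}0 & Q_3\\ Q_1 & 0\end{pmatrix}$) is, up to a scalar that may involve $x,y,z$, a constant matrix.

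\textbf{Direction ($\Leftarrow$).} Here I would start from (i) or (ii) and produce the lift explicitly, which simultaneously establishes the displayed formula. In case (ii), $Q_3=aQ_1$ with $a\in k^*$; substituting into $\sigma'$ gives $([aQ_1 t_1 : Q_1 t_0],[x:y:z])=([a t_1:t_0],[x:y:z])$ on $U$, which is the restriction of the honest automorphism $\begin{pmatrix}0 & a\\ 1 & 0\end{pmatrix}\times\id_{\Line^2}$ of $\Line^1\times\Line^2$; one checks it preserves $Z$ by plugging into $t_0^2Q_1+2t_0t_1Q_2+t_1^2Q_3=0$ and using $Q_3=aQ_1$. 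In case (i), $2Q_2=aQ_1+bQ_3$; I would verify directly that $M=\begin{pmatrix}1 & a\\ -b & -1\end{pmatrix}$ (note $\det M=ab-1\neq 0$, which is where $ab\neq 1$ enters, so $M\in\GL_2$) gives $M\times\id_{\Line^2}$ preserving $Z$: substitute $t_0\mapsto t_0+at_1$, $t_1\mapsto -bt_0-t_1$ into the defining equation and collect terms, using $2Q_2=aQ_1+bQ_3$ to see the $(2,2)$-form is scaled by $\det M$ (or by $(ab-1)$ up to a square), hence $Z$ is fixed. Then I must check $M\times\id_{\Line^2}$ restricts on $U$ to $\sigma'$, equivalently that $M^2$ is scalar (it is: $M^2=(1-ab)I$) and that $M$ does the right thing on the fibre over a general $[x:y:z]$ — comparing with $\sigma'$ amounts to the identity $\begin{pmatrix}1 & a\\ -b & -1\end{pmatrix}\begin{pmatrix}t_0\\ t_1\end{pmatrix}\sim\begin{pmatrix}Q_3 t_1\\ Q_1 t_0\end{pmatrix}$ in the fibre, which follows from $Q_3 t_1^2+2Q_2t_0t_1+Q_1t_0^2=0$ and $2Q_2=aQ_1+bQ_3$ by elementary manipulation. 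Since an automorphism of $Z$ is determined by its restriction to the dense open $U$, this shows $\sigma$ lifts and equals the stated matrix.

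\textbf{Direction ($\Rightarrow$).} Suppose $\sigma$ lifts to an automorphism of $\Line^1\times\Line^2$; by Proposition \ref{two dc}-type reasoning (the factors are fixed as it acts trivially on $\Line^2$) this lift is $N\times\id_{\Line^2}$ for some fixed $N=\begin{pmatrix}\alpha & \beta\\ \gamma & \delta\end{pmatrix}\in\GL_2(k)$, with $N^2$ scalar (as $\sigma^2=\id$). On $U$ this must coincide with $\sigma'$, so for every $[x:y:z]$ outside $\{Q_1Q_3=0\}$ and every $[t_0:t_1]$ in the conic fibre we need $[\alpha t_0+\beta t_1:\gamma t_0+\delta t_1]=[Q_3 t_1:Q_1 t_0]$. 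Clearing the projective ambiguity by a scalar $\lambda(x,y,z,t_0,t_1)$ and using that the fibre is a conic, I would deduce the proportionality of the quadratic forms $(\alpha t_0+\beta t_1)(Q_1 t_0)$ and $(\gamma t_0+\delta t_1)(Q_3 t_1)$ modulo the fibre relation $Q_1t_0^2+2Q_2t_0t_1+Q_3t_1^2=0$; concretely, $\alpha Q_1 t_0^2+\beta Q_1 t_0 t_1 \equiv \gamma Q_3 t_0 t_1+\delta Q_3 t_1^2 \pmod{Q_1t_0^2+2Q_2t_0t_1+Q_3t_1^2}$. Matching coefficients of $t_0^2,t_0t_1,t_1^2$ as polynomials in $x,y,z$ gives a small linear system relating $\alpha Q_1,\ \beta Q_1 - \gamma Q_3,\ \delta Q_3$ to a multiple $\mu\cdot(Q_1,2Q_2,Q_3)$. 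The case $\mu=0$ forces $\alpha Q_1=\delta Q_3=0$ and $\beta Q_1=\gamma Q_3$; since $N$ is invertible one gets $\alpha=\delta=0$ and $\beta Q_1=\gamma Q_3$ with $\beta,\gamma\neq 0$, i.e.\ $Q_3=(\beta/\gamma)Q_1$, which is case (ii) (and recovers $N=\begin{pmatrix}0 & a\\ 1 & 0\end{pmatrix}$ up to scalar). The case $\mu\neq 0$ gives $\alpha Q_1=\mu Q_1$, $\delta Q_3=\mu Q_3$, hence $\alpha=\delta=\mu$ (as $Q_1,Q_3\neq 0$), so $N$ is a scalar plus the off-diagonal part; normalizing $\mu=1$, $2Q_2 = \beta Q_1 - \gamma Q_3$ wait — I mean $\beta Q_1 - \gamma Q_3 = 2\mu Q_2$, i.e.\ $2Q_2 = \beta Q_1 - \gamma Q_3$; setting $a=\beta$, $b=-\gamma$ gives $2Q_2=aQ_1+bQ_3$ and $N=\begin{pmatrix}1 & a\\ -b & -1\end{pmatrix}$ (after scaling so $\mu=1$), with $\det N=ab-1\neq 0$ forcing $ab\neq 1$. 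Finally $\sigma$ is non-trivial: by Lemma \ref{involution lemma} triviality of $\sigma'$ would force $\Line^2\setminus\{Q_1Q_3=0\}\subset\Delta$, impossible for a smooth quartic $\Delta$ of degree $4$; so the lift we produced genuinely has order $2$, completing the equivalence and the formula.

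\textbf{Main obstacle.} The delicate point is the ($\Rightarrow$) direction: translating ``$N\times\id_{\Line^2}$ agrees with $\sigma'$ on the dense open $U$'' into an honest identity of polynomials in $x,y,z$. One must carefully handle the scalar ambiguity in $\Line^1$ — the factor $\lambda$ could a priori depend on $x,y,z$ — and the fact that the identity only needs to hold \emph{on the conic fibre}, i.e.\ modulo $Q_1t_0^2+2Q_2t_0t_1+Q_3t_1^2$, rather than identically in $t_0,t_1$. Getting the bookkeeping right so that the resulting linear system cleanly splits into the two cases (i) and (ii), and not missing degenerate sub-cases (e.g.\ one of $Q_1,Q_3$ vanishing, or $N$ diagonal), is where the real care lies; the $ab\neq 1$ condition and the exact shape of the matrices must be tracked throughout, and the non-triviality of $\sigma$ must be invoked via the smooth-quartic hypothesis exactly as in Lemma \ref{involution lemma}.
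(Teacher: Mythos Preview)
Your proposal is correct and follows essentially the same route as the paper. In the forward direction both you and the paper cross-multiply the projective equality $[\alpha t_0+\beta t_1:\gamma t_0+\delta t_1]=[Q_3t_1:Q_1t_0]$ to obtain a bidegree-$(2,2)$ form vanishing on $Z$, hence a scalar multiple $\mu$ of the defining form; the paper phrases this as $Z\subset Z'$ and splits according to whether $Z'$ is a proper divisor or all of $\Line^1\times\Line^2$, which is exactly your $\mu\neq 0$ versus $\mu=0$ dichotomy. The one small difference is in the backward direction for case (i): you check that $M\times\id_{\Line^2}$ preserves $Z$ by substituting $t_0\mapsto t_0+at_1$, $t_1\mapsto -bt_0-t_1$ and seeing the form scale by $(1-ab)$, whereas the paper verifies $\sigma(Z)\subset Z$ stratum by stratum on $\{Q_1=Q_3=0\}$, $\{Q_1Q_3\neq 0\}$, $\{Q_1=0,Q_3\neq 0\}$, $\{Q_3=0,Q_1\neq 0\}$. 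Your computation is cleaner.

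Two cosmetic points: in your $\mu\neq 0$ case you wrote $\alpha=\delta=\mu$, but comparing $t_1^2$-coefficients gives $-\delta Q_3=\mu Q_3$, so $\delta=-\mu$; your final matrix $\begin{pmatrix}1&a\\-b&-1\end{pmatrix}$ is nonetheless correct. Also, the fibre of $Z\to\Line^2$ over a general point is two points in $\Line^1$, not a conic, so ``modulo the conic fibre'' should read ``modulo the defining $(2,2)$-form of $Z$''; the argument is unaffected.
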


\begin{proof}
Assume that $Z$ has the non-trivial involution $\sigma$ acting trivially on $\Line^2$ and $\sigma$ lifts to $\Line^1\times\Line^2$. 
Then there exists a matrix $M=\begin{pmatrix}a_{11} &a_{12}\\a_{21}&a_{22}\end{pmatrix}\in \GL_2(k)$ such that $\sigma=M\times \id$. By Lemma \ref{involution lemma}, we have $\sigma'=M\times \id$ on $Z\setminus\pi_2^{-1}(\{Q_1Q_3=0\})$. where $\sigma'$ is the involution in Lemma \ref{involution lemma}. Hence, $Z$ is contained in 
\[Z':=\{Q_1t_0(a_{11}t_0+a_{12}t_1)=Q_3t_1(a_{21}t_0+a_{22}t_1)\}.\]
Since $\sigma'=M\times \id$ on $Z\setminus\{Q_1Q_3=0\}$, for any $([t_0:t_1], [x:y:z])\in Z\setminus\{Q_1Q_3=0\}$, we have $[a_{11}t_0+a_{12}t_1:a_{21}t_0+a_{22}t_1]=[Q_3t_1:Q_1t_0]$, and 
\[Q_1t_0(a_{11}t_0+a_{12}t_1)=Q_3t_1(a_{21}t_0+a_{22}t_1).\]
Hence we have 
\[Z\setminus\{Q_1Q_3=0\}\subset Z'=\{Q_1t_0(a_{11}t_0+a_{12}t_1)=Q_3t_1(a_{21}t_0+a_{22}t_1)\}.\]
As $Z$ is irreducible, we get $Z\subset Z'$.

If $Z'\subsetneq \Line^1\times\Line^2$, then there exists $c\in k^*$ such that
\begin{align*}
&Q_1t_0(a_{11}t_0+a_{12}t_1)-Q_3t_1(a_{21}t_0+a_{22}t_1)\\
&=c(Q_1t_0^2+2Q_2t_0t_1+Q_3t_1^2)
\end{align*}
in $ k[x,y,z]\otimes k[t_0,t_1]$. Set $a:=\frac{a_{12}}{c}, b=-\frac{a_{21}}{c}$. Then we get 
\[2Q_2=aQ_1+bQ_3\]
and 
\[\sigma=\begin{pmatrix}
c&ca\\
-cb&-c
\end{pmatrix}\times \id_{\Line^2}=\begin{pmatrix}
1&a\\
-b&-1
\end{pmatrix}\times \id_{\Line^2}.
\]
In addition, we obtain $ab\neq 1$, $\sigma\neq \id$, and $\sigma^2=\id$.
If $Z'=\Line^1\times\Line^2$, we obtain $Q_3=aQ_1$ and 
\[\sigma=\begin{pmatrix}
0&a_{12}\\
a_{21}&0
\end{pmatrix}\times \id_{\Line^2}=\begin{pmatrix}
0&a\\
1&0
\end{pmatrix}\times \id_{\Line^2},\]
where $a=\frac{a_{12}}{a_{21}}$.

Assume that (i) holds. Let $\sigma\in \Aut(\Line^1\times\Line^2)$ be the non-trivial involution
\[\begin{pmatrix}
1&a\\
-b&-1
\end{pmatrix}\times \id_{\Line^2}.\]
By $2Q_2=aQ_1+bQ_3$, we have $\sigma(Z\cap\{Q_1=Q_3=0\})\subset Z$. 
For any $P=([t_0:t_1], [x:y:z])\in Z\setminus\pi_2^{-1}(\{Q_1Q_3=0\})$, we have 
\[\sigma(P)=([Q_3t_1:Q_1t_0], [x:y:z]).\]
Hence, $\sigma(Z\setminus \{Q_1Q_3= 0\})\subset Z$. Since $Z\cap\{Q_3=0\}\cap\{Q_1\neq 0\}$ is contained in 
\[\{([0:1],[x:y:z]), ([-a:1],[x:y:z])|Q_3(x,y,z)=0\}\subset Z,\]
and we have
\[\sigma([0:1],[x:y:z])=([-a:1],[x:y:z]),\]\[ \sigma([-a:1],[x:y:z])=([0:1],[x:y:z]), \]
we get $\sigma(Z\cap\{Q_3=0\}\cap\{Q_1\neq 0\})\subset Z$. 
Similarly, we obtain $\sigma(Z\cap\{Q_1=0\}\cap\{Q_3\neq 0\})\subset Z$. Therefore we have a non-trivial involution $\sigma$ on $Z$, $\sigma$ acts trivially on $\Line^2$, and $\sigma$ lifts to $\Line^1\times\Line^2$.
If (ii) holds, then the non-trivial involution 
\[\sigma=\begin{pmatrix}
0&a\\
1&0
\end{pmatrix}\times \id_{\Line^2}\]
on $\Line^1\times\Line^2$ induces a non-trivial involution $\sigma$ on $Z$. The isomorphism $\sigma$ acts trivially on $\Line^2$, and $\sigma$ lifts to $\Line^1\times\Line^2$. Therefore, if (i) or (ii) hold, then $Z$ has a non-trivial involution $\sigma$ acting trivially on $\Line^2$ and $\sigma$ lifts to $\Line^1\times\Line^2$.
\end{proof}

\begin{cor}\label{involution does not lift}
Assume that a smooth $(2,2)$-surface $Z$ has the non-trivial involution $\sigma$ on $Z$, acting trivially on $\Line^2$. If $\Delta$ is irreducible, $\sigma$ does not lift to $\Line^1\times \Line^2$. In particular, when $\Delta$ is smooth, the non-trivial involution $\sigma$ associated to the double cover $Z\to \Line^2$ does not lift to $\Line^1\times \Line^2$.
\end{cor}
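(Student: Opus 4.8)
The plan is a proof by contradiction resting on Theorem~\ref{lifting theorem}. Suppose that $\sigma$ does lift to $\Line^1\times\Line^2$. Then one of the numerical conditions (i), (ii) of Theorem~\ref{lifting theorem} holds, and I will show that each of them forces the plane quartic $\Delta=\{Q_2^2-Q_1Q_3=0\}$ to be reducible, contradicting the hypothesis; the contrapositive is exactly the assertion.

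For condition (ii), where $Q_3=aQ_1$ with $a\in k^*$: since $k$ is algebraically closed I may write $a=\alpha^2$ with $\alpha\in k^*$ and factor
\[Q_2^2-Q_1Q_3=Q_2^2-\alpha^2Q_1^2=(Q_2-\alpha Q_1)(Q_2+\alpha Q_1),\]
exhibiting $\Delta$ as a union of two distinct conics. For condition (i), where $2Q_2=aQ_1+bQ_3$ with $ab\neq 1$: using $\ch k=0$ to clear the factor $2$, I compute
\[4(Q_2^2-Q_1Q_3)=(aQ_1+bQ_3)^2-4Q_1Q_3=a^2Q_1^2+(2ab-4)Q_1Q_3+b^2Q_3^2,\]
a binary quadratic form in $Q_1,Q_3$ whose discriminant works out to $16(1-ab)$. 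The condition $ab\neq 1$ recorded in Theorem~\ref{lifting theorem}(i) is precisely what makes this discriminant nonzero, so over the algebraically closed field $k$ the form splits into two non-proportional linear factors $c_iQ_1+d_iQ_3$; substituting back displays $\Delta$ again as a union of two conics. In either case $\Delta$ is reducible, which is the desired contradiction.

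The \emph{in particular} clause then follows immediately: the deck involution of the degree-$2$ cover $Z\to\Line^2$ is a non-trivial involution acting trivially on $\Line^2$, so the statement just proved applies to it verbatim.

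I do not expect a serious obstacle here; the only point requiring mild care is the small amount of bookkeeping needed to rule out the degenerate sub-cases in which $Q_1$ and $Q_3$ (and then, via (i) or (ii), all of $Q_1,Q_2,Q_3$) become pairwise proportional, since in those sub-cases the linear combinations above could vanish identically as forms in $x,y,z$. But in such sub-cases the bidegree-$(2,2)$ polynomial $t_0^2Q_1+2t_0t_1Q_2+t_1^2Q_3$ itself factors, so $Z$ would fail to be a prime divisor, contradicting the standing assumption that $Z$ is a $(2,2)$-surface. Hence those sub-cases never occur, and the factorizations above genuinely produce two honest conics, so $\Delta$ is reducible and the argument goes through.
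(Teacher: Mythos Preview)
Your argument is correct and is exactly the intended one: the paper states the corollary without proof, as an immediate consequence of Theorem~\ref{lifting theorem}, and your computation showing that each alternative (i), (ii) forces $Q_2^2-Q_1Q_3$ to factor as a product of two conics is precisely the missing verification. The discriminant calculation in case (i) and the handling of the degenerate proportional sub-cases via primality of $Z$ are both fine.
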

\begin{proof}
Note that the quartic curve $\Delta:Q_2^2-Q_1Q_3=0\subset \Line^2$ is reduced. 
Assume that a smooth $(2,2)$-surface $Z$ has the non-trivial involution $\sigma$ acting trivially on $\Line^2$ and $\sigma$ lifts to $\Line^1\times\Line^2$. By Theorem \ref{lifting theorem}, quadric forms $Q_1,Q_2$, and $Q_3$ satisfy one of the followings:
\begin{itemize}
\item[(i)]There exist $a,b\in k$ such that $2Q_2=aQ_1+bQ_3$ and $ab\neq 1$, or
\item[(ii)]there exists $a\in k^*$ such that $Q_3=aQ_1$.
\end{itemize}
We consider the case (i). When $a=0$, we get 
\[Q_2^2-Q_1Q_3=Q_3\left(\frac{b^2}{4}Q_3-Q_1\right).\]
When $a\neq 0$, we get 
\[Q_2^2-Q_1Q_3=\frac{a^2}{4}(Q_1-s_1Q_3)(Q_1-s_2Q_3),\]
for some $s_1,s_2\in k$. Therefore $\Delta$ is reducible.

We consider the case (ii). Then $Q_2^2-Q_1Q_3=(Q_2-\sqrt{a}Q_1)(Q_2+\sqrt{a}Q_1)$, and $\Delta$ is reducible. 
\end{proof}

\subsubsection*{Examples of smooth $(2,2)$-surfaces with a non-trivial involution acting trivially on $\Line^2$}
Let $\Delta$ be a quartic plane curve. If $\Delta$ has $4$ singular points which are $A_1$ and $\Delta$ consists of two non-degenerate conics, by coordinate changing, $\Delta$ is given by
 \[(yz+xz+xy)(\alpha yz+\beta xz +xy)=0, \]
 where $\alpha\beta(\alpha-1)(\beta-1)(\alpha-\beta)\neq 0$ (see \cite{Hui79} and \cite[Table 2]{vBDL25}). The singular points of $\Delta$ are \[P_1=[1:0:0], P_2=[0:1:0], P_3=[0:0:1], P_4=\left[-\frac{\beta-\alpha}{\beta-1}:\frac{\beta-\alpha}{\alpha-1}:1\right].\]
We claim that there exists a smooth Fano threefold $X$ of \textnumero 2.18 whose discriminant curve is a quartic plane curve $\Delta$ such that a smooth $(2,2)$-surface has the non-trivial involution which acts trivially on $\Line^2$ and lifts to $\Line^1\times\Line^2$.

\begin{example}\label{No.2.18 with reducible quartic}
Assume that $a,b,\alpha, \beta, \lambda, s\in k^*$ satisfy
\begin{itemize}
\item $ab\neq 1, 2$,
\item $s^2+\left(2-\frac{4}{ab}\right)s+1=0$,
\item $(\alpha-1)(\beta-1)(\alpha-\beta)\neq 0$.
\end{itemize}
Set
\[\begin{pmatrix}
Q_1'\\Q_3'
\end{pmatrix}=\frac{1}{\lambda(1-s^2)}\begin{pmatrix}
1&-s^2\\
s&-s
\end{pmatrix}\begin{pmatrix}
\lambda^2(yz+xz+xy)\\
\alpha yz+\beta xz+xy\end{pmatrix},\]
\[Q_1=\frac{2}{a}Q_1', Q_2=Q_1'+Q_3', Q_3=\frac{2}{b}Q_3'.\]
Then 
\[Z:t_0^2Q_1+2t_0t_1Q_2+t_1^2Q_3=0\subset \Line^1_{[t_0:t_1]}\times \Line^2_{[x:y:z]}\]
is smooth, and 
\[Q_2^2-Q_1Q_3=(yz+xz+xy)(\alpha yz+\beta xz +yx).\]
Moreover, $Z$ has the non-trivial involution
\[\sigma=\begin{pmatrix}
1&a\\
-b&-1
\end{pmatrix}\times \id_{\Line^2}.
\]

\end{example}

\begin{proof}
Since
\begin{align*}
\begin{pmatrix}
1&-s\\1&-\frac{1}{s}
\end{pmatrix}\begin{pmatrix}
Q_1'\\Q_3'
\end{pmatrix}
&=
\frac{1}{\lambda(1-s^2)}\begin{pmatrix}
1&-s\\1&-\frac{1}{s}
\end{pmatrix}\begin{pmatrix}
1&-s^2\\
s&-s
\end{pmatrix}\begin{pmatrix}
\lambda^2(yz+xz+xy)\\
\alpha yz+\beta xz+xy\end{pmatrix}\\
&=
\begin{pmatrix}
\lambda(yz+xz+xy)\\
\frac{1}{\lambda}(\alpha yz+\beta xz+xy)\end{pmatrix},
\end{align*}
we get
\begin{align*}
Q_2^2-Q_1Q_3&=Q_1'^2+\left(2-\frac{4}{ab}\right)Q_1'Q_3'+Q_3'^2\\
&=(Q_1'-sQ_3')\left(Q_1'-\frac{1}{s}Q_3'\right)\\
&=(yz+xz+xy)(\alpha yz+\beta xz +yx).
\end{align*}
We prove that $Z$ is smooth. By \cite[Lemma 2.15]{DJK24}, it is enough to show that $Z$ is non-singular on $\Line^1\times\{P_1,P_2,P_3,P_4\}$. Here \[P_1=[1:0:0], P_2=[0:1:0], P_3=[0:0:1], P_4=\left[-\frac{\beta-\alpha}{\beta-1}:\frac{\beta-\alpha}{\alpha-1}:1\right].\] 
Define 
\[F=t_0^2Q_1+2t_0t_1Q_2+t_1^2Q_3\in k[t_0:t_1]\otimes k[x:y:z],\]
\[p=\frac{1}{a}t_0^2+(s+1)t_0t_1+\frac{s}{b}t_1^2\in k[t_0:t_1],\]
\[q=\frac{s}{a}t_0^2+(s+1)t_0t_1+\frac{1}{b}t_1^2\in k[t_0:t_1].\]
By direct computation, we obtain the following table:\\
\begin{tabular}{|l|r|r|r|r|} \hline
   $ $&$P_1$&$P_2$&$P_3$&$P_4$\\ \hline \hline
$\frac{1}{2}\lambda(1-s^2)\frac{\partial F}{\partial x}$&$ $&$\lambda^2p-sq$&$\lambda^2p-s\beta q$&$\frac{\beta-1}{\alpha -1}(\lambda^2p-s\alpha q)$\\\hline
$\frac{1}{2}\lambda(1-s^2)\frac{\partial F}{\partial y}$&$\lambda^2p-sq$&$ $&$\lambda^2p-s\alpha q$&$\frac{\alpha-1}{\beta -1}(\lambda^2p-s\beta q)$\\\hline
$\frac{1}{2}\lambda(1-s^2)\frac{\partial F}{\partial z}$&$\lambda^2 p-s\beta q$&$\lambda^2p-s\alpha q$&$ $&$ $\\\hline
\end{tabular}\\
Suppose that $([t_0:t_1],P_i)$ is a singular point of $Z$ for some $i$. As $s\lambda^2(\alpha-1)(\beta-1)(\alpha-\beta)\neq 0$, we obtain $p=q=0$. Since $t_0, 1-s^2\neq 0$, and
\[0=p-qs=\frac{1-s^2}{a}t_0^2+(1-s^2)t_0t_1, \]
we get $[t_0:t_1]=[-a:1]$. By $p([-a:1])=0$, we have $ab=1$. Hence the contradiction.
\end{proof}

\subsubsection*{Examples of Fano threefolds of \textnumero 2.18 with smooth plane quartics}
We introduce properties and examples of automorphism groups of Fano threefolds of \textnumero 2.18 with smooth plane quartics. The examples presented in this section were considered in \cite{CTT}. In section, $\pi:X\to\Line^1\times\Line^2$ is a double cover branched a smooth $(2,2)$-divisor $Z$ over $k$, and $\Delta$ is discrminat curve of the conic bundle $\pi_2:X\to \Line^1\times\Line^2\to \Line^2$. 
\begin{prop}\label{iinjective}
Assume that $\Delta$ is smooth. Then the group morphism 
\[\Aut(\Line^1\times\Line^2; Z)\to \Aut(\Delta),\]
induced by $\pi_2$, is injective.
\end{prop}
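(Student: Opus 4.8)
The plan is to combine Proposition~\ref{PCS19 Lemma}, which gives an injection $\Aut(\Line^1\times\Line^2;Z)\hookrightarrow\Aut(Z)$, with Theorem~\ref{aut of del pezzo}: when $\Delta$ is smooth, $Z$ is a del Pezzo surface of degree $2$ and $\Aut(Z)$ sits in an extension $1\to\langle\sigma\rangle\to\Aut(Z)\to\Aut(\Delta)\to1$, where $\sigma$ is the Geiser involution (the non-trivial involution of the double cover $Z\to\Line^2$), which acts trivially on $\Line^2$. The map $\Aut(\Line^1\times\Line^2;Z)\to\Aut(\Delta)$ in the statement is exactly the composite $\Aut(\Line^1\times\Line^2;Z)\hookrightarrow\Aut(Z)\to\Aut(\Delta)$, whose kernel is $\Aut(\Line^1\times\Line^2;Z)\cap\langle\sigma\rangle$. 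So it suffices to show that $\sigma$ itself does not lie in the image of $\Aut(\Line^1\times\Line^2;Z)$ inside $\Aut(Z)$; equivalently, $\sigma$ does not lift to an automorphism of $\Line^1\times\Line^2$.

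The key step is therefore to invoke Corollary~\ref{involution does not lift}: when $\Delta$ is irreducible, the non-trivial involution of $Z$ acting trivially on $\Line^2$ does not lift to $\Line^1\times\Line^2$. Since a smooth plane quartic is irreducible, the hypothesis applies, and we conclude that the Geiser involution $\sigma$ is not in the image of $\Aut(\Line^1\times\Line^2;Z)$. Hence the kernel of $\Aut(\Line^1\times\Line^2;Z)\to\Aut(\Delta)$ is trivial, i.e.\ the map is injective.

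One point to be careful about: I must check that the composite $\Aut(\Line^1\times\Line^2;Z)\hookrightarrow\Aut(Z)\to\Aut(\Delta)$ genuinely agrees with the map ``induced by $\pi_2$'' in the statement. An element $g\in\Aut(\Line^1\times\Line^2;Z)$ preserves $Z$, hence restricts to an automorphism $g|_Z\in\Aut(Z)$; by Theorem~\ref{aut of del pezzo} this descends, via $Z\to\Line^2$, to an automorphism of $\Line^2$ preserving $\Delta$, which is precisely the $\psi_2$-component of $g$ under $\Aut(\Line^1\times\Line^2)=\Aut(\Line^1)\times\Aut(\Line^2)$ (using Proposition~\ref{two dc}). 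So the two descriptions coincide, and the kernel computation above is the kernel of the stated map. I expect the main obstacle to be purely bookkeeping: making sure the several injectivity/lifting statements are stitched together with the right identifications (in particular that ``acts trivially on $\Line^2$'' is the same condition in Theorem~\ref{aut of del pezzo} and in Corollary~\ref{involution does not lift}), rather than any substantive new argument.
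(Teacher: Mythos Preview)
Your proposal is correct and follows essentially the same route as the paper: take an element in the kernel, use Proposition~\ref{PCS19 Lemma} to view it inside $\Aut(Z)$, use Theorem~\ref{aut of del pezzo} to force it into $\langle\sigma\rangle$, and then apply Corollary~\ref{involution does not lift} (with smoothness of $\Delta$ giving irreducibility) to conclude it is trivial. Your extra bookkeeping paragraph verifying that the composite through $\Aut(Z)$ agrees with the map induced by $\pi_2$ is a welcome clarification but not a departure from the paper's argument.
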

\begin{proof}
Since $\Delta$ is smooth, the $(2,2)$-surface $Z$ has the non-trivial involution $\sigma:Z\to Z$ acts trivially on $\Line^2$.
Let $\alpha\in \Aut(\Line^1\times\Line^2;Z)$ be an automorphism acting trivially on $\Line^2$ (and $\Delta$). By Theorem \ref{aut of del pezzo}, we get $\alpha\in \langle\sigma\>$ via the injective $\Aut(\Line^1\times\Line^2;Z)\inj\Aut(Z)$ in Proposition \ref{PCS19 Lemma}. By Corollary \ref{involution does not lift}, the automorphism $\alpha$ is trivial. 
\end{proof}

\begin{cor}\label{upper of aut of 2.18}
Assume that $\Delta$ is smooth. 
Then there exists a subgroup $G\subset \Aut(\Delta)$ such that we have an exact sequence of groups 
\[0\to  \Z/2\Z\to \Aut(X)\to  G\to 1.\]

\end{cor}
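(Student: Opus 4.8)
The plan is to obtain the asserted sequence by combining the exact sequence of Proposition \ref{aut of 2.18} with the injectivity of Proposition \ref{iinjective}. First, recall that Proposition \ref{aut of 2.18} produces, for every smooth Fano threefold $X$ of \textnumero 2.18 (with no hypothesis on $\Delta$), an exact sequence
\[1 \to \langle \tau \rangle \to \Aut(X) \xrightarrow{\bar\pi} \Aut(\Line^1\times\Line^2; Z) \to 1,\]
where $\tau$ is the deck involution of the double cover $\pi$. Since $\tau$ is a non-trivial involution, $\langle \tau \rangle$ is cyclic of order $2$, so $\langle \tau \rangle \cong \Z/2\Z$.

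Next I would invoke the hypothesis that $\Delta$ is smooth. By Proposition \ref{iinjective}, the homomorphism $\Aut(\Line^1\times\Line^2; Z) \to \Aut(\Delta)$ induced by the conic bundle projection $\pi_2$ is injective. Set $G$ to be its image; then $G$ is a subgroup of $\Aut(\Delta)$ and the induced map $\Aut(\Line^1\times\Line^2; Z) \to G$ is a group isomorphism. Composing the surjection $\bar\pi$ with this isomorphism gives a surjective homomorphism $\Aut(X) \twoheadrightarrow G$, and replacing the codomain of a surjection by an isomorphic group changes neither surjectivity nor the kernel, so the kernel is still $\langle \tau \rangle \cong \Z/2\Z$. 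Hence we obtain
\[0 \to \Z/2\Z \to \Aut(X) \to G \to 1,\]
as desired.

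Regarding difficulty: there is essentially no new obstacle at this stage, since the corollary is a purely formal consequence of the two cited propositions; the only things to verify are that $\langle \tau \rangle \cong \Z/2\Z$ and that restricting the target of $\bar\pi$ along an isomorphism preserves exactness, both of which are immediate. The genuine content sits upstream: Proposition \ref{iinjective} rests on Corollary \ref{involution does not lift} (the del Pezzo involution $\sigma$ of $Z$ does not lift to $\Line^1\times\Line^2$, in particular when $\Delta$ is smooth, hence irreducible) together with Theorem \ref{aut of del pezzo}, while Proposition \ref{aut of 2.18} uses the $\Aut(X)$-equivariance of $\pi$ coming from Proposition \ref{two dc} and the absence of non-trivial $2$-torsion in $\Cl(\Line^1\times\Line^2)$. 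If anything requires care in the write-up, it is only to state cleanly that $G$ is \emph{defined} as the image, so that the existential claim "there exists a subgroup $G\subset\Aut(\Delta)$" is literally met.
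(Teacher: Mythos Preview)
Your proposal is correct and is exactly the argument the paper intends: the corollary is stated immediately after Proposition~\ref{iinjective} without proof, and the implicit derivation is precisely the combination of the exact sequence of Proposition~\ref{aut of 2.18} with the injectivity of Proposition~\ref{iinjective}, taking $G$ to be the image in $\Aut(\Delta)$.
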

The following examples are Fano threefolds of \textnumero 2.18 with smooth quartics for which $|\Aut(X)|=2\times |\Aut(\Delta)|$ holds.

\begin{example}
Let quadrics $Q_1,Q_2$, and $Q_3$ be $\sqrt{-1}(y^2+xz), x^2+ayz$ and $\sqrt{-1}(z^2+xy)$ respectively. Here $a\in k$. Assume that $Q_2^2-Q_1Q_3$ is of the form
\[x^4+x(y^3+z^3)+\alpha yzx^2+\beta y^2z^2\]
and $\Delta=\{Q_2^2-Q_1Q_3=0\}$ is smooth. 
Here $\alpha, \beta$ satisfy $\alpha\neq \beta$ and $\alpha\beta\neq 0$. For instance, when $a=1$, we get $\beta=2$, $\alpha=3$, and the assumptions hold. 
Then $\Aut(\Line^1\times\Line^2; Z)\cong \mathfrak{S}_3$. Therefore $|\Aut(X)|=12$.
\end{example}

\begin{proof}
By \cite[Theorem 6.5.2 Type IX]{CAG}(or \cite[Theorem 16]{B08}), we have \[\Aut(\Delta)\cong \mathfrak{S}_3.\]
The group $\Aut(\Line^1\times\Line^2; Z)$ contains the group, generated by
\[\left( 
\begin{pmatrix}
\zeta_3&0\\
0&1
\end{pmatrix},\begin{pmatrix}
1&0&0\\
0&\zeta_3&0\\
0&0&\zeta_3^2
\end{pmatrix}
\right)\text{\,and \,}
\left(\begin{pmatrix}
0&1\\
1&0
\end{pmatrix},\begin{pmatrix}
1&0&0\\
0&0&1\\
0&1&0
\end{pmatrix} \right),
\]
which is isomorphic to $\mathfrak{S}_3$. Hence $\Aut(\Line^1\times\Line^2; Z)\cong \mathfrak{S}_3$. Therefore $|\Aut(X)|=12$.
\end{proof}

\begin{example}
If $Q_1=\sqrt{-1}x^2+y^2, Q_2=z^2$ and $Q_3=\sqrt{-1}x^2-y^2$, then we have 
\[|\Aut(X)|=2\times |\Aut(\Delta)|=192.\]
\end{example}

\begin{proof}
From $Q_2^2-Q_1Q_3=x^4+y^4+z^4$ and \cite[Theorem 6.5.2 Type II]{CAG}(or \cite[Theorem 16]{B08}), we have 
\[\Aut(\Delta)\cong (\Z/4\Z\times \Z/4\Z)\rtimes\mathfrak{S}_3.\]
The group $\Aut(\Delta)$ is generated by
\[\begin{pmatrix}
\sqrt{-1}&0&0\\
0&1&0\\
0&0&1
\end{pmatrix}, 
\begin{pmatrix}
1&0&0\\
0&\sqrt{-1}&0\\
0&0&1
\end{pmatrix},
\begin{pmatrix}
0&1&0\\
1&0&0\\
0&0&1
\end{pmatrix},\text{and}
\begin{pmatrix}
0&0&1\\
1&0&0\\
0&1&0
\end{pmatrix}.\]
Indeed, we have $\Aut(\Delta)\cong (\Z/4\Z\times \Z/4\Z)\rtimes\mathfrak{S}_3$ (See \cite[Theorem 16]{B08} or \cite[Theorem 6.5.2]{CAG}). The generators give a subgroup of $\Aut(\Line^2)$ of order $96$ and preserve the Fermat quartic $\Delta$.

Consider the automorphisms 
\[\tau_1:([t_0:t_1], [x:y:z])\mapsto \left(\left[-t_1:t_0\right], [\sqrt{-1}x:y:z]\right),\]
\[\tau_2:([t_0:t_1], [x:y:z])\mapsto ([t_1:t_0], [x:\sqrt{-1}y:z]),\]
\[\tau_3:([t_0:t_1], [x:y:z])\mapsto ([\sqrt{-1}t_1:t_0],[y:x:z]),\]
\[\tau_4:([t_0:t_1], [x:y:z])\mapsto ([t_1-t_0:\sqrt{-1}(t_0+t_1)],[z:x:y]),\]
on $\Line^1\times\Line^2$. Then $\tau_i$'s preserve $Z$, and the images of $\tau_i$'s by the map $\Aut(\Line^1\times\Line^2; Z)\to \Aut(\Delta)$ are generators of $\Aut(\Delta)$. By Proposition \ref{iinjective}, we get 
\[\Aut(\Line^1\times\Line^2; Z)\cong \Aut(\Delta)\cong(\Z/4\Z\times \Z/4\Z)\rtimes\mathfrak{S}_3).\]
\end{proof}

The following example is a Fano threefold of \textnumero 2.18 with a smooth quartic for which $|\Aut(X)|\neq 2\times |\Aut(\Delta)|$ holds. 

\begin{example}(\cite{Mue23})
If $Q_1=x^2-xy+y^2+xz, Q_2=x^2-xy+yz$, and $Q_3=x^2-xz+yz+z^2$, then we have 
\[|\Aut(X)|<2\times |\Aut(\Delta)|.\]
\end{example}

\begin{proof}
The Klein quartic
\[\Delta=\{Q_1Q_3-Q_2^2=x^3y+y^3z+z^3x=0\}\]
has the automorphism
\[M=\begin{pmatrix}
1&0&0\\
0&\zeta^3&0\\
0&0&\zeta
\end{pmatrix}\]
of order $7$, where $\zeta\in \C\setminus{1}$ such that $\zeta^7=1$. If the natural injection $\Aut(\Line^1\times\Line^2; Z)\to \Aut(\Delta)$ is surjective, then there exists a matrix 
\[N=\begin{pmatrix}
a_{00}&a_{01}\\
a_{10}&a_{11}
\end{pmatrix}\in \GL_2(k)\]
such that 
\begin{align*}
t_0^2&(x^2-xy+y^2+xz)+2t_0t_1(x^2-xy+yz)+t_1^2(x^2-xz+yz+z^2)\\
=&(a_{00}t_0+a_{01}t_1)^2(x^2-\zeta^3xy+\zeta^6y^2+\zeta xz)\\
&+2(a_{00}t_0+a_{01}t_1)(a_{10}t_0+a_{11}t_1)(x^2-\zeta^2xy+\zeta^4yz)\\
&+(a_{10}t_0+a_{11}t_1)^2(x^2-\zeta xz+\zeta^4yz+\zeta^2z^2).
\end{align*}
Comparing the coefficients of $y^2$ on both sides, we get $\zeta^6a_{00}^2=\zeta^2a_{11}^2=1, a_{01}=a_{10}=0$. Thus, we have 
\begin{align*}
t_0^2&(x^2-xy+y^2+xz)+2t_0t_1(x^2-xy+yz)+t_1^2(x^2-xz+yz+z^2)\\
=&a_{00}^2t_0^2(x^2-\zeta^3xy+\zeta^6y^2+\zeta xz)+2a_{00}a_{11}t_0t_1(x^2-\zeta^2xy+\zeta^4yz)\\
&+a_{11}^2t_1^2(x^2-\zeta xz+\zeta^4yz+\zeta^2z^2).
\end{align*}
Comparing the coefficient of $x^2t_0t_1$, we obtain $a_{00}a_{11}=1$. This contradicts $\zeta^6a_{00}^2=\zeta^2a_{11}^2=1$. 
\end{proof}

\subsection{Family of smooth Fano threefolds of \textnumero 2.18}
In this section, we introduce a variety $M_{(2,2)}^{sm}$ that parametrizes isomorphism classes of smooth Fano threefolds of \textnumero 2.18. We prove that the automorphism group of a Fano variety, corresponding to a general point of $M_{(2,2)}^{sm}$, is isomorphic to $\Z/2\Z$. We consider the GIT-quotient 
\[|\ShO_{\Line^1\times\Line^2}(2,2)|^{ss}/\!\!/\SL_2\times \SL_3,\]
which was studied in \cite{PR20} and \cite{DJK24}. Also, \cite{DJK24} describes the compact K-moduli space of Fano threefolds in the deformation family of \textnumero 2.18. 

We have a natural $\SL_2(k)\times \SL_3(k)$-action on the linear system $|\ShO_{\Line^1\times\Line^2}(2,2)|$. As $\SL_2(k)\times \SL_3(k)$ is reductive, the GIT-quotient 
\[M_{(2,2)}:=|\ShO_{\Line^1\times\Line^2}(2,2)|^{ss}/\!\!/\SL_2\times \SL_3\]
 is defined, and its dimension equals to 6 (see \cite[Section 1]{PR20}). 

\cite[Lemma 2.20]{DJK24} is stated over $\C$, but still holds over $k$. Hence, if a quartic curve $\Delta=\{Q_2^2-Q_1Q_3=0\}$ is semi-stable (resp. stable), then a $(2,2)$-surface 
\[Z:t_0^2Q_1+2t_0t_1Q_2+t_1^2Q_3=0\]
is semi-stable (resp. stable), where $Q_1,Q_2,Q_3$ are quadric forms. In particular, a smooth $(2,2)$-surface is stable. Hence, we see that an open locus $|\ShO_{\Line^1\times\Line^2}(2,2)|^{sm}$ of $|\ShO_{\Line^1\times\Line^2}(2,2)|^{ss}$ parametrizes smooth $(2,2)$-divisors on $\Line^1\times\Line^2$. The closed set $|\ShO_{\Line^1\times\Line^2}(2,2)|^{ss}\setminus |\ShO_{\Line^1\times\Line^2}(2,2)|^{sm}$ is $\SL_2(k)\times \SL_3(k)$-invariant. 
Let $M^{sm}_{(2,2)}$ be the image of $|\ShO_{\Line^1\times\Line^2}(2,2)|^{sm}$ by the quotient map $|\ShO_{\Line^1\times\Line^2}(2,2)|^{ss}\to M_{(2,2)}$. Then, we have a good quotient $|\ShO_{\Line^1\times\Line^2}(2,2)|^{sm}\to M^{sm}_{(2,2)}$.

For $Z\in M^{sm}_{(2,2)}(k)$, let $\Psi(Z)$ be the isomorphism class of a double cover of $\Line^1\times \Line^2$ ramified along $Z$. 
\begin{prop}(\cite[Proposition 9.3]{DJK24} and \cite[Theorem 4.5 (i)]{13})\label{prop 4.19}
The map
\begin{align*}
\Psi:M^{sm}_{(2,2)}(k)\to
\{\text{smooth Fano threefolds of \textnumero 2.18}\}/\cong
\end{align*}
is bijective. 
\end{prop}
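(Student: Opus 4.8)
The plan is to verify well-definedness, surjectivity, and injectivity of $\Psi$ in turn. Well-definedness and surjectivity will be formal consequences of the structure theory of Fano threefolds of \textnumero 2.18 together with the observation that smooth $(2,2)$-surfaces are stable; the substance lies in injectivity, whose key input is Proposition \ref{two dc}.

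\textbf{Well-definedness and surjectivity.} For a divisor $Z_0\in|\ShO_{\Line^1\times\Line^2}(2,2)|$ there is a unique double cover of $\Line^1\times\Line^2$ branched along $Z_0$ (the required square root of $\ShO_{\Line^1\times\Line^2}(2,2)$ being $\ShO_{\Line^1\times\Line^2}(1,1)$, which is unique since $\Pic(\Line^1\times\Line^2)$ is torsion-free), and when $Z_0$ is smooth this cover is a smooth Fano threefold of \textnumero 2.18 by \cite[Lemma 2.51]{KM98} and the Mori--Mukai classification (\cite{MM81}, \cite{IP}). Since a smooth $(2,2)$-surface is stable, the good quotient $|\ShO_{\Line^1\times\Line^2}(2,2)|^{sm}\to M^{sm}_{(2,2)}$ is geometric, so a $k$-point of $M^{sm}_{(2,2)}$ is precisely an $\SL_2(k)\times\SL_3(k)$-orbit of a smooth $(2,2)$-surface; if $Z_0'=g\cdot Z_0$ with $g\in\SL_2(k)\times\SL_3(k)$, the induced $\bar g\in\Aut(\Line^1\times\Line^2)$ takes $Z_0$ to $Z_0'$ and lifts to an isomorphism of the corresponding double covers, so $\Psi$ is a well-defined map with the stated target. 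Surjectivity is then immediate: any smooth Fano threefold $X$ of \textnumero 2.18 admits a double cover $X\to\Line^1\times\Line^2$ branched along a smooth $(2,2)$-surface $Z_0$ (\cite{IP}), so $[X]=\Psi([Z_0])$.

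\textbf{Injectivity.} Suppose $\Psi([Z_0])=\Psi([Z_0'])$ and fix an isomorphism $\phi:\mathcal{X}\xrightarrow{\sim}\mathcal{X}'$ between the associated double covers $\pi:\mathcal{X}\to\Line^1\times\Line^2$ and $\pi':\mathcal{X}'\to\Line^1\times\Line^2$. Proposition \ref{two dc} provides $\psi_1\in\Aut(\Line^1)$ and $\psi_2\in\Aut(\Line^2)$ making the diagrams for $p_1\circ\pi$ and $p_2\circ\pi$ commute; putting $g:=\psi_1\times\psi_2\in\Aut(\Line^1\times\Line^2)$ and comparing the two projections gives $g\circ\pi=\pi'\circ\phi$. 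As $\phi$ and $g$ are isomorphisms, $\pi$ is \'etale at a point $x$ exactly when $\pi'$ is \'etale at $\phi(x)$, so $\phi$ carries the ramification locus of $\pi$ onto that of $\pi'$, and taking images yields $g(Z_0)=Z_0'$. Now $\dim\Line^1\ne\dim\Line^2$ forces $\Aut(\Line^1\times\Line^2)\cong\PGL_2(k)\times\PGL_3(k)$, and over the algebraically closed field $k$ this automorphism lifts to an element of $\SL_2(k)\times\SL_3(k)$ carrying $[Z_0]$ to $[Z_0']$ in $|\ShO_{\Line^1\times\Line^2}(2,2)|$. Hence $Z_0$ and $Z_0'$ lie in a single $\SL_2(k)\times\SL_3(k)$-orbit; since they are stable, their images in $M_{(2,2)}$, and therefore in $M^{sm}_{(2,2)}$, coincide, i.e.\ $[Z_0]=[Z_0']$.

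\textbf{Main obstacle.} The one step that is not purely formal is the reduction at the beginning of the injectivity argument: an a priori abstract isomorphism of the two Fano threefolds must be shown to descend to an automorphism of $\Line^1\times\Line^2$ that identifies the two branch surfaces. This is precisely what Proposition \ref{two dc} is for, combined with the elementary remark that an isomorphism of double covers matches up ramification (hence branch) loci; once $g(Z_0)=Z_0'$ is established, passing to the GIT quotient is routine, since the smooth locus consists of stable points on which the GIT quotient separates orbits.
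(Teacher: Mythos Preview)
Your proof is correct and follows essentially the same approach as the paper: both reduce to injectivity and invoke Proposition \ref{two dc} to descend an abstract isomorphism of the double covers to an automorphism of $\Line^1\times\Line^2$ carrying one branch surface to the other, then lift to $\SL_2(k)\times\SL_3(k)$. You supply more detail (well-definedness, surjectivity, and the ramification-locus argument for $g(Z_0)=Z_0'$) than the paper, which states $\sigma(Z_1)=Z_2$ without further comment, but the substance is the same.
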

\begin{proof}
It is enough to show that $\Psi$ is injective. Let $X_i$, with $i=1,2$, be double covers of $\Line^1\times \Line^2$ ramified along a smooth $(2,2)$-divisor $Z_i$. Assume that we have an isomorphism $f:X_1\to X_2$. By Proposition \ref{two dc}, there exists an automorphism $\sigma$ on $\Line^1\times \Line^2$ such that the following diagram is commutative:
\[
  \begin{CD}
     X_1 @>>> \Line^1\times\Line^2 \\
  @V{f}VV    @V{\sigma}VV \\
      X_2  @>>>  \Line^1\times\Line^2.
  \end{CD}
\]
Here, the horizontal arrows are the double covers. The automorphism $\sigma$ is represented by $M\in \SL_2(k)$ and $N\in \SL_3(k)$, and we obtain the equation $M\!\times\! N(Z_1)=\sigma(Z_1)=Z_2$. Then, we see that $Z_1$ and $Z_2$ are equal in $M^{sm}_{(2,2)}(k)$. 
\end{proof}

By the above proposition, the quasi-projective variety $M^{sm}_{(2,2)}$ parametrizes the isomorphism classes of smooth Fano threefolds of \textnumero 2.18.

Following \cite[Section 2.7]{DJK24}, we introduce the GIT moduli space of plane quartic curves $|\ShO_{\Line^2}(4)|^{ss}/\!\!/ \SL_3$. The reductive group $\SL_3$ naturally regularly acts on $|\ShO_{\Line^2}(4)|$. Thus, we get the GIT-quotient \[M_4:=|\ShO_{\Line^2}(4)|^{ss}/\!\!/ \SL_3.\]
For a $(2,2)$-divisor 
\[Z=\{t_0^2Q_1+2t_0t_1Q_2+t_1^2Q_3=0\}\subset \Line^1_{[t_0:t_1]}\times\Line^2_{[x:y:z]}\]
on $\Line^1\times\Line^2$, we get the plane quartic curve 
\[\Delta=\{Q_2^2-Q_1Q_3=0\}\subset \Line^2_{[x:y:z]}.\] 
Thus we obtain the rational map
\[\Phi: M_{(2,2)}\dashrightarrow M_4; Z\mapsto \Delta\]
(see \cite[Section 1.2]{DJK24} and \cite[Theorem 4.5 (i)]{13}).

The stable locus $|\ShO_{\Line^2}(4)|^{s}$ consists of plane quartic curves with at worst only $A_1$ or $A_2$ singularities. In addition, for any smooth $(2,2)$-divisor $Z$, the plane quartic curve $\Phi(Z)$ has at worst $A_1$-singularities. Therefore the rational map $\Phi$ is defined on $M^{sm}_{(2,2)}$. By \cite[Section 1.2 and 1.3]{DJK24}, we see that $\Phi$ is dominant. Indeed, the following lemma holds:

\begin{lem}\label{morph form (2,2) to 4}(\cite[Theorem 4.5 (i)]{13}, \cite[Section 3]{Bru08} and \cite[Section 1.2 and 1.3]{DJK24})
The morphism \[\Phi:M^{sm}_{(2,2)}\to M_4\] is dominant. 
\end{lem}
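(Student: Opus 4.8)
The plan is to reduce dominance of $\Phi$ to a single differential computation on the level of homogeneous forms. Write $V=H^0(\Line^1\times\Line^2,\ShO(2,2))$, which we regard as the space of $(2,2)$-forms $t_0^2Q_1+2t_0t_1Q_2+t_1^2Q_3$, and $W=H^0(\Line^2,\ShO(4))=k[x,y,z]_4$. There is an $\SL_2\times\SL_3$-equivariant polynomial map
\[\mu\colon V\longrightarrow W,\qquad t_0^2Q_1+2t_0t_1Q_2+t_1^2Q_3\ \longmapsto\ Q_2^2-Q_1Q_3,\]
i.e. the discriminant in the variables $t_0,t_1$; here $\SL_2$ acts trivially on $W$ because the discriminant of a binary quadratic form is $\SL_2$-invariant, and $\Phi$ is precisely the morphism induced by $\mu$ on the relevant open loci of the GIT quotients. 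Since the good quotient morphisms are dominant and compatible with $\mu$, it suffices to prove that $\mu$ is dominant.

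First I would compute $d\mu$ at a point $s_0=t_0^2Q_1+2t_0t_1Q_2+t_1^2Q_3$: it sends $t_0^2\dot Q_1+2t_0t_1\dot Q_2+t_1^2\dot Q_3$ to $2Q_2\dot Q_2-Q_3\dot Q_1-Q_1\dot Q_3$, so as $\dot Q_1,\dot Q_2,\dot Q_3$ range over all quadrics the image is exactly the degree-$4$ graded piece $I_4$ of the ideal $I=(Q_1,Q_2,Q_3)\subset S:=k[x,y,z]$. Now choose $Q_1,Q_2,Q_3$ with no common zero on $\Line^2$ — for instance three general conics, or one of the explicit triples appearing in the examples of Section \ref{section 4.3}. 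Then $Q_1,Q_2,Q_3$ is a regular sequence in $S$, the Koszul resolution gives the Hilbert series of $S/I$ as $(1-t^2)^3/(1-t)^3=(1+t)^3=1+3t+3t^2+t^3$, hence $(S/I)_4=0$ and $I_4=S_4=W$. Thus $d\mu_{s_0}$ is surjective, so $\mu$ is dominant and its image contains a dense open subset $U\subset W$.

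Finally I would reconcile this with smoothness of the threefold. After shrinking $U$ I may assume every $q\in U$ cuts out a smooth plane quartic curve. Given such $q$, pick any $s\in\mu^{-1}(q)$ and apply Proposition \ref{smoothness}: the $(2,2)$-surface $Z_s$ is then smooth, so $[Z_s]\in M^{sm}_{(2,2)}(k)$ and $\Phi([Z_s])$ is the class of the quartic $V(q)$. As $q$ runs over $U$ these classes are dense in $M_4$ (smooth quartics form a dense open of $|\ShO_{\Line^2}(4)|$, contained in the stable locus, and $|\ShO_{\Line^2}(4)|^{ss}\to M_4$ is dominant), so $\Phi$ is dominant; as a consistency check, $M^{sm}_{(2,2)}$ and $M_4$ are both irreducible of dimension $6$, so $\Phi$ is in fact generically finite. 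The step I expect to require the most care is the bookkeeping passage from the vector-space map $\mu$ to the morphism $\Phi$ of GIT quotients — using that $\SL_2$ acts trivially on discriminants, that $\Phi$ is already known to be everywhere defined on $M^{sm}_{(2,2)}$, and that dominance descends along good quotients by a reductive group acting on both source and target; everything else is the short Koszul computation above together with Proposition \ref{smoothness}. Alternatively the lemma follows directly by citing \cite[Theorem 4.5 (i)]{13}, \cite[Section 3]{Bru08}, and \cite[Section 1.2 and 1.3]{DJK24}, where the correspondence between smooth $(2,2)$-surfaces and plane quartic curves is analyzed in detail.
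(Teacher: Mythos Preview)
Your argument is correct but takes a genuinely different route from the paper's. The paper proves \emph{surjectivity onto $M^{sm}_4$} rather than mere dominance: given an arbitrary smooth plane quartic $\Delta$, it chooses an \'etale double cover $\tilde{\Delta}\to\Delta$ and invokes \cite[Theorem~4.5]{13} to write $\Delta=V_+(Q_2^2-Q_1Q_3)$ explicitly, then applies Proposition~\ref{smoothness} exactly as you do. Your approach replaces this external structural input by a local computation: you show that $d\mu$ is surjective at any triple $(Q_1,Q_2,Q_3)$ with empty base locus via the Koszul identity $(S/I)_4=0$, deduce that $\mu$ is dominant, and only then restrict to smooth quartics. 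What the paper's proof buys is an explicit preimage for \emph{every} smooth quartic (and a direct link to the Prym-theoretic description used later in Section~\ref{section 5}); what your proof buys is self-containment --- no appeal to \cite{13} or to the theory of \'etale double covers, just elementary commutative algebra --- at the cost of proving only dominance rather than surjectivity, which is all the lemma asserts anyway. Your identification of the one delicate step (descending from $\mu$ to $\Phi$) is accurate, and your pointwise argument in the last paragraph in fact sidesteps it cleanly.
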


\begin{proof}
Smooth plane quartic curves form an open locus $M^{sm}_4$ of $M_4$. It is enough to show that the map
\[\Phi:\Phi^{-1}(M^{sm}_4)\to M^{sm}_4\]
is surjective. For any smooth plane quartic curve $\Delta\subset\Line^2_{[x:y:z]}$, there exists an \'etale double covering $\bar{\omega}:\tilde{\Delta}\to \Delta$. By \cite[Theorem 4.5]{13}, there exist quadric forms $Q_1,Q_2,Q_3\in k[x,y,z]$ such that $\tilde{\Delta}$ and $\Delta$ are given by
\[\tilde{\Delta}=V_+(Q_1-r^2,Q_2-rs,Q_3-s^2)\subset \Line^4_{[x:y:z:r:s]}\,\,\text{and}\, \Delta=V_+(Q_2^2-Q_1Q_3)\subset \Line^2_{[x:y:z]},\]
respectively. By Proposition \ref{smoothness}, the $(2,2)$-surface
\[Z:t_0^2Q_1+2t_0t_1Q_2+t_1^2Q_3=0\subset \Line^1_{[t_0:t_1]}\times \Line^2_{[x:y:z]}\]
is smooth, and we have $\Phi(Z)=\Delta$.
\end{proof}

\begin{thm}\label{aut of general}
Let $\pi:X\to \Line^1\times\Line^2$ be a double cover ramified along a general $(2,2)$-divisor. Then the automorphism group of $X$ is the cyclic group of order 2, generated by the non-trivial involution acting trivially on $\Line^1\times\Line^2$. 

Therefore, for a general smooth Fano threefold $X\in M^{sm}_{(2,2)}(k)$ of \textnumero 2.18, we get $\Aut(X)\cong \Z/2\Z$.
\end{thm}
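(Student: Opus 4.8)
The plan is to reduce the computation of $\Aut(X)$ to the automorphism group of the discriminant quartic, and then to invoke the classical fact that a general smooth plane quartic has no nontrivial automorphisms. For any double cover $\pi\colon X\to\Line^1\times\Line^2$ branched along a smooth $(2,2)$-surface $Z$, the deck transformation $\tau$ is a nontrivial involution acting trivially on $\Line^1\times\Line^2$, so $\langle\tau\rangle\cong\Z/2\Z\subseteq\Aut(X)$ always. By Proposition~\ref{aut of 2.18} we have $\Aut(X)/\langle\tau\rangle\cong\Aut(\Line^1\times\Line^2;Z)$, so it suffices to show that $\Aut(\Line^1\times\Line^2;Z)$ is trivial for $[Z]$ ranging over a dense open subset of $M^{sm}_{(2,2)}$; equivalently, by Corollary~\ref{upper of aut of 2.18}, that the subgroup $G\subseteq\Aut(\Delta)$ appearing there is trivial for general $Z$.

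Next I would transport genericity along the discriminant map. The variety $M^{sm}_{(2,2)}$ is irreducible of dimension $6$, since $|\ShO_{\Line^1\times\Line^2}(2,2)|$ is a projective space and passing to the semistable locus and to the GIT quotient preserves irreducibility; likewise $M_4$ is irreducible of dimension $6$. By Lemma~\ref{morph form (2,2) to 4} the morphism $\Phi\colon M^{sm}_{(2,2)}\to M_4$ is dominant, so the $\Phi$-preimage of any dense open subset of $M_4$ is nonempty and open, hence dense, in $M^{sm}_{(2,2)}$ (and, incidentally, $\Phi$ is generically finite). Applying this with the dense open $M^{sm}_4\subseteq M_4$ of smooth quartics shows that for general $Z$ the discriminant $\Delta=\Phi([Z])$ is a smooth plane quartic, indeed a \emph{general} smooth plane quartic.

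The key input is that the locus of smooth plane quartics with a nontrivial automorphism is a \emph{proper closed} subset of $M^{sm}_4$. It is closed because the relative automorphism group scheme of a smooth family of curves of genus $g\ge2$ is finite and unramified, so the fibre length is upper semicontinuous and the locus where it is $\ge2$ is closed (and this locus, being $\SL_3$-invariant, descends to $M_4$). It is proper because, by the classification of automorphism groups of smooth plane quartics (see \cite[Section 6.5]{CAG} and the references mentioned after Theorem~\ref{cyclic aut of quartic}), every nontrivial case forms a stratum of positive codimension in the $6$-dimensional $M_4$. Hence $\{[\Delta]\in M^{sm}_4:\Aut(\Delta)=1\}$ is dense open in $M_4$, and by the previous paragraph its $\Phi$-preimage $U\subseteq M^{sm}_{(2,2)}$ is dense open. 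For $[Z]\in U$, the injection $\Aut(\Line^1\times\Line^2;Z)\hookrightarrow\Aut(\Delta)=1$ of Proposition~\ref{iinjective} forces $\Aut(\Line^1\times\Line^2;Z)=1$, so $\Aut(X)=\langle\tau\rangle\cong\Z/2\Z$. Finally, since by Proposition~\ref{prop 4.19} the map $\Psi$ is a bijection between $M^{sm}_{(2,2)}(k)$ and isomorphism classes of smooth Fano threefolds of \textnumero~2.18, the image $\Psi(U(k))$ is the required dense family, which yields the last assertion.

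The main obstacle is the geometric input about plane quartics: proving that a general smooth plane quartic has trivial automorphism group and that the locus of those with a nontrivial automorphism is closed. The first can be read off from the explicit classification cited above (or, via Torelli, from the fact that a general non-hyperelliptic genus-$3$ curve has no automorphisms); the second requires the finiteness and unramifiedness of the relative automorphism scheme of a smooth family of curves of genus $\ge2$. A secondary, purely technical point is making the passage from ``general $Z$'' to ``general $\Delta$'' rigorous, which is handled by the dominance (and irreducibility of source and target) of $\Phi$ noted above.
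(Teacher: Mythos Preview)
Your proof is correct and follows essentially the same approach as the paper: reduce via Corollary~\ref{upper of aut of 2.18} (equivalently, Proposition~\ref{aut of 2.18} and Proposition~\ref{iinjective}) to showing $\Aut(\Delta)=1$ for general $Z$, and then use the dominance of $\Phi$ from Lemma~\ref{morph form (2,2) to 4} together with the classical fact that a general smooth plane quartic has trivial automorphism group. The paper's proof is a single sentence citing exactly these two ingredients; you have simply fleshed out the details (irreducibility of the moduli spaces, closedness of the nontrivial-automorphism locus, and the passage along $\Psi$).
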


\begin{proof}
Since a general plane quartic curve is a smooth plane quartic whose automorphism group is trivial, this theorem follows from Lemma \ref{morph form (2,2) to 4} and Corollary \ref{upper of aut of 2.18}.
\end{proof}

\section{Linearizability}\label{section 5}
In this section, we work over an algebraically closed field of characteristic zero. We introduce an application of the equivariant IJT-obstruction. In addition, we apply the calculation of automorphism groups given in Section 4 to linearizability. The main objects are smooth Fano threefolds of \textnumero 2.18. 

First, we discuss equivariant polarized Prym schemes and equivariant Prym varieties. To construct examples of abelian varieties that are not a product of Jacobian varieties, Mumford introduced Prym varieties. To apply the rationality problem for geometrically rational conic bundles, \cite{13} constructs polarized Prym schemes. \cite{1} gives equivariant versions of these, called equivariant polarized Prym schemes and equivariant Prym varieties, and these equivariant versions apply to the linearizability problem for conic bundles.

Theorem \ref{CTT24 Theorem 4.4}, Proposition \ref{CTT24 prop 5.1}, Theorem \ref{CTT24 thm 5.2}, Theorem \ref{CTT24 Theorem 5.3}, Proposition \ref{CTT24 prop 5.6}, Theorem \ref{CTT24 Theorem 5.4}, Theorem \ref{CTT24 Example 5.14} and Proposition \ref{non-linearizable prop} can be proven in the same way as \cite{1}.

\subsection{Equivariant polarized Prym schemes and Prym varieties}\label{section 5.1}
Let $G$ be a finite group and  
\[\bar{\omega}:\tilde{\Delta}\to\Delta\]
be a $G$-equivariant \'etale double cover between smooth projective irreducible $G$-curves. Then the norm map
\[\Nm(\bar{\omega}):\PPic_{\tilde{\Delta}/k}\to \PPic_{\Delta/k}\]
is a $G$-equivariant homomorphism of $G$-group schemes. 

Let $r:\Delta\inj W$ be a $G$-equivariant embedding into a smooth rational projective surface $W$ with a regular $G$-action. Then $r$ induces a $G$-equivariant homomorphism of group schemes
\[r^*:\PPic_{W/k}\to \PPic_{\Delta/k}.\]
The $G$-equivariant $\PPic_{W/k}$-polarized Prym scheme of the $G$-equivariant \'etale double cover $\bar{\omega}$ is defined as
\[\PPrym_{\tilde{\Delta}/\Delta}^{\PPic_{W/k}}:=\PPic_{\tilde{\Delta}/k}\times_{\PPic_{\Delta/k}}\PPic_{W/k}, \]
which is a group scheme with diagonal $G$-action. 
In addition, the $G$-equivariant Prym variety $\Prym_{\tilde{\Delta}/\Delta}$ is defined by
\[\Prym_{\tilde{\Delta}/\Delta}:=\left(\PPrym_{\tilde{\Delta}/\Delta}^{\PPic_{W/k}}\right)^0=(\ker(\Nm(\bar{\omega})))^0,\]
which is a $G$-abelian variety. The restriction of a $G$-invariant principal polarization associated with a theta divisor on $\PPic_{\tilde{\Delta}/k}$ to $\Prym_{\tilde{\Delta}/\Delta}$ is twice a $G$-equivariant principal polarization on $\Prym_{\tilde{\Delta}/\Delta}$. Therefore, we see that $\Prym_{\tilde{\Delta}/\Delta}$ is a $G$-equivariant principally polarized abelian variety.

For each $D\in \Pic(W)$, the fiber of the second projection 
\[\PPrym_{\tilde{\Delta}/\Delta}^{\PPic_{W/k}}\to \PPic_{W/k}\]
over $D$ is denoted by  $V_D$. 

For any $D\in \Pic(W)$, the fiber $V_D$ over $D$ is only dependent on the class $r^*D$. In particular, if $r^*$ is injective, for example when $W\Gcong \Line^2$, we have the $G$-equivariant isomorphism 
\[\PPrym_{\tilde{\Delta}/\Delta}^{\PPic_{W/k}}(k)\Gcong\left\{D\in \Pic(\tilde{\Delta})\middle|\bar{\omega}_*(D)\in r^*(\Pic(W))\right\}\]
via the first projection.

If $D$ is $G$-invariant, $V_D$ is a $G$-equivariant torsor of $V_0$. 
Because we have $\bar{\omega}_*\bar{\omega}^*D=2D$, we get 
\[V_{2D+D'}=\bar{\omega}^*r^*D+V_{D'}\]
as subschemes of $\PPrym_{\tilde{\Delta}/\Delta}^{\PPic_{W/k}}$ for any $D,D'\in \Pic(W)$.

The group scheme $V_0$ consists of two connected components,
\[P^{(0)}=P:=\Prym_{\tilde{\Delta}/\Delta},\quad \tilde{P}^{(0)}=\tilde{P}:=V_0\setminus P.\]
Both carry the $G$-action, and $\tilde{P}$ is a $G$-equivariant torsor of $P$.

\begin{prop}(\cite[Section 6, Equation (6.1)]{Mum74})
Let $\Delta\subset \Line^2$ be a smooth plane quartic and $H$ be the hyperplane class of $\Line^2$. Then, the parity of $h^0$ is constant on each of the two connected components of $V_H$, and is different on these components. Here $h^0(X)=H^0(X,\ShO_X)$. 
\end{prop}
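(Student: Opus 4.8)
The plan is to identify the paper's $V_H$ with the set of line bundles of fixed norm studied by Mumford, invoke Mumford's constancy-of-parity result for that set, and then compute the two parities explicitly in the plane-quartic situation.

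\textbf{Set-up.} Since $\Delta\subset\Line^2$ is a smooth plane quartic we have $r^*H=\ShO_\Delta(1)=K_\Delta$, so $V_H$ is the fibre over the canonical class of $\Delta$; as $W=\Line^2$ has $r^*$ injective, the isomorphism recorded just above the statement identifies $V_H$ (via the first projection) with $\{L\in\Pic(\tilde\Delta):\Nm(\bar\omega)(L)=K_\Delta\}$. Because $\bar\omega$ is \'etale, $K_{\tilde\Delta}=\bar\omega^*K_\Delta$, the curve $\tilde\Delta$ has genus $\tilde g=2g_\Delta-1=5$, and every $L\in V_H$ has degree $\deg K_\Delta=4=\tilde g-1$; thus $V_H\subset\Pic^{\tilde g-1}(\tilde\Delta)$. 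Fix a theta characteristic $\theta$ of $\Delta$ and put $\kappa:=\bar\omega^*\theta$, an $\iota$-invariant theta characteristic of $\tilde\Delta$ (with $\iota$ the deck involution); since $\Nm(\bar\omega)(\kappa)=\theta^{\otimes2}=K_\Delta$, we get $V_H=\kappa\otimes V_0$, so the two connected components of $V_H$ are $\kappa\otimes P$ and $\kappa\otimes\tilde P$.

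\textbf{Constancy of the parity.} That $L\mapsto h^0(\tilde\Delta,L)\bmod 2$ is locally constant on $\{\Nm(\bar\omega)(L)=K_\Delta\}$, hence constant on each of the irreducible sets $\kappa\otimes P$ and $\kappa\otimes\tilde P$, is Mumford's theorem \cite[Section 6, Equation (6.1)]{Mum74}, which I would invoke directly. (The mechanism: relative Serre duality, twisted by the \emph{fixed-point-free} involution $\iota$ through the relation $\iota^*L\cong K_{\tilde\Delta}\otimes L^{-1}$ valid on $V_H$, endows the family of two-term complexes computing cohomology along a family in $V_H$ with an \emph{alternating} self-duality; so the rank of the connecting homomorphism changes only by even amounts, and $h^0$ changes by even amounts. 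Equivalently, via the Riemann singularity theorem, the theta divisor of $\tilde\Delta$ restricts to twice an effective divisor on one component of $V_H$ and contains the other.)

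\textbf{The two parities are distinct.} It remains to evaluate the constant on each component. On $\kappa\otimes P$, take $L=\kappa$: by the projection formula $\bar\omega_*\kappa=\theta\oplus(\theta\otimes\eta)$, where $\eta\in\Pic^0(\Delta)[2]\setminus\{0\}$ is the class defining $\bar\omega$, and both summands are theta characteristics of $\Delta$; hence $h^0(\kappa)=h^0(\theta)+h^0(\theta\otimes\eta)$, whose parity is the value $q_\theta(\eta)$ of the Riemann--Mumford quadratic form. On $\kappa\otimes\tilde P$, I would take a $2$-torsion point $\nu=\bar\omega^*\beta$, $\beta\in\Pic^0(\Delta)[2]$, with $\nu\in\tilde P$ and $\langle\beta,\eta\rangle=1$ in the Weil pairing; such $\beta$ exists because the homomorphism $\Pic^0(\Delta)[2]\to\pi_0(\ker\Nm(\bar\omega))\cong\Z/2$, $\beta\mapsto[\bar\omega^*\beta]$, is surjective (a count of $2$-torsion: $|\bar\omega^*\Pic^0(\Delta)[2]|=2^{2g_\Delta-1}$ exceeds $|P[2]|=2^{2\dim P}=2^{2g_\Delta-2}$), and one then adjusts within its nonzero fibre to also pair nontrivially with $\eta$. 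Then $\kappa\otimes\nu=\bar\omega^*(\theta\otimes\beta)$ with $\theta\otimes\beta$ again a theta characteristic of $\Delta$, so $h^0(\kappa\otimes\nu)=h^0(\theta\otimes\beta)+h^0(\theta\otimes\beta\otimes\eta)$ has parity $q_{\theta\otimes\beta}(\eta)=q_\theta(\eta)+\langle\beta,\eta\rangle=q_\theta(\eta)+1$. Together with the constancy step this shows $h^0$ is even on one component of $V_H$ and odd on the other, proving the proposition.

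\textbf{Main obstacle.} The real content is the constancy step — that $h^0\bmod 2$ is a deformation invariant along the positive-dimensional components of $V_H$, equivalently the alternating (not merely symmetric) nature of the self-duality — which genuinely uses that $\bar\omega$ is unramified and is Mumford's lemma; I would cite it rather than reprove it. The identification of $V_H$ with $\{\Nm(\bar\omega)=K_\Delta\}$ and the two $h^0$-computations are then routine.
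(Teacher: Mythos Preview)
The paper gives no proof here; the proposition is simply recorded with the citation to Mumford \cite[Section~6]{Mum74}, where both the constancy of $h^0\bmod 2$ on each component of $\{L:\Nm L=K_\Delta\}$ and the distinctness on the two components are established. Your argument is correct and follows the same route---you identify $V_H$ with Mumford's locus via $r^*H=K_\Delta$ and invoke his lemma for the constancy step---but you go further than the paper by supplying an explicit parity computation through theta characteristics and the Riemann--Mumford quadratic form.

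One small point on that computation: your ``adjustment'' step (finding $\beta$ with both $\bar\omega^*\beta\in\tilde P$ and $\langle\beta,\eta\rangle=1$) is not fully justified as written, since the counting argument alone does not exclude the possibility that the non-identity fibre of $\beta\mapsto[\bar\omega^*\beta]$ coincides with $\eta^\perp$. In fact the two conditions are \emph{equivalent}: for $\beta\in\PPic^0_{\Delta/k}[2]$ one has $\bar\omega^*\beta\in P$ if and only if $\langle\beta,\eta\rangle=0$ (this is part of Mumford's description of $\ker\Nm$). So the adjustment is unnecessary and the gap is harmless; you could simply pick any $\beta$ with $\langle\beta,\eta\rangle=1$, which exists by nondegeneracy of the Weil pairing.
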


\begin{dfn}(\cite[Definition 4.4]{13} and \cite[Definition 4.2]{1})
Let $\Delta\subset \Line^2$ be a smooth plane quartic, $H$ be the hyperplane class of $\Line^2$, and $\tilde{\Delta}$ be a \'etale double cover of $\Delta$. Assume that a finite group $G$ regularly acts on $\Delta$ and $\tilde{\Delta}$, and the double cover $\tilde{\Delta}\to \Delta$ is $G$-equivariant. Note that a regular $G$-action on $\Delta$ induces a regular $G$-action $\Line^2$ such that the immersion $\Delta\inj \Line^2$ is $G$-equivariant. 
\begin{itemize}
\item[(1)]
The connected components of $V_H$ on which $h^0$ is even and odd are denoted by $P^{(1)}$ and $\tilde{P}^{(1)}$, respectively.
\item[(2)]
For each $m\in \Z_{\ge 0}$ and $e=0,1$, we define 
\[P^{(2m+e)}:=P^{(e)}+mH, \,\tilde{P}^{(2m+e)}:=\tilde{P}^{(e)}+mH.\]
Then these are $G$-equivariant tosors of $P=\Prym_{\tilde{\Delta}/\Delta}$.
\item[(3)]
For each $m\in \Z_{> 0}$, we define the subschemes $S^{(m)}$ and $\tilde{S}^{(m)}$ of $\Sym^m\tilde{\Delta}$ to be the preimages of $P^{(m)}$ and $\tilde{P}^{(m)}$, respectively, under the Abel-Jacobi map $\Sym^m\tilde{\Delta}\to \PPic^m_{\tilde{\Delta}/k}$.
\end{itemize}
\end{dfn}

\subsection{Intermediate Jacobians and Prym varieties}\label{section 5.2}
We give a relation between Chow group schemes and polarized Prym schemes. It induces an equivalence of intermediate Jacobians and Prym varieties.

Let $X$ and $W$ be a smooth projective rational threefold and a surface with a regular action of a finite group $G$, respectively. Assume the $G$-equivariant morphism $\pi:X\to W$ is a standard conic bundle on $W$ over $k$. Since the $G$-action on $W$ preserves $\Delta$, the finite group $G$ regularly acts on $\Delta$. Assume that the discriminant curve $\Delta\subset W$ is smooth and the $G$-equivariant \'etale cover $\bar{\omega}:\tilde{\Delta}\to\Delta$ is irreducible, i.e., $\tilde{\Delta}$ is irreducible, hence so is $\Delta$.

By the construction, the morphisms given by \cite[Theorem 5.1]{13} are $G$-equivariant. 
\begin{thm}(\cite[Theorem 4.4]{1} and \cite[Theorem 5.1 and 5.8]{13})\label{CTT24 Theorem 4.4}
There exists a $G$-equivariant surjective morphism 
\[\CCH^2_{X/k}\to \PPrym^{\PPic_{W/k}}_{\tilde{\Delta}/\Delta}\]
of group schemes and its restriction to the identity component is a $G$-equivariant isomorphism 
\[(\CCH^2_{X/k})^0\Gcong \Prym_{\tilde{\Delta}/\Delta}\]
as $G$-equivariant principally polarized abelian varieties.
\end{thm}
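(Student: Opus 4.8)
The plan is to deduce both assertions from the non-equivariant results of \cite{13} and then verify that the morphisms produced there respect the $G$-actions. First I would recall from \cite[Theorem 5.1]{13} that, attached to a standard conic bundle $\pi\colon X\to W$ over a smooth rational surface with smooth discriminant $\Delta$ and associated \'etale double cover $\bar{\omega}\colon\tilde{\Delta}\to\Delta$, there is a natural surjective morphism of group schemes $\CCH^2_{X/k}\to\PPrym^{\PPic_{W/k}}_{\tilde{\Delta}/\Delta}$, and from \cite[Theorem 5.8]{13} that its restriction to identity components is an isomorphism $(\CCH^2_{X/k})^0\xrightarrow{\sim}\Prym_{\tilde{\Delta}/\Delta}$ of principally polarized abelian varieties. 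The content of the theorem is to upgrade these two facts to the $G$-equivariant setting.

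For the equivariance, the key point is that the Benoist--Wittenberg morphism is assembled from operations that are functorial for morphisms of varieties: pullback and pushforward of cycle classes along $\pi$ and along $\tilde{\Delta}\to\Delta\hookrightarrow W$, the norm map $\Nm(\bar{\omega})$, and the fibre-product presentation $\PPrym^{\PPic_{W/k}}_{\tilde{\Delta}/\Delta}=\PPic_{\tilde{\Delta}/k}\times_{\PPic_{\Delta/k}}\PPic_{W/k}$. Since $\pi$ is $G$-equivariant and $\Delta$, $\bar{\omega}$ are $G$-equivariant (Section \ref{section 5.1}), each of these operations is $G$-equivariant: on $\CCH^2_{X/k}$ we use the $G$-action constructed in Lemma \ref{action on CH}, on the relative Picard schemes the natural one, and $\Nm(\bar{\omega})$ is $G$-equivariant by Section \ref{section 5.1}. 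I would then conclude that $\CCH^2_{X/k}\to\PPrym^{\PPic_{W/k}}_{\tilde{\Delta}/\Delta}$ is a $G$-equivariant surjection of group schemes --- either by tracking the construction step by step, or, more economically, by noting that the morphism is determined by its effect on $k$-points together with the group-scheme structure and that on $k$-points it is visibly $G$-equivariant.

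Restricting to identity components gives a $G$-equivariant isomorphism $(\CCH^2_{X/k})^0\Gcong\Prym_{\tilde{\Delta}/\Delta}$ of $G$-abelian varieties, and it remains to see that it respects the principal polarizations equivariantly. By \cite[Theorem 5.8]{13} the underlying isomorphism already carries the principal polarization $\theta_X$ on $\Ab^2(X)=(\CCH^2_{X/k})^0$ to the principal polarization on $\Prym_{\tilde{\Delta}/\Delta}$ induced by (half of) the theta divisor of $\tilde{\Delta}$; and both of these polarizations are $G$-invariant --- the former by the discussion in Section \ref{section 3.2}, the latter by the discussion in Section \ref{section 5.1}. Since a $G$-equivariant isomorphism of $G$-abelian varieties matching two $G$-invariant principal polarizations is by definition an isomorphism of $G$-equivariant principally polarized abelian varieties, this completes the proof.

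The main obstacle I anticipate is bookkeeping rather than mathematics: one must check that the auxiliary data entering Benoist--Wittenberg's construction (the cycle or correspondence defining the map, a base point on $\tilde{\Delta}$, the labelling of the components of $\PPrym^{\PPic_{W/k}}_{\tilde{\Delta}/\Delta}$) can be chosen $G$-equivariantly, or else that the resulting morphism is independent of them. This is the same issue already faced in Lemma \ref{before blowup}, where one had to exhibit a $G$-invariant correspondence $z$, and it is handled the same way: the morphism in question is canonical, so once the functoriality of $\CCH^2_{X/k}$ and of the relative Picard schemes is in place, $G$-equivariance is forced.
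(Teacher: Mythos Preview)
Your proposal is correct and follows essentially the same approach as the paper: both reduce to the non-equivariant results of \cite{13} and then verify $G$-equivariance by observing that the constituent maps are built from functorial operations. The paper is slightly more explicit in naming the morphism as the pair $(\varphi,\pi_*)$ with $\varphi\colon\CCH^2_{X/k}\to\PPic_{\tilde{\Delta}/k}$ taken from the proof of \cite[Theorem~5.1~(iii)]{13} and $\pi_*\colon\CCH^2_{X/k}\to\PPic_{W/k}$, then factoring through the fibre product, whereas you phrase the same content at the level of general functoriality; the substance is identical.
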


\begin{proof}
We give an outline of the proof. We have a homomorphism 
\[\varphi:\CCH^2_{X/k}\to \PPic_{\tilde{\Delta}/k}.\]
The morphism $\varphi$ is given in the proof of \cite[Theorem 5.1 (iii)]{13}. By construction of $\varphi$, the morphism $\varphi$ is $G$-equivariant. Since $\pi$ is $G$-equivariant, the homomorphism 
\[\pi_*: \CCH^2_{X/k}\to  \PPic_{W/k}\]
so is. Hence the homomorphism
\begin{equation}\label{aaaaa}
(\varphi,\pi_*):\CCH^2_{X/k}\to \PPic_{\tilde{\Delta}/k}\times \PPic_{W/k}
\end{equation}
is $G$-equivariant for the diagonal $G$-action on $\PPic_{\tilde{\Delta}/k}\times \PPic_{W/k}$. The morphism \eqref{aaaaa} is the morphism given in the proof of \cite[Theorem 5.1]{13}. Then we get a $G$-equivariant morphism 
\[\CCH^2_{X/k}\to \PPrym^{\PPic_{W/k}}_{\tilde{\Delta}/\Delta}\]
of group schemes. The $G$-equivariant morphism is the morphism given in \cite[Theorem 5.7]{13}. By \cite[Theorem 5.1 and 5.8]{13}, the assertions hold.
\end{proof}

\subsection{Linearizability of Fano threefolds of \textnumero 2.18}\label{section 5.3}
Following \cite{1}, we introduce tools for the linearizability of Fano threefolds of \textnumero 2.18. 
In addition, we give new examples of linearizable and non-linearizable rational threefolds.

In this section, let
\[\pi:X\to\Line^1\times\Line^2\] 
be the double cover branched in a smooth $(2,2)$-divisor over algebraically closed filed $k$ of characteristic $0$. Let
\[\pi_1:X\to\Line^1 \,\text{and}\, \pi_2:X\to \Line^2\]
be the compositions of $\pi$ and the first projection, and $\pi$ and the second projection, respectively. Assume that the discriminant curve $\Delta$ is smooth. Then we have an \'etale cover $\bar{\omega}:\tilde{\Delta}\to\Delta$ induced by the standard conic bundle $\pi_2$. Suppose that a finite group $G$ regularly acts on $X$. Then the $G$-action on $X$ induces regular $G$-actions on $\Delta, \Line^2, \Line^1$, and $\tilde{\Delta}$. In addition, the morphisms $\pi_1,\pi_2, \bar{\omega}$, and the immersion $\Delta\inj \Line^2$ are $G$-equivariant. Moreover, the $G$-action on $X$ induces a regular $G$-action on the Fano variety $\calF_1(X/\Line^1)$ of lines in the fibers of $\pi_1$. We have a $G$-equivariant Stein factorization
\[\calF_1(X/\Line^1)\to C\to \Line^1. \] 
Then $C$ is a smooth projective $G$-curve of genus $2$. (See \cite[Theorem 4.5 (ii) and Theorem 6.3 (iii)]{13}.)

\subsubsection*{Examples of linearizable actions}
\begin{prop}(\cite[Proposition 5.1]{1})\label{CTT24 prop 5.1}
Assume that $G$ is cyclic and $\tilde{\Delta}^G\neq \emptyset$.
Then the $G$-action on $X$ is linearizable. 
\end{prop}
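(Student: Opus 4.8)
The plan is to convert the hypothesis $\tilde\Delta^G\neq\emptyset$ into a $G$-invariant section of the quadric surface bundle $\pi_1\colon X\to\Line^1$, to project from that section to reach a $G$-equivariant $\Line^2$-bundle over $\Line^1$, and finally to linearize this bundle, using that $G$ is cyclic.

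First I would translate the hypothesis geometrically. A $G$-fixed point $\tilde p\in\tilde\Delta$ lies over a point $p\in\Delta$, which is then a $G$-fixed point of $\Delta\subset\Line^2$ for the $G$-action on $\Line^2$ induced by the $\Aut(X)$-equivariant morphism $\pi_2$ (Proposition \ref{two dc}). Since $\Delta$ is smooth, $p\notin\{Q_1=Q_2=Q_3=0\}$, so the binary quadratic form $t_0^2Q_1(p)+2t_0t_1Q_2(p)+t_1^2Q_3(p)$ is a nonzero perfect square; hence the reducible conic $\pi_2^{-1}(p)=\pi^{-1}(\Line^1\times\{p\})$ is a nodal curve whose two components each map isomorphically onto $\Line^1\times\{p\}$. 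The point $\tilde p$ singles out one of these components, say $\ell$, and fixing $\tilde p$ means precisely that $\ell$ is $G$-invariant. Composing $\ell\xrightarrow{\ \sim\ }\Line^1\times\{p\}$ with the projection to $\Line^1$ exhibits $\ell$ as a section of $\pi_1\colon X\to\Line^1$, which is again $\Aut(X)$-equivariant by Proposition \ref{two dc}; so $\ell$ is a $G$-invariant section of this quadric surface bundle.

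Next I would project from the section. The generic fibre of $\pi_1$ is a smooth quadric surface and $\ell$ gives a rational point on it; projection from that point is birational onto $\Line^2$. Carrying this out fibrewise over $\Line^1$, with centre the $G$-invariant section $\ell$, produces a $G$-equivariant birational map $X\dashrightarrow Y$ over $\Line^1$, with $q\colon Y\to\Line^1$ a $\Line^2$-bundle carrying a $G$-action compatible with the $G$-action on $\Line^1$. I would then invoke the standard fact that a $\Line^2$-bundle over $\Line^1$ with a compatible faithful action of a cyclic group $G$ is $G$-linearizable: depending on whether $G$ acts faithfully on the base, one combines Tsen's theorem for the rational function field $k(\Line^1)^G$ (which kills the Brauer-type obstruction to writing $Y=\Line(\mathcal E)$ for a $G$-linearized bundle $\mathcal E$) with the no-name lemma, reducing to $\Line^1\times\Line^2$ with a product of linear $G$-actions, and then — after replacing one factor by affine space — to $\Line(V)=\Line^3$ for a four-dimensional representation $V$; in the case where $G$ acts trivially on the base one instead uses that the generic fibre of $\pi_1$ is a $G$-invariant quadric in $\Line(H^0(\mathcal O(1)))$, so projecting from the $G$-fixed point $\ell$ is already linear. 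Chaining the resulting birational maps gives $X\underset{G}{\overset{\sim}{\dashrightarrow}}\Line(V)$, so the $G$-action on $X$ is linearizable.

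The part I expect to be the genuine obstacle is this last step — showing the $\Line^2$-bundle $Y\to\Line^1$ can be $G$-birationally trivialized and that the resulting product action on $\Line^1\times\Line^2$ is linearizable — since this is exactly where the cyclicity of $G$ enters, both to control the Brauer-type obstruction (via Tsen) and to guarantee that the residual fibrewise actions of subgroups of $G$ are linear. In writing it up one should quote the pertinent no-name-lemma statements, as in \cite{1}, rather than redevelop them; by contrast the first two steps are a direct computation with the explicit equations of $X$ and the definition of the étale double cover $\tilde\Delta\to\Delta$.
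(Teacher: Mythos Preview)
Your proposal is a valid alternative, diverging from the paper after the common first step of extracting from the fixed point on $\tilde\Delta$ a $G$-invariant component $\ell$ of the degenerate conic $\pi_2^{-1}(p)$. You then view $\ell$ as a section of the quadric surface bundle $\pi_1$ and project fibrewise to a $G$-equivariant $\Line^2$-bundle over $\Line^1$, leaving its linearization as the residual step. The paper instead blows up $X$ along $\ell$ and maps the blow-up birationally, via the base-point-free system $|p^*H_1+p^*H_2-E|$, onto a quadric threefold $Q\subset\Line^4$; the point is that $Q$ contains a $G$-stable plane, on which the cyclic $G$ has a fixed point by a single eigenvector argument, and projection of $Q$ from that point gives the $G$-birational map to $\Line^3$. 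The paper's Sarkisov link thus spends cyclicity in one stroke and bypasses entirely the bundle-trivialization you flag as the obstacle; your route would also go through---Galois descent plus Tsen handles the case where $G$ is faithful on the base, and diagonalizing a $G$-linearized $\mathcal E$ handles the case where $G$ is trivial on the base---but when $G$ acts on $\Line^1$ through a proper quotient you have to interleave both mechanisms, which is more laborious than the paper's direct link to $Q$.
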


\begin{proof}
We give an outline of the proof.
For $P\in \tilde{\Delta}^G(k)$, the point $\bar{\omega}(P)$ is a fixed point of $\Delta$. Let $\ell, \ell'$ be the lines in the fiber $\ell\cup \ell'$ of $\pi_2$ over $\bar{\omega}(P)$. Then $\ell$ and $\ell'$ are $G$-stable. Let $p:\hat{X}\to X$ be the $G$-equivariant blow-up along $\ell$, with the exceptional divisor $E$. 

Let $H_1$ and $H_2$ be the pull-backs of hyperplane classes by $\pi_1$ and $\pi_2$, respectively. The line bundle $L:=p^*H_1+p^*H_2-E$ is $G$-stable. As the linear system $|L|$ is $4$-dimensional and base point free, the associated morphism 
\[\Phi_{|L|}:\hat{X}\to \Line^4\]
induces a $G$-equivariant birational morphism to a quadric threefold $Q\subset \Line^4$. 

Then $Q$ contains a $G$-stable plane. Since $G$ is cyclic, by taking an eigenvalue, we see that $Q$ has a fixed point. Projection from the fixed point induces a $G$-equivariant birational map from $Q$ to $\Line^3$. Therefore, we get a $G$-equivariant birational map from $X$ to $\Line^3$. 
\end{proof}

\begin{cor}
Assume that $G$ is a cyclic group of odd order. Then the $G$-action on $X$ is linearizable. 
\end{cor}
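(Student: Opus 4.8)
The plan is to deduce the corollary from Proposition \ref{CTT24 prop 5.1}, for which it suffices to verify that $\tilde{\Delta}^{G}\neq\emptyset$. We may assume $G$ is nontrivial and, replacing $G$ by its image in $\Aut(\tilde{\Delta})$, that it acts faithfully on $\tilde{\Delta}$; since $G$ has odd order and the deck group of $\bar{\omega}\colon\tilde{\Delta}\to\Delta$ is $\Z/2\Z$, the group $G$ then also acts faithfully on $\Delta$. The key reduction is that it suffices to produce a single point of $\Delta^{G}$: for $P\in\Delta^{G}$ the fibre $\bar{\omega}^{-1}(P)$ is a $G$-set of cardinality $2$, and a group of odd order acts trivially on a two-element set, so $\bar{\omega}^{-1}(P)\subset\tilde{\Delta}^{G}$. (As throughout Section \ref{section 5.3}, I use here that $\pi_{2}\colon X\to\Line^{2}$, and hence $\bar{\omega}$, are $G$-equivariant.) So everything comes down to: a nontrivial cyclic group of odd order acting faithfully on a smooth plane quartic $\Delta$ has a fixed point on $\Delta$.

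To prove this I would apply Theorem \ref{cyclic aut of quartic} to a generator of $G$: after a coordinate change on $\Line^{2}$ the generator acts on $\Delta$ as $M=\diag(1,\zeta_{n}^{a},\zeta_{n}^{b})$ with $n=|G|$ odd. The fixed locus of $M$ in $\Line^{2}$ is a union of projectivised eigenspaces of $M$, so, $M$ being nontrivial, it is either a line $L$ with a disjoint point or the three coordinate points $e_{0},e_{1},e_{2}$. If a fixed line $L$ occurs, then $L$ meets the irreducible quartic $\Delta$ (a line and a plane quartic always meet), so $L\cap\Delta$ is finite and nonempty and each of its points is $M$-, hence $G$-, fixed. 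If instead the fixed locus is $\{e_{0},e_{1},e_{2}\}$, suppose for contradiction that $\Delta$ contains none of them; writing $F$ for the defining quartic form, this says that $x^{4},y^{4},z^{4}$ all appear in $F$. But $F$ is a common eigenvector for the $M$-action on quartic forms, while $x^{4},y^{4},z^{4}$ have $M$-eigenvalues $1,\zeta_{n}^{\pm 4a},\zeta_{n}^{\pm 4b}$; hence $4a\equiv4b\equiv0\pmod{n}$, and since $\gcd(4,n)=1$ this forces $a\equiv b\equiv0\pmod{n}$, i.e.\ $M=\id$, a contradiction. Thus $\Delta^{G}\neq\emptyset$ in every case.

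Combining the two steps gives $\tilde{\Delta}^{G}\neq\emptyset$, and Proposition \ref{CTT24 prop 5.1} then shows that the $G$-action on $X$ is linearizable. I do not expect a genuine obstacle here; the only delicate point is the eigenvalue bookkeeping in the monomial computation---whether $M$ acts on homogeneous coordinates or on coordinate functions---which affects only the sign of the exponents $4a,4b$ and so not the divisibility conclusion. As an alternative one could run the fixed-point count via the topological Lefschetz fixed point formula for a generator of $G$ on $\Delta$, using $H^{0}(\Delta,\omega_{\Delta})\cong H^{0}(\Line^{2},\ShO(1))$, but the monomial argument already stays within the tools set up in the paper.
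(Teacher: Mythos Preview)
Your argument is correct and follows the same overall architecture as the paper's proof: both reduce to Proposition \ref{CTT24 prop 5.1} by first producing a $G$-fixed point on $\Delta$ and then lifting it to $\tilde{\Delta}$ via the observation that an odd-order group cannot act nontrivially on the two-point fibres of the \'etale double cover $\bar{\omega}$.

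The genuine difference lies in how the existence of a fixed point on $\Delta$ is established. The paper invokes the explicit classification of odd-order cyclic automorphisms of smooth plane quartics (from \cite{B08} or \cite[Lemma 6.5.1]{CAG}): up to coordinates there are only four types, namely $3;(0,1)$, $3;(1,2)$, $7;(3,1)$, $9;(3,2)$, and in each normal form one checks directly that $[0:0:1]\in\Delta$. Your argument bypasses this list entirely: after diagonalising a generator via Theorem \ref{cyclic aut of quartic}, you split into the case of a pointwise-fixed line (which meets the irreducible quartic) and the case of three isolated fixed coordinate points, where the eigenvalue constraint on the monomials $x^4,y^4,z^4$ together with $\gcd(4,n)=1$ forces one of them to lie on $\Delta$. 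Your route is more self-contained and would extend verbatim to any odd cyclic subgroup of $\PGL_3$ preserving a smooth plane curve of degree coprime to the group order; the paper's route, by contrast, gives the precise normal forms, which is extra information not needed for the corollary but potentially useful for explicit computations elsewhere in the paper.
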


\begin{proof}
Let $f\in k[x,y,z]$ be a homogeneous polynomial which satisfies $\Delta=V_+(f)$. 
By \cite[Proposition 14]{B08} or \cite[Lemma 6.5.1]{CAG}, choosing coordinates, the pair of the type of the generator of $G$ and $f$ is one of the following lists:\\
\begin{tabular}{|l|r|r|} \hline
   $ $&Type &$f(x,y,z)$\\ \hline \hline
(i)&3;(0,1)&$z^3L_1(x,y)+L_4(x,y)$\\\hline
(ii)&3;(1,2)  &$ x^4+\alpha x^2yz+xy^3+xz^3+\beta y^2z^2 $\\\hline
 (iii) &7;(3,1)  &$x^3y+y^3z+z^3x  $\\\hline
 (iv)  &9;(3,2)  &$ x^4+xy^3+z^3y $\\\hline
\end{tabular}\\

Here $L_i$ is a general homogeneous polynomial of degree $i$ in $k[x,y]$, and $\alpha, \beta \in k$. 
For any case, the point $[0:0:1]\in \Delta$ is a $G$-fixed point. 
The preimage $\bar{\omega}^{-1}([0:0:1])$ consists of distinct two points $P_1,P_2$. Since $\bar{\omega}$ is $G$-equivariant and $[0:0:1]$ is fixed point, $G$ acts on $\bar{\omega}^{-1}([0:0:1])=\{P_1,P_2\}$. Suppose that $P_1$, $P_2$ are not fixed by the $G$-action. Then the $G$-action on $\bar{\omega}^{-1}([0:0:1])$ is not trivial. Hence the order of $G$ is even. This contradicts the assumption that the order of $G$ is odd. Therefore we have a $G$-fixed point $P_i$.
The assertion follows from Proposition \ref{CTT24 prop 5.1}.
\end{proof}

\begin{example}
Let quadrics $Q_1,Q_2$ and $Q_3$ be $\sqrt{-1}(y^2+xz), x^2+ayz$ and $\sqrt{-1}(z^2+xy)$, respectively. Assume that $Q_2^2-Q_1Q_3$ is of the form
\[x^4+x(y^3+z^3)+\alpha yzx^2+\beta y^2z^2,\]
and $\Delta=V_+(Q_2^2-Q_1Q_3)$ is smooth.
Here $\alpha\neq \beta$ and $\alpha\beta\neq 0$ hold.
The $C_3$-action on 
\[X:w^2=Q_1t_0^2+2t_0t_1Q_2+Q_3t_1^2\]
is defined by 
\[([t_0:t_1],[x:y:z],w)\mapsto ([ t_0:\zeta_3^2t_1],[x:\zeta_3 y:\zeta_3^2 z],\zeta_3 w).\]
Then the $C_3$-action is linearizable.
\end{example}

\begin{thm}(\cite[Theorem 5.2]{1} and \cite[Theorem 4.5]{13})\label{CTT24 thm 5.2}
We have $G$-equivariant isomorphisms $\PPic^0_{C/k}\overset{\sim}{\to}\Prym_{\tilde{\Delta}/\Delta}$ and $\PPic^1_{C/k}\overset{\sim}{\to} P^{(1)}$. The first and second isomorphisms are isomorphisms as $G$-equivariant principally polarized abelian varieties and as $G$-equivariant torsors of $\PPic^0_{C/k}\cong\Prym_{\tilde{\Delta}/\Delta}$, respectively. 
\end{thm}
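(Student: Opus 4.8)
The plan is to deduce this from \cite[Theorem 5.2]{1} by the equivariantization method used throughout Section \ref{section 5}: re-run the non-equivariant construction of \cite{1} (which itself rests on \cite[Theorem 4.5]{13}) while observing that every morphism involved is built out of $G$-equivariant data. Recall the non-equivariant picture: $C$ is the smooth projective curve attached to $X$ via its quadric surface bundle structure $\pi_1\colon X\to\Line^1$ — the discriminant double cover of $\Line^1$, i.e. the closure of the étale double cover of $\Line^1\setminus(\text{discriminant})$ parametrising the rulings of the fibre quadrics — and by \cite[Theorem 4.5 and Theorem 5.8]{13} there are isomorphisms $\PPic^0_{C/k}\overset{\sim}{\to}\Prym_{\tilde\Delta/\Delta}$ of principally polarized abelian varieties and $\PPic^1_{C/k}\overset{\sim}{\to}P^{(1)}$ of torsors, both obtained by comparing the two incarnations $\JJ(C)\cong\IIJ(X)$ (from the quadric bundle $\pi_1$) and $\Prym_{\tilde\Delta/\Delta}\cong\IIJ(X)$ (from the conic bundle $\pi_2$, namely the identity component of the map of Theorem \ref{CTT24 Theorem 4.4}).

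First I would put a regular $G$-action on $C$. By Proposition \ref{aut of 2.18}, via Proposition \ref{two dc}, the morphism $\pi_1$ is $\Aut(X)$-equivariant, so $G$ acts on $\Line^1$ preserving the finite discriminant locus; since $C$ is canonically attached to the quadric surface bundle $(X,\pi_1)$ — no choice enters the formation of the variety of rulings, and its extension across the discriminant is canonical — the $G$-action on $X$ lifts uniquely to a regular $G$-action on $C$ for which $C\to\Line^1$ is $G$-equivariant. Thus $C$ is functorial in the pair $(X,\pi_1)$.

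Next I would track $G$-equivariance through the comparison isomorphisms. By Proposition \ref{two dc} the conic bundle $\pi_2\colon X\to W=\Line^2$ is $G$-equivariant, hence so are $\bar\omega\colon\tilde\Delta\to\Delta$, the embedding $r\colon\Delta\hookrightarrow W$, the polarized Prym scheme $\PPrym^{\PPic_{W/k}}_{\tilde\Delta/\Delta}$ with its diagonal $G$-action (Section \ref{section 5.1}), the torsors $P^{(m)}$, $\tilde P^{(m)}$, and the surjection $\CCH^2_{X/k}\to\PPrym^{\PPic_{W/k}}_{\tilde\Delta/\Delta}$ together with $(\CCH^2_{X/k})^0\Gcong\Prym_{\tilde\Delta/\Delta}$ of Theorem \ref{CTT24 Theorem 4.4}; symmetrically, every morphism on the $C$-side of \cite[Theorem 4.5]{13} is built from the $G$-equivariant bundle $\pi_1$ and the $G$-curve $C$. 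Since $\IIJ(X)=(\CCH^2_{X/k})^0$ carries a canonical regular $G$-action and is a $G$-ppav (Lemma \ref{action on CH}, Section \ref{section 3.2}, Section \ref{section 3.3}), both identifications $\JJ(C)\cong\IIJ(X)$ and $\Prym_{\tilde\Delta/\Delta}\cong\IIJ(X)$ are $G$-equivariant isomorphisms of $G$-ppavs; composing them yields the first asserted isomorphism $\PPic^0_{C/k}\Gcong\Prym_{\tilde\Delta/\Delta}$ of $G$-equivariant principally polarized abelian varieties. For the second, the parity of $h^0$ used to single out the component $P^{(1)}\subset V_H$ (\cite[Section 6]{Mum74}) is a numerical invariant, hence automatically $G$-invariant, so $G$ preserves $P^{(1)}$; likewise the degree is $G$-invariant so $G$ preserves $\PPic^1_{C/k}\subset\PPic_{C/k}$, and both are $G$-equivariant torsors over $\PPic^0_{C/k}\Gcong\Prym_{\tilde\Delta/\Delta}$. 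The comparison isomorphism of \cite[Theorem 4.5]{13} matches them compatibly with the group actions, giving the $G$-equivariant isomorphism of torsors $\PPic^1_{C/k}\Gcong P^{(1)}$.

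The main obstacle is precisely the bookkeeping of the last two paragraphs: one must verify that the construction of $C$ and of the isomorphism $\JJ(C)\cong\Prym_{\tilde\Delta/\Delta}$ in \cite{13} is canonical enough to be $G$-equivariant. If it passes through an auxiliary choice — a ruling of a fibre, a rational point, or a theta characteristic on $\Delta$ or $\tilde\Delta$ — one has to argue either that the choice can be made $G$-equivariantly or that the resulting map is independent of it up to a unique isomorphism, and then that these isomorphisms glue. Over an algebraically closed field of characteristic zero this presents no genuine difficulty, but it is the step that demands care.
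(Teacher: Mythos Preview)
Your proposal is correct and matches the paper's approach exactly: the paper gives no proof for this theorem beyond the blanket remark at the start of Section \ref{section 5} that ``Theorem \ref{CTT24 Theorem 4.4}, Proposition \ref{CTT24 prop 5.1}, Theorem \ref{CTT24 thm 5.2}, \ldots\ can be proven in the same way as \cite{1},'' together with the citations to \cite[Theorem 5.2]{1} and \cite[Theorem 4.5]{13}. Your sketch is in fact considerably more detailed than anything the paper provides --- including the identification of $C$ as the discriminant double cover of the quadric surface bundle $\pi_1$, which the paper leaves implicit --- and your flagged obstacle about canonicity of the constructions in \cite{13} is the right thing to worry about.
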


\begin{proof}
We give an outline of the proof. Let $x$ be a general point of $C$ with the image $t\in \Line^1$. Then the fiber $S_t$ by $\pi_1$ at $t$ is a smooth quadric surface. A general point $X$ corresponds to a ruling in of lines on $S_t$. By taking $\ell$ as a general in this ruling, we have an effective $0$-cycle $\pi_2^*\Delta\cap\ell$ of degree $4$, and its push-forward to define an effective $0$-cycle on $\tilde{\Delta}$ of degree $4$. Hence we get a $G$-equivariant morphism $C\to \PPic_{\tilde{\Delta}/k}$, and its image is contained in $P^{(1)}$. See \cite[Theorem 5.2]{1} for details. The morphism $C\to P^{(1)}$ induces a $G$-equivariant morphism $\PPic^1_{C/k}\to P^{(1)}$.
Since the $G$-equivariant morphism $\PPic^1_{C/k}\to P^{(1)}$ is the same morphism given in \cite[Theorem 4.5]{13} (See \cite[proposition 6.3 (iii)]{13}). Hence this morphism is isomorphic. By \cite[Section 5, Case 4]{Bru08}, the morphism $C\to P^{(1)}$ induces a $G$-equivariant isomorphism $\PPic^0_{C/k}\to P^{(0)}$.
\end{proof}

Let $\gamma_1$ and $\gamma_2$ be the classes of lines in the fibers of $\pi_1:X\to \Line^1$ and $\pi_2:X\to \Line^2$, respectively. Then the classes $\gamma_1$ and $\gamma_2$ generate $\NS^2(X)$ as a $\Z$-module.
\begin{thm}(\cite[Theorem 5.3]{1} and \cite[Theorem 6.4]{13})\label{CTT24 Theorem 5.3}
The $G$-equivariant isomorphism of group schemes in Theorem \ref{CTT24 Theorem 4.4} induces the following $G$-equivariant isomorphisms between connected components of each other 
\[(\CCH^2_{X/k})^{m\gamma_1+n\gamma_2}\Gcong 
\begin{cases}
P&\text{$m,n$ are even;}\\
\tilde{P}&\text{$m$ is even and $n$ is odd;}\\
P^{(1)}&\text{$m$ is odd and $n$ is even;}\\
\tilde{P}^{(1)}&\text{$m,n$ are odd.}
\end{cases}
\]
\end{thm}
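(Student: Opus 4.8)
The plan is to track how the connected-component structure of $\CCH^2_{X/k}$ corresponds, under the $G$-equivariant isomorphism of group schemes $\CCH^2_{X/k}\to \PPrym^{\PPic_{W/k}}_{\tilde{\Delta}/\Delta}$ of Theorem \ref{CTT24 Theorem 4.4} (here $W=\Line^2$, so $r^*$ is injective), through the indexing of the torsors $P^{(m)}$, $\tilde{P}^{(m)}$ introduced in Section \ref{section 5.1}. First I would recall that the components of $\CCH^2_{X/k}$ are indexed by $\NS^2(X)$, which is freely generated over $\Z$ by the two line classes $\gamma_1$ (fibre line of $\pi_1$) and $\gamma_2$ (fibre line of $\pi_2$). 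On the Prym side, $V_0$ has two components $P=P^{(0)}$ and $\tilde P=\tilde P^{(0)}$, while $V_H$ has the two components $P^{(1)}$ and $\tilde P^{(1)}$ distinguished by parity of $h^0$; more generally $P^{(2m+e)}=P^{(e)}+mH$ and similarly with tildes. Since the Prym scheme only sees the pullback $r^*D$ for $D\in\Pic(W)=\Pic(\Line^2)=\Z H$, the component lattice of $\PPrym^{\PPic_{W/k}}_{\tilde\Delta/\Delta}$ is $\Z\oplus(\Z/2\Z)$, with the $\Z$-factor recording the coefficient of $H$ and the $\Z/2\Z$-factor recording the $P$-versus-$\tilde P$ dichotomy. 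So the core of the argument is to compute the images of $\gamma_1$ and $\gamma_2$ in this $\Z\oplus(\Z/2\Z)$.

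The key computation is to identify where the classes $\gamma_1$ and $\gamma_2$ land. I would argue that $\gamma_2$, the class of a line in a fibre $\ell\cup\ell'$ of the conic bundle $\pi_2$ over a point of $\Delta$, maps to the component $\tilde P=\tilde P^{(0)}$: it lies over $0\in\Pic(\Line^2)$ (a fibre component meets the generic fibre in zero and has $\pi_2$-degree $0$ in the relevant sense), but it represents the nontrivial class in the $\Z/2\Z$-factor, since $2\gamma_2$ is the full fibre class which is rationally equivalent to $\pi_2^*(\text{pt})$ and hence lies in $P$. Dually, $\gamma_1$ — a line in a fibre of the quadric surface bundle $\pi_1$ — should map to $P^{(1)}$: its image in $\Pic(\tilde\Delta)$ has the right degree to sit over $H$ (the lines in the quadric-bundle fibres correspond to the $\Line^2$-hyperplane data via the incidence with $\Delta$), and the parity of $h^0$ on that component is the even one, placing it in $P^{(1)}$ rather than $\tilde P^{(1)}$. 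With these two facts, since the map $\NS^2(X)\to\Z\oplus(\Z/2\Z)$ is a group homomorphism compatible with the torsor structures, for $m\gamma_1+n\gamma_2$ we get the $H$-coefficient $\equiv m$ and the $\Z/2\Z$-class $\equiv m+n+\text{(correction)}$; checking the four parity cases against the table — $(m,n)$ both even $\mapsto P$, $m$ even $n$ odd $\mapsto\tilde P$, $m$ odd $n$ even $\mapsto P^{(1)}$, both odd $\mapsto\tilde P^{(1)}$ — pins down the normalization and finishes the identification. The $G$-equivariance is automatic: every morphism in sight (the isomorphism of Theorem \ref{CTT24 Theorem 4.4}, translation by $H$ which is $G$-invariant since $\pi_2$ is $\Aut(X)$-equivariant, the Abel–Jacobi maps) is $G$-equivariant, so a $G$-equivariant isomorphism of abelian varieties on the identity component induces $G$-equivariant isomorphisms of all the torsor components.

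The main obstacle I expect is the precise matching of $\gamma_1$ with the \emph{even} component $P^{(1)}$ (as opposed to $\tilde P^{(1)}$): this requires computing the parity of $h^0$ for the line bundle on $\tilde\Delta$ associated to a line in a quadric-surface-bundle fibre, which is exactly the content of the non-equivariant statement \cite[Theorem 6.4]{13}. Rather than redo this, I would cite \cite[Theorem 6.4]{13} (equivalently \cite[Theorem 5.3]{1}) for the component-level identification over $k$, and then only need to verify that the identifications are compatible with the $G$-action — which, as noted, is formal given that all structure maps are $G$-equivariant and that $\gamma_1,\gamma_2$ generate $\NS^2(X)$ with the $G$-action on $\NS^2(X)$ being the one induced from $X$. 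Thus the proof reduces to: (1) invoke Theorem \ref{CTT24 Theorem 4.4} for the isomorphism of group schemes; (2) invoke \cite[Theorem 6.4]{13} for the non-equivariant component dictionary; (3) observe $G$-equivariance of each piece and conclude.
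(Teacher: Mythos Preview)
Your proposal is correct and matches the paper's approach: the paper does not give a standalone proof of this theorem but simply records it with citations to \cite[Theorem 5.3]{1} and \cite[Theorem 6.4]{13}, having already noted at the start of Section~\ref{section 5} that this result (along with several others) ``can be proven in the same way as \cite{1}.'' Your three-step reduction---invoke Theorem~\ref{CTT24 Theorem 4.4} for the $G$-equivariant morphism of group schemes, invoke \cite[Theorem 6.4]{13} for the non-equivariant component dictionary, and then observe that every map involved is $G$-equivariant---is exactly this strategy spelled out, and in fact is more detailed than what the paper supplies.
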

\begin{proof}
The isomorphisms \cite[Theorem 6.4]{13} are induced by the $G$-equivariant morphism in \ref{CTT24 Theorem 4.4}. By \cite[Theorem 6.4]{13} and \ref{CTT24 thm 5.2}, we obtain these $G$-equivariant isomorphisms. 
\end{proof}
\begin{dfn}
We define 
\[\calF_{1,1}(X):=\overline{\calM}_{0.0}(X,\gamma_1+\gamma_2)\]
which is the coarse moduli of stable maps of genus $0$ and class $\gamma_1+\gamma_2$.
\end{dfn}
Similarly to \cite{1}, the following propositions hold.
\begin{prop}(\cite[Proposition 5.6]{1})\label{CTT24 prop 5.6}
The moduli space $\calF_{1,1}$ is a smooth projective threefold and the Abel-Jacobi map
\[\AJ:\calF_{1,1}(X)\to (\CCH^2_{X/k})^{\gamma_1+\gamma_2}\]
is a $\Line^1$-fibration. 
\end{prop}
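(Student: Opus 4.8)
My plan is to describe the genus-zero stable maps of class $\gamma_1+\gamma_2$ explicitly, deduce smoothness from the deformation theory of stable maps, and then identify the fibres of $\AJ$ with lines using the Prym-theoretic results of Section \ref{section 5.2}. Since $-K_X=\pi^*\ShO_{\Line^1\times\Line^2}(1,2)$, a direct computation gives $-K_X\cdot\gamma_1=2$, $-K_X\cdot\gamma_2=1$, hence $-K_X\cdot(\gamma_1+\gamma_2)=3$, so the expected dimension of $\calF_{1,1}(X)=\overline{\calM}_{0,0}(X,\gamma_1+\gamma_2)$ is $\dim X-3+3=3$. For an irreducible member $f\colon C\to X$, the equalities $H_i\cdot f_*C=1$ force $\pi_2$ to carry $C$ birationally onto a line $\ell\subset\Line^2$ and $\pi_1|_C$ to have degree one; thus $C$ lies on $S_\ell:=\pi_2^{-1}(\ell)$, which for general $\ell$ is the double cover of $\Line^1\times\ell$ branched along a smooth $(2,2)$-curve, a smooth del Pezzo surface of degree $4$. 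By adjunction $[C]^2_{S_\ell}=0$, so $C$ moves in a base-point-free pencil of conics on $S_\ell$, and I would pin down which of the finitely many conic classes on such $S_\ell$ restrict to $\gamma_1+\gamma_2$. The reducible members are nodal curves $C_1\cup C_2$ with $C_1\in|\gamma_1|$ a ruling of a fibre of $\pi_1$ and $C_2\in|\gamma_2|$ a component of a degenerate fibre of $\pi_2$; a dimension count shows these form a two-dimensional locus, hence lie in the boundary, and that the irreducible members already sweep out $X$ in a three-dimensional family.

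\textbf{Smoothness and projectivity.} Projectivity and properness of $\calF_{1,1}(X)$ are built into Kontsevich's construction. For smoothness I would verify $H^1(C,f^*T_X)=0$ at every point $[f]$; then $\calF_{1,1}(X)$ is smooth of dimension $\dim X-3+(-K_X\cdot(\gamma_1+\gamma_2))=3$ there. For the general, irreducible $C$ this is automatic: such curves cover $X$, so the general member is a free rational curve and $f^*T_X$ is globally generated. For a boundary point $C=C_1\cup_p C_2$ I would use the normalisation sequence $0\to f^*T_X\to f^*T_X|_{C_1}\oplus f^*T_X|_{C_2}\to(f^*T_X)_p\to 0$: computing the normal bundles from the inclusions $C_1\subset\pi_1^{-1}(t)\subset X$ and $C_2\subset S_\ell\subset X$ yields $f^*T_X|_{C_1}\cong\ShO(2)\oplus\ShO\oplus\ShO$ and $f^*T_X|_{C_2}\cong\ShO(2)\oplus\ShO\oplus\ShO(-1)$, so both have vanishing $H^1$ and the evaluation $H^0(f^*T_X|_{C_1})\to(T_X)_p$ is already surjective, giving $H^1(C,f^*T_X)=0$. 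A similar case analysis covers the remaining (deeper) boundary strata; since the irreducible members form a single irreducible family whose closure is $\calF_{1,1}(X)$, the latter is an irreducible smooth projective threefold.

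\textbf{The Abel--Jacobi map is a $\Line^1$-fibration.} By Theorem \ref{CTT24 Theorem 5.3} with $m=n=1$, the component $(\CCH^2_{X/k})^{\gamma_1+\gamma_2}$ equals the surface $\tilde P^{(1)}$; since $\dim\calF_{1,1}(X)=3$ it remains to show that $\AJ$ is surjective with general fibre $\cong\Line^1$. The key observation is that $\AJ$ is constant on each pencil of conics $|C|_{S_\ell}$ of the first step, because its members are rationally equivalent on $S_\ell$ and hence on $X$, so have the same class in $\CH^2(X)$. Thus $\AJ$ factors through the two-dimensional space $B$ parametrising these pencils. To see that the induced map $B\to\tilde P^{(1)}$ is an isomorphism I would use the $G$-equivariant isomorphism $\CCH^2_{X/k}\to\PPrym^{\PPic_{W/k}}_{\tilde\Delta/\Delta}$ of Theorem \ref{CTT24 Theorem 4.4} and the morphism $\varphi\colon\CCH^2_{X/k}\to\PPic_{\tilde\Delta/k}$ appearing in its proof, together with the description of $\tilde P^{(1)}$ inside $\PPic_{\tilde\Delta/k}$ furnished by Theorem \ref{CTT24 thm 5.2} and the definitions of $P^{(1)}$ and $\tilde P^{(1)}$: the class $\varphi([C])$ records, from a conic $C$ on $S_\ell$, the divisor on $\tilde\Delta$ supported at the components of the degenerate fibres of $\pi_2$ met by $C$, and this assignment is bijective on pencils. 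Granting this, the fibres of $\AJ$ are exactly the pencils $|C|_{S_\ell}\cong\Line^1$, and $\AJ$ is surjective because $\calF_{1,1}(X)$ is connected of dimension $3$, $\tilde P^{(1)}$ has dimension $2$, and the fibres are equidimensional of dimension one.

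\textbf{Main obstacle.} The hardest step is the last one: proving that the base of the $\AJ$-fibration is precisely $\tilde P^{(1)}$ and that each fibre is a reduced, irreducible, smooth rational curve, rather than a disjoint union of several pencils or a pencil with embedded or nonreduced structure. This requires a careful analysis of the $\gamma_1+\gamma_2$-curves and their degenerations over the special lines $\ell$ for which $S_\ell$ is not a del Pezzo surface, combined with the Prym-theoretic identifications in Theorems \ref{CTT24 Theorem 4.4}, \ref{CTT24 thm 5.2} and \ref{CTT24 Theorem 5.3}. Because those results are available over $k$ in the $G$-equivariant form established in this paper, the argument of \cite{1} goes through unchanged.
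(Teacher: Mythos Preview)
Your classification of the stable maps and the smoothness argument via $H^1(R,f^*T_X)=0$ match the paper's. One small point: there are no ``deeper boundary strata'' to analyse, since $\gamma_1,\gamma_2$ freely generate $\NS^2(X)$ and an unpointed genus-$0$ stable map cannot have a contracted component meeting only two others; so the two types you (and the paper) list are the complete inventory.

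For the $\Line^1$-fibration, the paper takes a shorter route than you do. Instead of first invoking Theorem~\ref{CTT24 Theorem 5.3} to identify the target with $\tilde P^{(1)}$ and then trying to prove, via the Prym dictionary, that the assignment ``pencil of conics on $S_\ell$'' $\mapsto$ ``point of $\tilde P^{(1)}$'' is a bijection, the paper introduces the \emph{line map}
\[
\varphi:\calF_{1,1}(X)\longrightarrow(\Line^2)^{\lor},\qquad [f:R\to X]\longmapsto \pi_2(f(R)),
\]
and takes its Stein factorisation $\calF_{1,1}(X)\to B\to(\Line^2)^{\lor}$. The first arrow is then a $\Line^1$-fibration essentially by your own observation (over each $\ell$ the curves form a pencil on $S_\ell$), and since $\AJ$ is constant on these pencils it factors through $B$. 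One then checks that $\AJ$ is dominant with connected fibres; comparing dimensions, the induced map $B\to(\CCH^2_{X/k})^{\gamma_1+\gamma_2}$ is an isomorphism, and the conclusion follows. This bypasses exactly the step you flag as the main obstacle: you never have to track $\varphi([C])$ explicitly in $\PPic_{\tilde\Delta/k}$, nor analyse the degenerations of $S_\ell$ over special lines, because Stein factorisation delivers the connected-fibre statement for free. Your approach is not wrong, but it reproves by hand what the paper gets structurally.
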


\begin{proof}
We give an outline of the proof. 
Let $[f:R\to X]\in \calF_{1,1}(X)$ be a stable map of genus $0$. Then $f$ is one of the followings:
\begin{itemize}
\item[(1)] The curve $R$ is isomorphic to $\Line^1$, the map $f$ is an embedding, and the class of $f(R)$ is $\gamma_1+\gamma_2$. 
\item[(2)] The curve $R$ is an union of $R_1\cong \Line^1$ and $R_2\cong \Line^1$, and $f$ induces an isomorphism from $R_i$ to a line in the fiber of $\pi_i$, for $i=1,2$. 
\end{itemize}
In both types, we have $H^1(R,f^*T_X)=0$. Hence, we see that $[f:R\to X]\in \calF_{1,1}(X)$ is a smooth point. Therefore $\calF_{1,1}(X)$ is a smooth projective variety of dimension $-K_X.R=3$. 

Let $f:R\to X$ be a stable map of genus $0$. Then $\ell:=\pi_2(f(R))$ is a line in $\Line^2$. Hence we get a morphism 
\[\varphi: \calF_{1,1}(X)\to (\Line^2)^{\lor}.\]
We denote the Stein factorization of $\varphi$ by 
\[\calF_{1,1}(X)\to B\to(\Line^2)^{\lor}. \]
Then $\calF_{1,1}(X)\to B$ is a $\Line^1$-fibration, and the Abel-Jacobi map $\AJ$ is dominant with connected fibers. Therefore $\AJ$ is a $\Line^1$-fibration. See \cite[Proposition 5.6]{1} for details. 
\end{proof}

\begin{thm}(\cite[Theorem 5.4]{1})\label{CTT24 Theorem 5.4}
Assume that $G$ is cyclic. Then $X$ is $G$-linearizable if and only if $\tilde{P}$ or $\tilde{P}^{(1)}$ is a trivial torsor.
\end{thm}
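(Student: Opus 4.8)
The statement is an ``if and only if'', and the plan is to obtain necessity from the equivariant IJT-obstruction and sufficiency from an explicit $G$-equivariant birational construction, following \cite{1} and using the generalizations set up in Sections~\ref{section 3} and \ref{section 5}.

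\emph{Necessity.} If the $G$-action on $X$ is $G$-linearizable it is in particular projectively linearizable, so Theorem~\ref{IJT} produces a smooth projective $G$-curve $C$ with $(\CCH^2_{X/k})^{0}\Gcong\PPic^0_{C/k}$ as $G$-ppav and $(\CCH^2_{X/k})^{\gamma}\Gcong\PPic^{n(\gamma)}_{C/k}$ for every $G$-invariant $\gamma\in\NS^2(X)$. By Theorem~\ref{CTT24 Theorem 4.4}, $(\CCH^2_{X/k})^{0}\Gcong\Prym_{\tilde\Delta/\Delta}$, which by Theorem~\ref{CTT24 thm 5.2} is the Jacobian of a smooth genus-$2$ curve and hence is indecomposable as a ppav, a fortiori as a $G$-ppav; so by Corollary~\ref{decomposition of ppav} and the Torelli theorem we may take $C$ to be the curve of Theorem~\ref{CTT24 thm 5.2}, so that $\PPic^1_{C/k}\Gcong P^{(1)}$. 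Since the degree map makes $\gamma\mapsto[(\CCH^2_{X/k})^{\gamma}]$ a homomorphism into the group $H^1(G,\Prym_{\tilde\Delta/\Delta})$ of $G$-torsors, Theorem~\ref{CTT24 Theorem 5.3} gives $[(\CCH^2_{X/k})^{m\gamma_1+n\gamma_2}]=m[P^{(1)}]+n[\tilde P]$, and the IJT-obstruction forces this to lie in $\langle[P^{(1)}]\rangle$ for all $m,n$; in particular $[\tilde P]\in\langle[P^{(1)}]\rangle$. On the other hand $\tilde P$ is the non-identity component of the two-component group $\ker\Nm(\bar\omega)$, whence $[\tilde P]$ is $2$-torsion, and comparing with $P^{(1)}\boxplus P^{(1)}\subset V_{2H}$ pins $[\tilde P]$ down in terms of $[P^{(1)}]$. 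Combining these (and, where they do not suffice, the discrete $\NS$-data tracked through the equivariant weak factorization, which constrains the homomorphism $\NS^2(X)\to\NS^1(C)$ realizing the isomorphisms, exactly as in \cite{1}) one concludes that either $[\tilde P]=0$, i.e.\ $\tilde P$ is a trivial torsor, or $[\tilde P]=-[P^{(1)}]$, i.e.\ $\tilde P^{(1)}=\tilde P\boxplus P^{(1)}$ is a trivial torsor.

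\emph{Sufficiency.} Conversely, assume $\tilde P$ or $\tilde P^{(1)}$ is a trivial $G$-torsor; I will produce a $G$-equivariant birational map from $X$ to $\Line^3$ with a linear $G$-action. If $\tilde P$ is trivial then $(\CCH^2_{X/k})^{\gamma_2}=\tilde P$ has a $G$-fixed point, and since the effective $1$-cycles of class $\gamma_2$ are exactly the lines in the degenerate fibres of $\pi_2$ (parametrized $G$-equivariantly by $\tilde\Delta$ through an Abel-Jacobi map that is an embedding off a proper closed subset) one extracts a $G$-fixed point of $\tilde\Delta$, i.e.\ $\tilde\Delta^G\neq\emptyset$, and the conclusion follows from Proposition~\ref{CTT24 prop 5.1}. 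If $\tilde P^{(1)}$ is trivial then by Theorem~\ref{CTT24 Theorem 5.3} the component $(\CCH^2_{X/k})^{\gamma_1+\gamma_2}=\tilde P^{(1)}$ has a $G$-fixed point $p_0$; by Proposition~\ref{CTT24 prop 5.6} the Abel-Jacobi map $\AJ\colon\calF_{1,1}(X)\to(\CCH^2_{X/k})^{\gamma_1+\gamma_2}$ is a $\Line^1$-fibration, so $\AJ^{-1}(p_0)$ is a $G$-invariant rational curve on which the cyclic group $G$ has a fixed point, yielding a $G$-invariant stable map of class $\gamma_1+\gamma_2$ and hence a $G$-invariant curve $Y\subset X$ that is either a line in a $\pi_1$-fibre joined at a point to a line in a $\pi_2$-fibre, or a smooth conic. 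In the reducible case the two components carry the distinct classes $\gamma_1,\gamma_2$, so $G$ preserves each, giving again $\tilde\Delta^G\neq\emptyset$ and reducing to Proposition~\ref{CTT24 prop 5.1}. In the smooth-conic case one blows up $Y$ $G$-equivariantly and uses the $G$-linearized, base-point-free linear system $|p^*H_1+p^*H_2-E|$ (with $H_i$ the pullbacks of the hyperplane classes and $E$ the exceptional divisor) as in the proof of Proposition~\ref{CTT24 prop 5.1}: the associated $G$-equivariant morphism realizes a model of $X$ birationally onto a quadric threefold $Q\subset\Line^4$ with a linear $G$-action, $Q$ contains a $G$-stable plane hence (as $G$ is cyclic) a $G$-fixed point, and projection from that point is a $G$-equivariant birational map onto $\Line^3$ with a linear $G$-action. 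Thus $X$ is $G$-linearizable.

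\emph{Main difficulty.} The harder direction is necessity: the bare IJT-obstruction only detects the torsor classes one component at a time, so the real content is to combine it with the fine structure of $V_0$ and $V_H$ (and, if that is not enough, with the discrete invariants carried along by the equivariant weak factorization) in order to exclude the intermediate torsor classes and force $\tilde P$ or $\tilde P^{(1)}$ to be trivial. On the sufficiency side the delicate points are realizing the abstract triviality of a torsor by an honest $G$-invariant curve on $X$ — in particular for $\tilde P$, where the family of effective $\gamma_2$-cycles does not surject onto $(\CCH^2_{X/k})^{\gamma_2}$ — and verifying that the blow-up, the linear system, and the projection of Proposition~\ref{CTT24 prop 5.1} remain valid $G$-equivariantly for a possibly singular or reducible centre; both are handled as over $\C$ in \cite{1}.
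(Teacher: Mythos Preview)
Your necessity argument is essentially the paper's: Theorems~\ref{IJT}, \ref{CTT24 thm 5.2} and \ref{CTT24 Theorem 5.3} force each of $\tilde P$ and $\tilde P^{(1)}$ to be $G$-isomorphic to some $\PPic^n_{C/k}$, hence to $P$ or to $P^{(1)}$ (the canonical class $K_C$ trivializes $\PPic^2_{C/k}$, so $2[P^{(1)}]=0$), and the relation $[\tilde P^{(1)}]=[\tilde P]+[P^{(1)}]$ then makes one of them trivial. The paper states this in one sentence; your hedge about ``discrete $\NS$-data tracked through the equivariant weak factorization'' is unnecessary --- the torsor bookkeeping already closes the argument, and nothing further is needed.

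There is, however, a genuine gap in your sufficiency argument for the case $\tilde P$ trivial. You claim that a $G$-fixed point of $\tilde P\cong(\CCH^2_{X/k})^{\gamma_2}$ yields $\tilde\Delta^G\neq\emptyset$ via the Abel--Jacobi map on effective $\gamma_2$-cycles, but that map $\tilde\Delta\to\tilde P$ goes from a genus-$5$ curve into a $2$-dimensional torsor and is not surjective; a fixed point of $\tilde P$ has no reason to lie in its image. You acknowledge this in your last paragraph but do not repair it, and it cannot be repaired along the lines you suggest. The paper does not attempt this route: it simply cites \cite[Propositions~5.7 and~5.8]{1} for the converse, and the construction there for the $\tilde P$-trivial case is not the one you sketch. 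Your treatment of the $\tilde P^{(1)}$-trivial case via $\calF_{1,1}(X)$ and Proposition~\ref{CTT24 prop 5.6} is on the right track, but note that the blow-up step is not literally ``as in Proposition~\ref{CTT24 prop 5.1}'': there the centre $\ell$ has class $\gamma_2$, whereas your smooth conic has class $\gamma_1+\gamma_2$, so the computation of $|p^*H_1+p^*H_2-E|$ and of the image quadric has to be redone.
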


\begin{proof}
Assume that $X$ is $G$-linearizable. By Theorem \ref{IJT}
, \ref{CTT24 thm 5.2}, 
and \ref{CTT24 Theorem 5.3}, one of $\tilde{P}$ and $\tilde{P}^{(1)}$ is isomorphic to $P$, and the other is isomorphic to $P^{(1)}$. Since $P$ is trivial, then $\tilde{P}$ or $\tilde{P}^{(1)}$ is a trivial torsor. See \cite[Proposition 5.7 and 5.8]{1} for the converse. 
\end{proof}

The following example is an application of the above theorem.
\begin{thm}\cite[Example 5.14]{1}\label{CTT24 Example 5.14}
Let $\tau$ be the non-trivial involution on $X$ acting trivially on $\Line^1\times\Line^2$. Then, $X$ is $\langle\tau\>$-linearizable. 
\end{thm}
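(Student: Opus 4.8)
The plan is to apply Theorem \ref{CTT24 Theorem 5.4}. Since $G=\langle\tau\rangle$ is cyclic of order two, that theorem reduces the claim to showing that one of the $G$-equivariant torsors $\tilde{P}$, $\tilde{P}^{(1)}$ of $P:=\Prym_{\tilde{\Delta}/\Delta}$ is trivial. The crux is to pin down the $G$-action on $P$ and on its torsors: I will argue that $\tau$ acts on $P$ by inversion, after which triviality of every such torsor is a formal consequence of the divisibility of $P(k)$.

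First I would identify the action of $\tau$ on $\tilde{\Delta}$. Because $\tau$ acts trivially on $\Line^1\times\Line^2$, in particular on $\Line^2$, the standard conic bundle $\pi_2:X\to\Line^2$ is $\tau$-equivariant over $\Line^2$, so $\tau$ induces an automorphism of $\tilde{\Delta}$ over $\Delta$. Over a point $\delta\in\Delta$ the fibre of $\pi_2$ is the double cover of $\Line^1_{[t_0:t_1]}\times\{\delta\}$ branched along the zero locus of $t_0^2Q_1(\delta)+2t_0t_1Q_2(\delta)+t_1^2Q_3(\delta)$; as $\delta$ lies on $\Delta=\{Q_2^2-Q_1Q_3=0\}$ this binary quadratic is a perfect square $(\alpha t_0+\beta t_1)^2$, so the fibre is the union of the two lines $\{w=\pm(\alpha t_0+\beta t_1)\}$, and the deck involution $\tau:w\mapsto-w$ interchanges them. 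Hence $\tau$ swaps the two components of every degenerate fibre of $\pi_2$, i.e. $\tau$ acts on $\tilde{\Delta}$ as the deck transformation $\iota$ of $\bar{\omega}:\tilde{\Delta}\to\Delta$.

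I would then deduce that $\tau=\iota$ acts by $-1$ on $P$. From the identity $\bar{\omega}^*\bar{\omega}_*D=D+\iota^*D$ for divisors on $\tilde{\Delta}$, every $D\in\ker(\Nm(\bar{\omega}))$ satisfies $\iota^*D=\bar{\omega}^*(\bar{\omega}_*D)-D=-D$, so $\iota$ acts as inversion on the group scheme $V_0=\ker(\Nm(\bar{\omega}))$, hence on its identity component $P$ and on the remaining component $\tilde{P}$; via the $G$-equivariant isomorphism of Theorem \ref{CTT24 Theorem 4.4} this is consistent with the cycle-level relation $\tau[\ell]=[\pi_2^{-1}(\delta)]-[\ell]$ for a line $\ell$ in a fibre of $\pi_2$ over $\delta\in\Delta$. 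Now $\tilde{P}$ is a $G$-equivariant torsor of $P$ with $G$ acting by $-1$ on $P$; pick $x_0\in\tilde{P}(k)$, which exists since $k$ is algebraically closed, and write $\tau(x_0)=x_0+c$ with $c\in P(k)$ under the torsor action. Since $P$ is an abelian variety over $k$, there is $p_0\in P(k)$ with $2p_0=c$, and then $\tau(x_0+p_0)=\tau(x_0)+\tau(p_0)=(x_0+c)-p_0=x_0+p_0$, so $x_0+p_0\in\tilde{P}(k)$ is $\tau$-fixed and $\tilde{P}$ is a trivial $G$-torsor. By Theorem \ref{CTT24 Theorem 5.4}, $X$ is $\langle\tau\rangle$-linearizable.

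The main obstacle is the second step: establishing that $\tau$ acts on $\tilde{\Delta}$ by the deck transformation rather than trivially — which is exactly what the local description of the conic bundle over $\Delta$ resolves — and then checking that the $G$-action carried by the Prym variety and by the torsors $\tilde{P}$, $\tilde{P}^{(1)}$ is precisely this inversion. Everything after that is the standard vanishing $H^1(\Z/2\Z,P)=0$ for the $(-1)$-action on the divisible group $P(k)$.
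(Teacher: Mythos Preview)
Your argument is correct. The identification of $\tau$ with the deck transformation $\iota$ of $\bar{\omega}:\tilde{\Delta}\to\Delta$ is the key point, and your local computation over $\delta\in\Delta$ (the binary quadratic degenerating to a square, so that $w\mapsto -w$ exchanges the two lines) is accurate. Once $\tau=\iota$ on $\tilde{\Delta}$, the relation $\iota^*D=\bar{\omega}^*\bar{\omega}_*D-D$ forces $\tau$ to act by $-1$ on all of $V_0=\ker(\Nm(\bar{\omega}))$, and then the $2$-divisibility of $P(k)$ produces a $\tau$-fixed point in $\tilde{P}$ exactly as you wrote; equivalently, any $2$-torsion point of $V_0$ lying in $\tilde{P}$ is fixed, and such points exist since $[2]:V_0\to P$ is surjective. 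Theorem~\ref{CTT24 Theorem 5.4} then gives linearizability.

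The paper does not supply its own proof of this statement; it simply records the result with a citation to \cite[Example~5.14]{1}. Your route through Theorem~\ref{CTT24 Theorem 5.4} and the inversion action on the Prym is the expected one and is consistent with how the surrounding results in the paper are organized. One small remark: your aside that $\tilde{P}^{(1)}$ also carries the $(-1)$-action is not needed (and would require checking compatibility with the parity decomposition of $V_H$); it suffices, as you in fact do, to exhibit a fixed point in $\tilde{P}$ alone. Note also that Proposition~\ref{CTT24 prop 5.1} is unavailable here precisely because $\tau=\iota$ has no fixed point on the \'etale cover $\tilde{\Delta}$, so the torsor argument is genuinely required.
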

\begin{proof}
We give an outline of the proof.
We find a curve $R$ of the class $\gamma_1+\gamma_2$ which induces a $\langle\tau\>$-fixed point of $\tilde{P}^{(1)}$. See \cite[Example 5.14]{1} for details. $\tilde{P}^{(1)}$ is a trivial $P$-torsor by the fixed point. 
\end{proof}

From the above theorem and Theorem \ref{aut of general}, we get the following corollary:
\begin{cor}\label{general linearizable}
A smooth Fano threefold X of \textnumero 2.18, corresponding to a general point of $M^{sm}_{(2,2)}$, is $\Aut(X)$-linearizable.
\end{cor}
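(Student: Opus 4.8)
The plan is to deduce the statement immediately from two results already in place: the computation of $\Aut(X)$ for a general $X$ in Theorem \ref{aut of general}, and the $\langle\tau\rangle$-linearizability of the covering involution in Theorem \ref{CTT24 Example 5.14}. First I would recall that, by Theorem \ref{aut of general}, there is a dense open subset $U\subset M^{sm}_{(2,2)}$ such that for every $X$ corresponding to a point of $U$ one has $\Aut(X)=\langle\tau\rangle\cong\Z/2\Z$, where $\tau\colon X\to X$ is the non-trivial involution of the double cover $\pi\colon X\to\Line^1\times\Line^2$ acting trivially on $\Line^1\times\Line^2$. Concretely, $U$ can be taken to be the preimage under $\Phi$ of the locus of smooth plane quartics with trivial automorphism group; this preimage is dense because $\Phi$ is dominant by Lemma \ref{morph form (2,2) to 4}, and on it $\Aut(X)\cong\Z/2\Z$ by Corollary \ref{upper of aut of 2.18}.

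Second, I would invoke Theorem \ref{CTT24 Example 5.14}, which asserts that any double cover $X$ of $\Line^1\times\Line^2$ branched in a smooth $(2,2)$-divisor is $\langle\tau\rangle$-linearizable. Combining the two statements: for $X$ over a point of $U$ we have $\Aut(X)=\langle\tau\rangle$, so ``$\langle\tau\rangle$-linearizable'' is literally ``$\Aut(X)$-linearizable'', which is the asserted conclusion.

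The only point requiring attention is the compatibility of the two genericity hypotheses, i.e.\ that the locus of $X$ with $\Aut(X)\cong\Z/2\Z$ is nonempty and that the conclusion of Theorem \ref{CTT24 Example 5.14} applies there; but the former is exactly the content of Theorem \ref{aut of general}, and both conditions hold on dense open subsets of the irreducible variety $M^{sm}_{(2,2)}$, so their intersection is again a dense open subset, over which the conclusion holds. I do not expect any substantive obstacle: the entire content lies in Theorem \ref{aut of general} (which in turn rests on Lemma \ref{morph form (2,2) to 4} and the triviality of the automorphism group of a general plane quartic) and in Theorem \ref{CTT24 Example 5.14}, and the corollary is the purely formal combination of the two.
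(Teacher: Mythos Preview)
Your proposal is correct and follows exactly the same approach as the paper, which simply states that the corollary follows from Theorem \ref{CTT24 Example 5.14} together with Theorem \ref{aut of general}. Your additional care about intersecting open loci is harmless but unnecessary: on the locus where $\Delta$ is smooth with trivial automorphism group (dense by Theorem \ref{aut of general}), Theorem \ref{CTT24 Example 5.14} already applies directly, so no further intersection is needed.
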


\subsubsection*{Examples of non-linearizable actions}
Similar to \cite{1}, the following also holds. 
\begin{prop}(\cite[Proposition 5.9]{1})\label{non-linearizable prop}
Let $\tau\in \Aut(X)$ be an involution such that 
\begin{itemize}
\item[(1)] the induced action on $\Line^1$ via $\pi_1:X\to \Line^1$ is trivial, 
\item[(2)] the induced action on $\Line^2$ via $\pi_2:X\to \Line^2$ is non-trivial, and 
\item[(3)] $\tilde{\Delta}^{\tau}=\emptyset$.
\end{itemize}
Then $X$ is not $\langle \tau \rangle$-projectively linearizable. 
\end{prop}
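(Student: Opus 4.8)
The plan is to argue by contradiction through the equivariant IJT-obstruction. Put $G=\langle\tau\rangle\cong\Z/2\Z$ and suppose $X$ is $G$-projectively linearizable. First, $\tau$ fixes $\gamma_1$ and $\gamma_2$ in $\NS^2(X)$: since $\pi_1$ and $\pi_2$ are $\Aut(X)$-equivariant (Proposition \ref{two dc}) and $\tau$ acts trivially on $\Line^1$ (hypothesis (1)), the image under $\tau$ of a line in a fibre of $\pi_i$ is again such a line, hence of the same class. As $\gamma_1,\gamma_2$ generate $\NS^2(X)$, every class of $\NS^2(X)$ is $G$-invariant, so Theorem \ref{IJT} produces a smooth projective $G$-curve $C$ with $G$-equivariant isomorphisms $(\CCH^2_{X/k})^{\gamma}\Gcong\PPic^{n(\gamma)}_{C/k}$ for all $\gamma$, and $(\CCH^2_{X/k})^{0}\Gcong\PPic^{0}_{C/k}$ as $G$-equivariant principally polarized abelian varieties.

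By Theorems \ref{CTT24 thm 5.2} and \ref{CTT24 Theorem 5.3}, the connected components of $\CCH^2_{X/k}$ have the four isomorphism types $P=\Prym_{\tilde\Delta/\Delta}$, $\tilde P$, $P^{(1)}$, $\tilde P^{(1)}$, and $\PPic^0_{C/k}\Gcong P$, $\PPic^1_{C/k}\Gcong P^{(1)}$. Running the argument of the forward implication of Theorem \ref{CTT24 Theorem 5.4} — which uses $X$ only through Theorem \ref{IJT}, hence applies to $G$-projectively linearizable $X$ — gives that $\tilde P$ or $\tilde P^{(1)}$ is a trivial $G$-torsor over $P$, i.e. has a $\tau$-fixed point. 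Since $\tilde P=(\CCH^2_{X/k})^{\gamma_2}$ and $\tilde P^{(1)}=(\CCH^2_{X/k})^{\gamma_1+\gamma_2}$ differ by $(\CCH^2_{X/k})^{\gamma_1}=P^{(1)}$ inside the group scheme $\CCH^2_{X/k}$, and $P^{(1)}$ carries a $\tau$-fixed point (following \cite{1}, e.g. the class of a $\tau$-invariant ruling of a degenerate fibre of the quadric surface bundle $\pi_1$), translation gives $\tilde P\Gcong\tilde P^{(1)}$, and it suffices to reach a contradiction from $(\tilde P^{(1)})^{\tau}\neq\emptyset$.

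For the latter I would use the $G$-equivariant $\Line^1$-fibration $\AJ\colon\calF_{1,1}(X)\to(\CCH^2_{X/k})^{\gamma_1+\gamma_2}$ of Proposition \ref{CTT24 prop 5.6}: a $\tau$-fixed point of $\tilde P^{(1)}$ has a $\tau$-invariant fibre $\cong\Line^1$, which carries a $\tau$-fixed point, yielding a $\tau$-fixed stable map $f\colon R\to X$ of class $\gamma_1+\gamma_2$. By the classification in the proof of Proposition \ref{CTT24 prop 5.6}, either $f(R)=\ell_1\cup\ell_2$ with $\ell_i$ a line in a fibre of $\pi_i$, or $f(R)$ is an irreducible smooth rational curve. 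In the first case $\tau$ preserves both fibrations and the distinct classes $\gamma_1,\gamma_2$, so $\tau(\ell_2)=\ell_2$; as $\ell_2$ lies in a degenerate fibre of $\pi_2$ over a point of $\Delta$, the point of $\tilde\Delta$ labelling $\ell_2$ is $\tau$-fixed, contradicting (3). In the second case $\pi_1|_{f(R)}$ has degree $H_1\cdot(\gamma_1+\gamma_2)=1$, hence is an isomorphism onto $\Line^1$; since $\tau$ acts trivially on $\Line^1$ it fixes $f(R)$ pointwise, so $\pi_2|_{f(R)}$ is an isomorphism onto a line $\ell\subset\Line^2$ fixed pointwise by $\tau$, and as $\tau|_{\Line^2}$ is a nontrivial involution (hypothesis (2)) the line $\ell$ is its unique fixed line. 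Over each point $p$ of $\ell\cap\Delta$ — a smooth point of $\Delta$, so $\pi_2^{-1}(p)=\ell_p\cup\ell_p'$ is a reduced pair of lines — the curve $f(R)$ meets $\pi_2^{-1}(p)$ in one $\tau$-fixed point; unless this is the node, it lies on one of $\ell_p,\ell_p'$, which is then $\tau$-invariant, and the corresponding point of $\tilde\Delta$ is again $\tau$-fixed, contradicting (3). The finitely many residual configurations (where $f(R)$ meets these fibres only at nodes, or $\ell$ is tangent or bitangent to $\Delta$) are excluded exactly as in \cite[Proposition 5.9]{1}.

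I expect the main obstacle to be the irreducible case of the last paragraph, together with the residual configurations: one must rule out a $\tau$-fixed point of $\tilde P^{(1)}$ arising from an irreducible $\tau$-invariant curve of class $\gamma_1+\gamma_2$. Hypotheses (1) and (2) are exactly what drives this — (1) forces such a curve into the $\tau$-fixed locus and trivializes its projection to $\Line^1$, and (2) pins its image in $\Line^2$ down to the unique $\tau$-fixed line, making its incidences with the degenerate conic fibres over $\Delta$ $\tau$-equivariant — and (3) then supplies the contradiction. The remaining bookkeeping (matching the $G$-torsor structures of $\CCH^2_{X/k}$ and $\PPic_{C/k}$, and the degenerate subcases) follows \cite{1}.
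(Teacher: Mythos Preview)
Your overall architecture — contradiction via the equivariant IJT, producing a $\tau$-fixed point on $(\CCH^2_{X/k})^{\gamma_1+\gamma_2}$, lifting it through the $\Line^1$-fibration $\AJ$ to a $\tau$-fixed stable map, and splitting into reducible/irreducible cases — is exactly the paper's. Two points deserve comment.

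\textbf{Fixed point on $\tilde P^{(1)}$.} The paper reaches this more directly than you do: by Theorem~\ref{CTT24 thm 5.2} the Prym is the Jacobian of a \emph{connected} genus-$2$ curve $C$, and Riemann--Hurwitz applied to $C\to C/\langle\tau\rangle$ forces $C^{\tau}\neq\emptyset$ (the quotient has genus $\le 1$, so the cover cannot be \'etale). Translation by a multiple of a fixed point then gives a $\tau$-fixed point on any $\PPic^m_{C/k}$, hence on $(\CCH^2_{X/k})^{\gamma_1+\gamma_2}$. Your detour through the forward direction of Theorem~\ref{CTT24 Theorem 5.4} together with a $\tau$-fixed point on $P^{(1)}$ from a degenerate fibre of $\pi_1$ can be made to work, but it is longer and requires extra justification (existence and shape of the degenerate quadric fibre).

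\textbf{The irreducible case: a genuine gap.} Your endgame tries to intersect $R_1=f(R)$ with the degenerate $\pi_2$-fibres over $\ell\cap\Delta$ and conclude that some component $\ell_p$ is $\tau$-stable. This fails precisely when $R_1$ meets those fibres at their nodes, and your deferral of these ``residual configurations'' to \cite[Proposition~5.9]{1} is misplaced: neither the paper nor \cite{1} argues this way. Instead, after normalizing so that $\tau|_{\Line^2}$ is $[x{:}y{:}z]\mapsto[x{:}y{:}{-}z]$ and $\ell=\{z=0\}$, one sets $S:=\pi_2^{-1}(\ell)$, a double cover of $\Line^1\times\ell$. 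Since $\tau$ acts trivially on the base $\Line^1\times\ell$, the restriction $\tau|_S$ is either the identity or the deck involution of $S\to\Line^1\times\ell$. If $\tau|_S=\id$, then both components of every degenerate $\pi_2$-fibre over $\ell\cap\Delta$ are $\tau$-fixed, so $\tilde\Delta^{\tau}\neq\emptyset$, contradicting~(3). Hence $\tau|_S$ is the deck involution; the image $R'=\pi(R_1)$ pulls back to two curves $R_1$ and $R_2$ exchanged by $\tau$, contradicting that $R_1$ is pointwise $\tau$-fixed. This double-cover argument is what replaces your residual-case analysis, and it is the step you are missing.
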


\begin{proof}
We prove the claim following the method of \cite{1}. Let $Q\in k[x,y,z]$ be a homogeneous polynomial, which defines the discriminant curve $\Delta$. We may assume that the induced action on $\Line^2$ is given by
\[[x:y:z]\mapsto [x:y:-z],\]
and \[Q=z^4+L_2(x,y)z^2+L_4(x,y).\]
Here $L_2,L_4\in k[x,y]$ are homogeneous polynomials of degree $2$ and $4$, respectively. Because $\Delta$ is smooth, the equation $L_4=0$ has $4$ distinct roots.

Suppose that $X$ is $\Z/2\Z$-projectively linearizable. By Theorem \ref{IJT} and Theorem \ref{CTT24 thm 5.2}, there exists a smooth connected projective curve $C$ of genus $2$ with a regular $\Z/2\Z$-action such that we have a $\Z/2\Z$-equivariant isomorphism
\[(\CCH^2_{X/k})^{\gamma_1+\gamma_2}\cong\PPic^m_{C/k}, \]
for some $m$. 

The quotient map $C\to C/\langle\tau\>$ is a double cover. By Riemann-Hurwitz formula, we have the inequality 
\[2(g(C/\langle\tau\>)-1)\le g(C)-1=1.\]
Hence, we see that $g(C/\langle\tau\>)=0$ or $1$. Moreover, we see that the double cover $C\to C/\langle\tau\>$ is not \'etale. Therefore, the curve $C$ has a fixed point $P$.
Hence we get the $\Z/2\Z$-equivariant isomorphism
\[\PPic^0_{C/k}\to \PPic^m_{C/k}\]
given by $[D]\mapsto [mP+D]$. Because $\PPic^0_{C/k}$ has a fixed point, the $\Z/2\Z$-variety $(\CCH^2_{X/k})^{\gamma_1+\gamma_2}$ so does. By Proposition \ref{CTT24 prop 5.6}, the threefold $\calF_{1,1}(X)$ has a fixed point. 

Let $f:R\to X$ be a stable map which is a fixed point of $\calF_{1,1}(X)$. Then $f$ is $\langle\tau\>$-stable. 

If $R$ is reducible, the curve $R$ consists of two lines $L_1\cong \Line^1$ and $L_2\cong \Line^1$. Then one of $L_i$'s is isomorphically mapping to a line $\ell_1$ in a fiber of $\pi_1$ and the other $L_j$ to a line $\ell_2$ in a fiber of $\pi_2$ (see \cite[Proposition 5.6]{1}). We may assume that $f(L_i)=\ell_i$ for $i=1,2$. By the assumption (3), the curve $\tau(\ell_2)$ is a line, which is not equal to $\ell_2$, in a fiber of $\pi_2$. This contradicts the fact that $f$ is $\langle\tau\>$-stable.

Thus $R$ is irreducible and $R_1=f(R)$ is $\langle\tau\>$-stable. Then we get $R\cong \Line^1$, and $f:R\to X$ is an immersion whose image is of class $\gamma_1+\gamma_2$ (see \cite[Proposition 5.6]{1}). If $R_1$ is not fixed, we have a bisection of $\pi_1$ by the assumption (1). This contradicts the fact that the class of $R_1$ is $\gamma_1+\gamma_2$. We see that $R_1$ is fixed. Then the line $\pi_2(R_1)$ is fixed. Hence, we get $\pi_2(R_1)=\{z=0\}$. Since $L_4=0$ has distinct roots, the surface $S=\pi_2^{-1}(R_1)$ is a $G$-equivariant double cover of $\Line^1\times \ell$ branched over $4$ distinct lines.

Let $R'$ be the image of $R_1$ by the double cover $S\to \Line^1\times \ell$. Then we have a birational morphism $R_1\to R'$. Hence, the pull-back of $R'$ via the double cover consists of two curves $R_1$ and $R_2$. Since the double cover $S\to \Line^1\times \ell$ is $\langle\tau\>$-equivariant, we have $\tau(R_1)=R_2$. This contradicts the fact that $R_1$ is fixed.
\end{proof}

\begin{example}\label{nolinearaction}
Let $Q_1=\sqrt{-1}x^2+y^2, Q_2=z^2$ and $Q_3=\sqrt{-1}x^2-y^2$. 
Consider the automorphism
\[\tau:([t_0:t_1], [x:y:z],w)\mapsto \left(\left[\sqrt{-1}t_0:\frac{t_1}{\sqrt{-1}}\right], [\sqrt{-1}x:\sqrt{-1}y:z],w\right).\] Then $\langle \tau \rangle$ is a cyclic group of order $4$. The $\Z/4\Z$-action on $\Delta$, induced by $\pi_2$, has a fixed point, e.g. $[1:\zeta_8:0]$. Moreover, lines in a fiber above a $\Z/4\Z$-fixed point are $\Z/4\Z$-stable. Therefore $\langle \tau \rangle$ is linearizable. 

If $\sigma$ is the involution acting trivially on $\Line^1\times \Line^2$, the involution $\sigma \tau^2$ satisfies the assumptions in Proposition \ref{non-linearizable prop}. Hence the group generated by $\tau, \sigma$ is not projectively linearizable. 
\end{example}

G{\footnotesize RADUATE} S{\footnotesize CHOOL} {\footnotesize OF} M{\footnotesize ATHEMATICS}, N{\footnotesize AGOYA} U{\footnotesize NIVERSITY}, F{\footnotesize UROCHO} C{\footnotesize HIKUSAKU}, N{\footnotesize AGOYA}, 464-8602, J{\footnotesize APAN}\\
{\em Email address:} $\mathtt{shuto.abe.c6\text{@}math.nagoya\text{-}u.ac.jp}$

\end{document}